\date{\today}
\newtheorem{theorem}{Theorem}[section]
\newtheorem{proposition}[theorem]{Proposition}
\newtheorem{corollary}[theorem]{Corollary}
\newtheorem{lemma}[theorem]{Lemma}
\theoremstyle{definition}
\newtheorem{example}[theorem]{Example}
\newtheorem{remark}[theorem]{Remark}
\newtheorem{definition}[theorem]{Definition}
\begin{document}

\title[On feebly compact inverse primitive (semi)topological semigroups]{On feebly compact inverse primitive (semi)topological semigroups}

\author{Oleg~Gutik}
\address{Faculty of Mathematics, National University of Lviv,
Universytetska 1, Lviv, 79000, Ukraine}
\email{o\underline{\hskip5pt}\,gutik@franko.lviv.ua,
ovgutik@yahoo.com}

\author{Oleksandr~Ravsky}
\address{Pidstrygach Institute for Applied Problems of Mechanics and Mathematics of NASU, Naukova 3b, Lviv, 79060, Ukraine}
\email{oravsky@mail.ru}

\keywords{Semigroup, primitive inverse semigroup, Brandt $\lambda^0$-extension, topological semigroup, topological group, paratopological group, semitopological semigroup, semitopological group, topological Brandt $\lambda^0$-extension, Brandt semigroup, primitive inverse semigroup, pseudocompact space, feebly compact space, countably compact space, countably pracompact space, Stone-\v{C}ech compactification}

\subjclass[2010]{Primary 22A15, 20M18. Secondary 22A05, 22A26, 54A10, 54D35, 54H11}

\begin{abstract}
We study the structure of inverse primitive feebly compact semitopological and topological semigroups. We find conditions when the maximal subgroup of an inverse primitive feebly compact semitopological semigroup $S$ is a closed subset of $S$ and describe the topological structure of such semiregular semitopological semigroups. Later we describe the structure of feebly compact topological Brandt $\lambda^0$-extensions of topological semigroups and semiregular (quasi-regular) primitive inverse topological semigroups. In particular we show that inversion in a quasi-regular primitive inverse feebly compact topological semigroup is continuous. Also an analogue of Comfort--Ross Theorem is proved for such semigroups: a Tychonoff product of an arbitrary family of primitive inverse semiregular feebly compact semitopological semigroups with closed maximal subgroups is feebly compact. We describe the structure of the Stone-\v{C}ech compactification of a Hausdorff primitive inverse countably compact semitopological semigroup $S$ such that every maximal subgroup of $S$ is a topological group.
\end{abstract}

\maketitle

\section{Introduction and preliminaries}

Further we shall follow the terminology of \cite{CHK, CliffordPreston1961-1967, Engelking1989, Petrich1984, Ruppert1984}. By $\mathbb{N}$ we shall denote the set of all positive integers.

A semigroup is a non-empty set with a binary associative operation. A semigroup $S$ is called \emph{inverse} if for any $x\in S$ there exists a unique $y\in S$ such that $x\cdot y\cdot x=x$ and $y\cdot x\cdot y=y$. Such the element $y$ in $S$ is called
\emph{inverse} to $x$ and is denoted by $x^{-1}$. The map assigning to each element $x$ of an inverse semigroup $S$ its inverse $x^{-1}$ is called the \emph{inversion}.

For a semigroup $S$ by $E(S)$ we denote the subset of idempotents of $S$, and by $S^1$ (resp., $S^0$) we denote the semigroup $S$ with the adjoined unit (resp., zero) (see \cite[Section~1.1]{CliffordPreston1961-1967}). Also if a
semigroup $S$ has zero $0_S$, then for any $A\subseteq S$ we
denote $A^*=A\setminus\{ 0_S\}$.

For  a semilattice $E$  the semilattice operation on $E$ determines the partial order $\leqslant$ on $E$: $$e\leqslant f\quad\text{if and only if}\quad ef=fe=e.$$ This order is called {\em natural}. An element $e$ of a partially ordered set $X$ is
called {\em minimal} if $f\leqslant e$  implies $f=e$ for $f\in X$. An idempotent $e$ of a semigroup $S$ without zero (with zero) is called \emph{primitive} if $e$ is a minimal element in $E(S)$ (in $(E(S))^*$).

Let $S$ be a semigroup with zero and $\lambda\geqslant 1$ be a cardinal. On the set
$B_{\lambda}(S)=\left(\lambda\times S\times\lambda\right)\sqcup\{ 0\}$ we define a  semigroup operation as follows
 $$
 (\alpha,a,\beta)\cdot(\gamma, b, \delta)=
  \begin{cases}
    (\alpha, ab, \delta), & \text{ if } \beta=\gamma; \\
    0, & \text{ if } \beta\ne \gamma,
  \end{cases}
 $$
and $(\alpha, a, \beta)\cdot 0=0\cdot(\alpha, a, \beta)=0\cdot 0=0$, for all $\alpha, \beta, \gamma, \delta\in \lambda$ and $a, b\in S$. If $S$ is a monoid, then the semigroup $B_\lambda(S)$ is called the {\it Brandt $\lambda$-extension of the semigroup}
$S$~\cite{Gutik1999}. Obviously, ${\mathcal J}=\{ 0\}\cup\{(\alpha, {\mathscr O}, \beta)\colon {\mathscr O}$ is the zero of $S\}$ is an ideal of $B_\lambda(S)$. We put $B^0_\lambda(S)=B_\lambda(S)/{\mathcal J}$ and we shall call $B^0_\lambda(S)$ the {\it Brandt $\lambda^0$-extension of the semigroup $S$ with zero}~\cite{GutikPavlyk2006}. Further, if $A\subseteq S$ then we shall denote $A_{\alpha,\beta}=\{(\alpha, s, \beta)\colon s\in A \}$ if $A$ does not contain zero, and $A_{\alpha,\beta}=\{(\alpha, s, \beta)\colon s\in A\setminus\{ 0\} \}\cup \{ 0\}$ if $0\in A$, for $\alpha, \beta\in {\lambda}$. If $\mathcal{I}$ is a trivial  semigroup (i.e., $\mathcal{I}$ contains only one element), then by ${\mathcal{I}}^0$ we denote the semigroup $\mathcal{I}$ with the adjoined zero. Obviously, for any
$\lambda\geqslant 2$ the Brandt $\lambda^0$-extension of the semigroup ${\mathcal{I}}^0$ is isomorphic to the semigroup of $\lambda\times\lambda$-matrix units and any Brandt $\lambda^0$-extension of a semigroup with zero contains the
semigroup of $\lambda\times\lambda$-matrix units. Further by $B_\lambda$ we shall denote the semigroup of $\lambda\times\lambda$-matrix units and by $B^0_\lambda(1)$  the subsemigroup of $\lambda\times\lambda$-matrix units of the Brandt $\lambda^0$-extension of a monoid $S$ with zero.

A semigroup $S$ with zero is called \emph{$0$-simple} if $\{0\}$ and $S$ are its only ideals and $S\cdot S\neq\{0\}$, and \emph{completely $0$-simple} if it is $0$-simple and has a primitive idempotent~\cite{CliffordPreston1961-1967}. A completely $0$-simple inverse semigroup is called a \emph{Brandt semigroup}~\cite{Petrich1984}. By  Theorem~II.3.5~\cite{Petrich1984}, a semigroup $S$ is a Brandt semigroup if and only if $S$ is isomorphic to a Brandt $\lambda$-extension $B_\lambda(G)$ of a group  $G$.

Let $\left\{S_{\iota}\colon \iota\in\mathscr{I}\right\}$ be a disjoint family of semigroups with zero such that $0_\iota$ is zero in $S_{\iota}$ for any $\iota\in\mathscr{I}$. We put $S=\{0\}\cup\bigcup\left\{S_{\iota}^*\colon \iota\in\mathscr{I}\right\}$, where $0\notin\bigcup\left\{S_{\iota}^*\colon \iota\in\mathscr{I}\right\}$, and define a semigroup  operation ``\;$\cdot$\;'' on $S$ in the following way
\begin{equation*}
    s\cdot t=
\left\{
  \begin{array}{cl}
    st, & \hbox{if } st\in S_{\iota}^* \hbox{ for some } \iota\in\mathscr{I};\\
    0, & \hbox{otherwise}.
  \end{array}
\right.
\end{equation*}
The semigroup $S$ with the operation ``\;$\cdot$\;'' is called an \emph{orthogonal sum} of the semigroups $\left\{S_{\iota}\colon \iota\in\mathscr{I}\right\}$ and in this case we shall write $S=\sum_{\iota\in\mathscr{I}}S_{\iota}$.

A non-trivial inverse semigroup is called a \emph{primitive inverse semigroup} if all its non-zero idempotents are primitive~\cite{Petrich1984}. A semigroup $S$ is a primitive inverse semigroup if and only if $S$ is an orthogonal sum of Brandt semigroups~\cite[Theorem~II.4.3]{Petrich1984}. We shall call a Brandt subsemigroup $T$ of a primitive inverse semigroup $S$ \emph{maximal} if every Brandt subsemigroup of $S$ which contains $T$, coincides with $T$.

In this paper all topological spaces are Hausdorff. If $Y$ is a subspace of a topological space $X$ and $A\subseteq Y$, then by $\operatorname{cl}_Y(A)$ and $\operatorname{int}_Y(A)$ we denote the topological closure and interior of $A$ in $Y$, respectively.

A subset $A$ of a topological space $X$ is called \emph{regular open} if $\operatorname{int}_X(\operatorname{cl}_X(A))=A$.

We recall that a topological space $X$ is said to be
\begin{itemize}
  \item \emph{semiregular} if $X$ has a base consisting of regular open subsets;
  \item \emph{quasiregular} if for any non-empty open set $U\subset X$ there exists a non-empty open set $V\subset U$ such that $\operatorname{cl}_X(V) \subseteq U$;
  \item \emph{compact} if each open cover of $X$ has a finite subcover;
  \item \emph{sequentially compact} if each sequence $\{x_i\}_{i\in\mathbb{N}}$ of $X$ has a convergent subsequence in $X$;
  \item \emph{countably compact} if each open countable cover of $X$ has a finite subcover;
  \item \emph{countably compact at a subset} $A\subseteq X$ if every infinite subset $B\subseteq A$  has  an  accumulation  point $x$ in $X$;
  \item \emph{countably pracompact} if there exists a dense subset $A$ in $X$  such that $X$ is countably compact at $A$;
  \item \emph{feebly compact} if each locally finite open cover of $X$ is finite;
  \item \emph{pseudocompact} if $X$ is Tychonoff and each continuous real-valued function on $X$ is bounded;
  \item $k$-\emph{space} if a subset $F\subset X$ is closed in $X$ if and only if $F\cap K$ is closed in $K$ for every compact subspace $K\subseteq X$.
\end{itemize}
According to Theorem~3.10.22 of \cite{Engelking1989}, a Tychonoff topological space $X$ is feebly compact if and only if $X$ is Pseudocompact. Also, a Hausdorff topological space $X$ is feebly compact if and only if every locally finite family of non-empty open subsets of $X$ is finite. Every compact space and every sequentially compact space are countably compact, every countably compact space is countably pracompact, and every countably pracompact space is feebly compact (see \cite{Arkhangelskii1992}).

We recall that the Stone-\v{C}ech compactification of a Tychonoff space $X$ is a
compact Hausdorff space $\beta X$ containing $X$ as a dense subspace so that each continuous map $f\colon X\rightarrow Y$ to a compact Hausdorff space $Y$ extends to a continuous map $\overline{f}\colon \beta X\rightarrow Y$ \cite{Engelking1989}.

A ({\it semi})\emph{topological semigroup} is a Hausdorff topological space with a (separately) continuous semigroup operation. A topological semigroup which is an inverse semigroup is called an \emph{inverse topological semigroup}. A \emph{topological inverse semigroup} is an inverse topological semigroup with continuous inversion. We observe that the inversion on a topological inverse semigroup is a homeomorphism (see \cite[Proposition~II.1]{EberhartSelden1969}). A Hausdorff topology $\tau$ on a (inverse) semigroup $S$ is called (\emph{inverse}) \emph{semigroup} if $(S,\tau)$ is a topological (inverse) semigroup. A {\it paratopological} (\emph{semitopological}) \emph{group} is a Hausdorff topological space with a jointly (separately) continuous group operation. A paratopological group with continuous inversion is a \emph{topological group}.

Let $\mathfrak{STSG}_0$ be a class of semitopological semigroups.

\begin{definition}[\cite{Gutik1999}]\label{def1}
Let $\lambda\geqslant 1$ be a cardinal and $(S,\tau)\in\mathfrak{STSG}_0$ be a semitopological monoid with zero. Let $\tau_{B}$ be a topology on $B_{\lambda}(S)$ such that
\begin{itemize}
    \item[a)] $\left(B_{\lambda}(S),\tau_{B}\right)\in \mathfrak{STSG}_0$; \; and
    \item[b)] for some $\alpha\in{\lambda}$ the topological subspace $(S_{\alpha,\alpha},\tau_{B}|_{S_{\alpha,\alpha}})$ is naturally homeomorphic to $(S,\tau)$.
\end{itemize}
Then $\left(B_{\lambda}(S), \tau_{B}\right)$ is called a {\it topological Brandt $\lambda$-extension of $(S, \tau)$ in $\mathfrak{STSG}_0$}.
\end{definition}

\begin{definition}[\cite{GutikPavlyk2006}]\label{def2}
Let $\lambda\geqslant 1$ be a cardinal and $(S,\tau)\in\mathfrak{STSG}_0$. Let $\tau_{B}$ be a topology on $B^0_{\lambda}(S)$ such that
\begin{itemize}
  \item[a)] $\left(B^0_{\lambda}(S),            \tau_{B}\right)\in\mathfrak{STSG}_0$;
  \item[b)] the topological subspace $(S_{\alpha,\alpha},\tau_{B}|_{S_{\alpha,\alpha}})$ is naturally homeomorphic to $(S,\tau)$ for some $\alpha\in{\lambda}$.
\end{itemize}
Then $\left(B^0_{\lambda}(S), \tau_{B}\right)$ is called a {\it topological Brandt $\lambda^0$-extension of $(S, \tau)$ in $\mathfrak{STSG}_0$}.
\end{definition}

Later, if $\mathfrak{STSG}_0$ coincides with the class of all semitopological semigroups we shall say that $\left(B^0_{\lambda}(S), \tau_{B}\right)$ (resp.,  $\left(B_{\lambda}(S), \tau_{B}\right)$) is a {\it topological Brandt $\lambda^0$-extension} (resp., a \emph{topological Brandt $\lambda$-extension}) of $(S, \tau)$.

Algebraic properties of Brandt $\lambda^0$-extensions of monoids with zero, non-trivial homomorphisms between them, and a category whose objects are ingredients of the construction of such extensions were described in \cite{GutikRepovs2010}. Also, in  \cite{GutikPavlykReiter2009} and \cite{GutikRepovs2010} a category whose objects are ingredients in the constructions of finite (resp., compact, countably compact) topological Brandt $\lambda^0$-extensions of topological monoids with zeros were described.

Gutik and Repov\v{s} proved that any $0$-simple countably compact topological inverse semigroup is topologically isomorphic to a topological Brandt $\lambda$-extension $B_{\lambda}(H)$ of a countably compact topological group $H$ in the class of all topological inverse semigroups for some finite cardinal $\lambda\geqslant 1$ \cite{GutikRepovs2007}.  Also, every $0$-simple feebly compact topological inverse semigroup is topologically isomorphic to a topological Brandt $\lambda$-extension $B_{\lambda}(H)$ of a feebly compact topological group $H$ in the class of all topological inverse semigroups for some finite cardinal $\lambda\geqslant 1$ \cite{GutikPavlykReiter2011}. Next Gutik and Repov\v{s} showed in \cite{GutikRepovs2007} that the Stone-\v{C}ech compactification $\beta(T)$ of a $0$-simple countably compact topological inverse semigroup $T$ has a natural structure of a $0$-simple compact topological inverse semigroup. It was proved in \cite{GutikPavlykReiter2011} that the same is true for $0$-simple feebly compact topological inverse semigroups.

In the paper \cite{BerezovskiGutikPavlyk2010} the structure of compact and countably compact primitive topological inverse semigroups was described and was shown that any countably compact primitive topological inverse semigroup embeds into a compact primitive topological inverse semigroup.

Comfort and Ross in \cite{ComfortRoss1966} proved that a Tychonoff product of an arbitrary family of pseudocompact topological groups is a pseudocompact topological group. Also, they proved there that the Stone-\v{C}ech compactification of a pseudocompact topological group has a natural structure of a compact topological group. Ravsky in \cite{Ravsky-arxiv1003.5343v5} generalized Comfort--Ross Theorem and proved that a Tychonoff product of an arbitrary non-empty family of feebly compact paratopological groups is feebly compact.

In the paper \cite{GutikPavlyk2013??} the structure of feebly compact primitive topological inverse semigroups is described and it is shown that the Tychonoff product of an arbitrary non-empty family of feebly compact primitive topological inverse semigroups is feebly compact. Also, it is proved that the Stone-\v{C}ech compactification of a feebly compact primitive topological inverse semigroup has a natural structure of a compact primitive topological inverse semigroup.

In this paper we study the structure of inverse primitive feebly compact semitopological and topological semigroups. We find conditions when a maximal subgroup of an inverse primitive feebly compact semitopological semigroup $S$ is a closed subset of $S$ and describe the topological structure of such semiregular semigroup. Later we describe the structure of feebly compact topological Brandt $\lambda^0$-extensions of topological semigroups and semiregular (quasi-regular) primitive inverse topological semigroups. In particular we show that the inversion in a quasi-regular primitive inverse feebly compact topological semigroup is continuous. Also an analogue of Comfort--Ross Theorem is proved for such semigroups: the Tychonoff product of an arbitrary family of primitive inverse semiregular feebly compact semitopological semigroups with closed maximal subgroups is a feebly compact space. We describe the structure of the Stone-\v{C}ech compactification of a Tychonoff primitive inverse countably compact semitopological semigroup $S$ such that every maximal subgroup of $S$ is a topological group.

\section{An adjunction of zero to a compact like semitopological group}

Given a topological space $(X,\tau)$ Stone \cite{Sto} and Kat\u{e}tov \cite{Kat} consider the topology $\tau_r$ on $X$ generated by the base consisting of all regular open sets of the space $(X,\tau)$. This topology is called the {\it semiregularization} of the topology $\tau$. If $(X,\tau)$ is a paratopological group then $(X,\tau_r)$ is a $T_3$ paratopological group \cite[Ex. 1.9]{Rav2}, \cite[p. 31]{Rav3}, and \cite[p. 28]{Rav3}.

\begin{lemma}[{\cite[Theorem 1.7]{ArkRez}}]\label{ArkRezReg}
Each paratopological group that is a dense $G_\delta$-subset of a regular feebly compact space is a topological group.
\end{lemma}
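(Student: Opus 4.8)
The plan is to show that the inversion $\mathrm{inv}\colon x\mapsto x^{-1}$ of the paratopological group $G$ is continuous; together with the already jointly continuous multiplication this makes $G$ a topological group. I would first reduce the problem to continuity of $\mathrm{inv}$ at a single point. In any paratopological group the left and right translations $L_a\colon x\mapsto ax$ and $R_a\colon x\mapsto xa$ are homeomorphisms, and from $(ab)^{-1}=b^{-1}a^{-1}$, taking $b=a^{-1}y$, one obtains
\[
y^{-1}=(a^{-1}y)^{-1}a^{-1},\qquad\text{that is,}\qquad \mathrm{inv}=R_{a^{-1}}\circ\mathrm{inv}\circ L_{a^{-1}}.
\]
Hence if $\mathrm{inv}$ is continuous at one point $c$, then for an arbitrary $b$ the choice $a=bc^{-1}$ (so that $a^{-1}b=c$) yields continuity of $\mathrm{inv}$ at $b$. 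Thus it suffices to prove that $\mathrm{inv}$ is continuous at the identity $e$.

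Next I would record that, by the criterion quoted in the introduction, the regular pseudocompact space $X$ is \emph{feebly compact}: every locally finite family of non-empty open subsets of $X$ is finite. Equivalently, every sequence $(V_n)_{n\in\mathbb{N}}$ of non-empty open subsets of $X$ has a cluster point, i.e.\ a point $z\in X$ every neighbourhood of which meets $V_n$ for infinitely many $n$. This is the feature of the ambient space that supplies the missing ``compactness''. I would emphasize here that the Baire property of $G$ alone — which does hold, since $G$ is a dense $G_\delta$-subset of the Baire space $X$ — is \emph{not} sufficient for the conclusion, as the Sorgenfrey line is a Baire paratopological group that is not a topological group; so the pseudocompactness of $X$ must genuinely enter the argument.

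Finally, I would argue by contradiction. Assume $\mathrm{inv}$ is discontinuous at $e$; then there is an open neighbourhood $W$ of $e$ such that $W^{-1}$ is not a neighbourhood of $e$, so for every neighbourhood $U$ of $e$ there is $x\in U$ with $x^{-1}\notin W$. Writing $G=\bigcap_{n\in\mathbb{N}}O_n$ with each $O_n$ open in $X$ and using regularity, I would extract from these witnesses a countable family of non-empty open sets $V_n\subseteq O_n$ of $X$, with $\operatorname{cl}_X(V_n)\cap W=\varnothing$ and each $V_n$ containing a point of the form $x^{-1}$ with $x$ arbitrarily close to $e$; feeble compactness then furnishes a cluster point $z\in X$ of $(V_n)$. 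The heart of the proof — and the step I expect to be the main obstacle — is to carry out this extraction so that $z$ actually lies in $G$: this is precisely where the $G_\delta$-density of $G$ is indispensable (a merely dense subgroup would not do), and it requires choosing the $V_n$ so that the tail unions $\bigcup_{k\geq n}V_k$ have their $X$-closures contained in $O_n$, forcing $z\in\bigcap_nO_n=G$. Controlling the accumulation in this way, together with the passage from the net of witnesses to a countable subfamily compatible with the sequential form of feeble compactness, is the delicate part of the construction. Once $z\in G$ is secured, $z$ is a cluster point of the $x^{-1}$ while the corresponding $x$ tend to $e$; joint continuity of the multiplication gives that the products $x\cdot x^{-1}$ cluster at $e\cdot z=z$, yet $x\cdot x^{-1}=e$ throughout, so Hausdorffness forces $z=e$. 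This contradicts the fact that, by construction, every neighbourhood of $z$ meets some $V_n$ with $\operatorname{cl}_X(V_n)\cap W=\varnothing$, whence $z\notin W\ni e$ and $z\neq e$. Therefore $\mathrm{inv}$ is continuous at $e$, and by the reduction above $G$ is a topological group.
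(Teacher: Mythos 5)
You should first be aware that the paper contains no proof of this lemma: it is imported verbatim as Theorem~1.7 of Arkhangel'skii and Reznichenko \cite{ArkRez}, so there is no internal argument to compare yours against, and your attempt must stand on its own. Your preliminary reductions are correct (continuity of inversion at $e$ suffices; regular pseudocompact spaces are feebly compact; Baireness alone cannot suffice, by the Sorgenfrey line). The core of the argument, however, has a genuine gap that is not a deferred technicality but a point where the plan breaks. Feeble compactness of $X$ produces a point $z$ every neighbourhood of which meets infinitely many of the open sets $V_n$; it does \emph{not} produce an accumulation point of a sequence of \emph{points} chosen from the $V_n$ --- that would be countable compactness, which you do not have. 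Your concluding step nevertheless treats $z$ as a cluster point of the witness points $x^{-1}$: a neighbourhood $B$ of $z$ meets some $V_n$, but the points of $B\cap V_n$ need not be the witness $x_n^{-1}$, nor the inverse of anything close to $e$, so the joint-continuity computation $x\cdot x^{-1}\in A\cdot B$ never gets started. This is compounded by the absence of first countability: the witnesses form a net indexed by \emph{all} neighbourhoods of $e$, and for any countable decreasing family $U_1\supseteq U_2\supseteq\cdots$ the chosen $x_n\in U_n$ need not enter a given neighbourhood $A$ of $e$ at all, since $(U_n)$ is not a base at $e$; hence ``$x_n\in A$ frequently,'' which the multiplication step needs for every $A\ni e$, is unavailable.

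The second deferred step is equally unrecoverable in the form you propose. The condition $\operatorname{cl}_X\bigl(\bigcup_{k\geq n}V_k\bigr)\subseteq O_n$ cannot be arranged by choosing the $V_k$ one at a time using regularity: regularity gives $\operatorname{cl}_X(V_k)\subseteq O_k$, but the closure of an infinite union of non-nested open sets exceeds the union of their closures exactly by the cluster points of the tail --- the very points you are trying to force into $G$ --- so the requirement is circular. It also genuinely fails without the group structure: in $X=[0,1]$ with $G$ the irrationals and $O_n$ the complement of the first $n$ rationals, one can choose open $V_k$ with $\operatorname{cl}_X(V_k)\subseteq O_k$ which cluster precisely at a rational point. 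The $G_\delta$ hypothesis is exploited in arguments of this kind through \emph{decreasing} chains of open sets with $\operatorname{cl}_X(A_{n+1})\subseteq A_n\cap O_n$, for which $\varnothing\neq\bigcap_n\operatorname{cl}_X(A_n)\subseteq G$ by feeble compactness; your witness sets are spread out, not nested, and bridging that mismatch (necessarily using the group operation, e.g.\ via translates, and avoiding any appeal to sequences converging to $e$) is the actual content of the cited theorem. As written, your proposal assembles the right hypotheses but the two steps you flag as ``delicate'' are where the proof would have to live, and the strategy you outline for them cannot be carried out.
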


We recall that a group $G$ endowed with a topology is \emph{left} (resp. \emph{right}) ($\omega$-)\emph{precompact}, if for each neighborhood $U$ of the unit of $G$ there exists a (countable) finite subset $F$ of $G$ such that $FU=G$ (resp. $UF=G$). It is easy to check (see, for instance, \cite[Proposition~3.1]{Rav2} or \cite[Proposition~2.1]{Rav2}) that a paratopological group $G$ is left precompact if and only if $G$ is right precompact, so we shall call left precompact paratopological groups as precompact. Moreover, it is well known \cite{AT} that a Hausdorff topological group $G$ is precompact if and only if $G$ is a subgroup of a compact topological group. Theorem 1 from \cite{BGR} implies the following:

\begin{lemma}\label{TBGr}
A Hausdorff topological group $G$ is precompact if and only if
for any neighborhood $W$ of the unit of the group $G$ there exists a finite set
$F\subset G$ such that $G=FWF$.
\end{lemma}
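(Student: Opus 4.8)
The plan is to prove the two implications separately; the forward one is a formality, and all the work sits in the converse.

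\emph{Forward implication.} Suppose $G$ is precompact and let $W$ be a neighbourhood of the unit $e$. By definition there is a finite set $F_0\subseteq G$ with $F_0W=G$. Putting $F=F_0\cup\{e\}$ we get $G=F_0W\subseteq FWF\subseteq G$, so $G=FWF$, as required.

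\emph{Converse.} First I would recast precompactness via separated sets. For a symmetric open neighbourhood $V$ of $e$, call $S\subseteq G$ \emph{$V$-separated} if $s^{-1}s'\notin V$ for all distinct $s,s'\in S$. A maximal $V$-separated set $S$ exists by Zorn's Lemma, and maximality together with $V=V^{-1}$ forces $G=SV$: if $g\notin SV$ then $s^{-1}g\notin V$ and, by symmetry, $g^{-1}s\notin V$ for every $s\in S$, so $S\cup\{g\}$ would still be $V$-separated. Hence, to prove precompactness it suffices to show that, under the hypothesis, no symmetric neighbourhood $V$ admits an infinite $V$-separated set. So I would assume, towards a contradiction, that for some symmetric $V$ there is an infinite $V$-separated set $\{s_k\colon k\in\mathbb{N}\}$.

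The natural attack runs as follows. Apply the hypothesis to a symmetric neighbourhood $W$ (to be specified) to obtain a finite $F=\{a_1,\dots,a_n\}$ with $G=\bigcup_{i,j}a_iWa_j$. By the pigeonhole principle one block $a_iWa_j$ contains two distinct points $s=a_iwa_j$ and $s'=a_iw'a_j$ of the separated set, whence $s^{-1}s'=a_j^{-1}(w^{-1}w')a_j\in a_j^{-1}W^{-1}Wa_j$. A contradiction with $s^{-1}s'\notin V$ would follow if $a_j^{-1}W^{-1}Wa_j\subseteq V$ for every $j$; and since each $a_jVa_j^{-1}$ is a neighbourhood of $e$, such a $W$ certainly exists once $a_1,\dots,a_n$ are fixed. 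This is exactly where the \textbf{main obstacle} lies: the set $F$ is delivered by the hypothesis only \emph{after} $W$ has been chosen, so selecting $W$ small enough to absorb conjugation by the elements of $F$ is circular. The maps $x\mapsto a^{-1}xa$ can enlarge a neighbourhood arbitrarily, with no uniform control over all of $G$, so the naive choice of $W$ cannot be made.

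Overcoming this circular dependence between $W$ and its conjugating set is precisely the content of Theorem~1 of \cite{BGR}, which characterises precompactness of a Hausdorff topological group through the two-sided covering condition $G=FWF$; the present statement is its direct consequence, and I would finish by quoting it. (The complementary fact that a precompact Hausdorff group is a subgroup of a compact group \cite{AT} is of no help for this direction, since constructing such a compact overgroup already presupposes the precompactness we are trying to establish.)
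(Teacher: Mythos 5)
Your proposal is correct and takes essentially the same route as the paper: the paper offers no self-contained argument either, stating the lemma as a direct consequence of Theorem~1 of \cite{BGR}, which is exactly the citation you invoke for the nontrivial implication. Your additions (the easy forward direction and the accurate diagnosis of why the naive pigeonhole argument is circular, since $F$ is produced only after $W$ is chosen) are sound, but the substance in both cases rests on the same external result.
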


\begin{lemma}\label{lemma-2.1}
Let $S$ be a Hausdorff left topological semigroup, $0$ be a right zero of the semigroup $S$ and $G=S\setminus\{0\}$ be a subgroup of the semigroup $S$. Then $0$ is an isolated point of the semigroup $S$ provided one of the following conditions holds:
\begin{itemize}
  \item[(1)] the group $G$ is left precompact;
  \item[(2)] the group $G$ is a feebly compact paratopological group;
  \item[(3)] the group $G$ is left $\omega$-precompact and feebly compact;
  \item[(4)] $S$ is a feebly compact topological semigroup;
  \item[(5)] $S$ is a topological semigroup and for each neighbourhood $U\subset G$ of the unit of the group $G$ there exists a finite subset $F$ of the group $G$ such that $G=FU^{-1}U$.
\end{itemize}
\end{lemma}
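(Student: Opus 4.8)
The plan is to reduce everything to a single geometric observation about how the right zero sits in $S$. Since $S$ is Hausdorff, the point $0$ is closed, so $G=S\setminus\{0\}$ is open; hence ``$0$ is isolated'' is equivalent to ``$G$ is closed'', i.e. to $0\notin\operatorname{cl}_S(G)$. I would therefore assume, toward a contradiction, that $0\in\operatorname{cl}_S(G)$. The basic tool is that for every $g\in G$ the left translation $\lambda_g\colon x\mapsto gx$ is a homeomorphism of $S$ fixing $0$: it is continuous because $S$ is left topological, it maps $G$ onto $G$ because $G$ is a subgroup, it fixes $0$ because $0$ is a right zero ($g\cdot 0=0$), and its inverse is $\lambda_{g^{-1}}$. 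From this I extract the key reduction: whenever a neighbourhood $V$ of the unit $e$ of $G$ satisfies $G=FV$ for a \emph{finite} set $F\subseteq G$, then $0\in\operatorname{cl}_S(G)=\bigcup_{f\in F}\operatorname{cl}_S(fV)=\bigcup_{f\in F}\lambda_f(\operatorname{cl}_S(V))$, so $0\in\lambda_f(\operatorname{cl}_S(V))$ for some $f$, and applying $\lambda_{f^{-1}}$ (again using $f^{-1}\cdot 0=0$) gives $0\in\operatorname{cl}_S(V)$.

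With this reduction in hand, case (1) is immediate. Left precompactness provides, for \emph{every} neighbourhood $U$ of $e$, a finite $F$ with $G=FU$, so $0\in\operatorname{cl}_S(U)$ for every such $U$. Indexing by pairs $(U,W)$, where $U$ runs over neighbourhoods of $e$ and $W$ over neighbourhoods of $0$, and choosing $x_{U,W}\in U\cap W$ (nonempty since $0\in\operatorname{cl}_S(U)$), I obtain a net that converges simultaneously to $e$ and to $0$, contradicting Hausdorffness. Case (5) uses the same reduction applied to the neighbourhood $U^{-1}U$ of $e$ (which is a neighbourhood of $e$ because each $u^{-1}U$ is open and contains $e$), yielding $0\in\operatorname{cl}_S(U^{-1}U)$ for every $U$; here the extra joint continuity furnished by ``topological semigroup'' finishes the argument. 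Separate $e$ and $0$ by disjoint open $A\ni e$, $B\ni 0$; continuity of the multiplication at $(e,0)\mapsto 0\in B$ gives open $U_1\ni e$ and $O\ni 0$ with $U_1O\subseteq B$; put $U=U_1\cap A$. Choosing a net $a_\gamma^{-1}b_\gamma\to 0$ in $U^{-1}U$ with $a_\gamma,b_\gamma\in U$, eventually $a_\gamma^{-1}b_\gamma\in O$, whence $b_\gamma=a_\gamma(a_\gamma^{-1}b_\gamma)\in U\!\cdot\!O\subseteq B$; but $b_\gamma\in U\subseteq A$, contradicting $A\cap B=\varnothing$.

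Cases (2) and (3) I would reduce to case (1) by proving that the group $G$ is precompact. For (3), $\omega$-precompactness yields a countable $F=\{f_n\}$ with $\bigcup_n f_nU=G$; using the pseudocompactness of $G$ together with a neighbourhood $V$ of $e$ with $VV\subseteq U$, a failure of finiteness would produce an infinite locally finite family of open subsets of $G$, contradicting pseudocompactness, so some finite subfamily already covers $G$. For (2), the statement ``a pseudocompact paratopological group is precompact'' (the paratopological Comfort--Ross phenomenon, accessible through the regularization $\tau_r$, Lemma~\ref{ArkRezReg} and Lemma~\ref{TBGr}) supplies precompactness directly; note that in (2) only the left-topological structure of $S$ is available, which is exactly what case (1) requires. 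Case (4) I would route through case (5): an internal Comfort--Ross construction, using the pseudocompactness of $S$, should establish the covering condition $G=FU^{-1}U$ for every neighbourhood $U$ of $e$, after which the argument of case (5) applies verbatim.

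The main obstacle lies precisely in the pseudocompactness cases, and especially in case (4). In (2) the difficulty is the usual paratopological one: inversion need not be continuous, so passing from pseudocompactness to precompactness cannot imitate the topological-group proof literally and must exploit the regularization results quoted above. In (4) the delicate point is local finiteness: the translate family $\{g_nU\}$ built in the Comfort--Ross construction is easily arranged to be locally finite inside $G$, but to contradict the pseudocompactness of $S$ one must ensure it is locally finite \emph{at the right zero $0$} as well, i.e. that it does not accumulate at $0$. Controlling the family near $0$ is where the right-zero identity $g\cdot 0=0$ together with the joint continuity of the multiplication has to be used carefully, and I expect this to be the technical heart of the proof.
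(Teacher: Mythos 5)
Your translation--closure reduction ($\lambda_g$ is a homeomorphism of $S$ fixing $0$, so $G=FV$ with $F$ finite forces $0\in\operatorname{cl}_S(V)$) is sound, and it is exactly the closure-language version of the paper's filter $\mathcal F=\{U\cap G\colon U \hbox{ a neighbourhood of } 0\}$; your cases (1) and (5) are correct and essentially coincide with the paper's. The gaps are in (2), (3) and (4). In case (2) you assert that a pseudocompact paratopological group is left precompact and that the regularization lemmas supply this ``directly''. What Lemma~\ref{ArkRezReg} plus Comfort--Ross actually give is precompactness of the \emph{regularization} $G_r$, i.e.\ of the coarser topology $\tau_r$: finitely many translates of each regular open set (equivalently, of $\operatorname{cl}_G(W)$) cover $G$. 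Since $\tau_r$-neighbourhoods are \emph{larger} than $\tau$-neighbourhoods, this does not yield $G=FU$ for an arbitrary $\tau$-neighbourhood $U$, and passing from ``translates of closures of neighbourhoods cover'' to ``translates of neighbourhoods cover'' is precisely the step that fails in paratopological groups (it needs continuity of inversion); Lemma~\ref{TBGr} cannot bridge it either, since it concerns topological groups and $G$ is not known to be one. The paper never claims $G$ is precompact: it covers $G$ by $F\cdot\operatorname{cl}_G(W)$ and chooses $W$ from a Hausdorff separation of $e$ and $0$ so that the filter at $0$ misses $\operatorname{cl}_G(W)$. (Your framework is repairable in the same spirit: apply your reduction to the regular open set $V=\operatorname{int}_G(\operatorname{cl}_G(U))$ and use $\operatorname{cl}_S(V)\subseteq\operatorname{cl}_S(U)$; but as written the step ``$G$ is precompact, now apply (1)'' is unjustified.)

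In case (3) you pick $V$ with $VV\subseteq U$; but there $S$ is merely a left topological semigroup, so multiplication on $G$ is not jointly continuous and no such $V$ is available, and your claim that failure of a finite subcover ``would produce an infinite locally finite family'' is exactly the classical topological-group argument whose local-finiteness step requires inversion --- you do not say how to obtain it without that. The paper's proof of (3) is structurally different: it keeps the filter at $0$, forms the sets $V_n=G\setminus C_nW$ (members of the filter, hence with nonempty interior \emph{because} $0$ is non-isolated), and applies feeble compactness of $G$ to the decreasing sequence $\operatorname{int}_G(V_n)$; so (3) is not an internal precompactness statement about $G$ at all. Finally, case (4) is not proved: you only express the hope that an ``internal Comfort--Ross construction should establish'' $G=FU^{-1}U$, and you yourself flag this as the technical heart. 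In the paper this case is the bulk of the proof: when $S$ is regular, Lemma~\ref{ArkRezReg} is applied to $G$ as a dense $G_\delta$-subset of the regular pseudocompact space $S$ (not to $G$ alone, which, being an open subspace, need not be pseudocompact) to make $G$ a topological group; then Lemma~\ref{TBGr} and a delicate maximal-separated-set covering argument exploit pseudocompactness of $S$; when $S$ is not regular, one proves that the regularization $S_r$ is a regular topological semigroup and reduces to the previous case. None of these ideas appears in your sketch, so (2), (3) and (4) remain genuinely open in your proposal.
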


\begin{proof}
Assume the contrary. Put $\mathcal F=\{U\cap G\colon U\subset S$ is a neighbourhood of the point $0\}$. Since $0$ is a non-isolated point of the semigroup $S$, the family $\mathcal F$ is a filter. Let $x\in G$ be an arbitrary element and $U$ be an arbitrary member of the filter $\mathcal F$. Since $x0=0$ and left shifts on the semigroup $S$ are continuous, there exists a member $V$ of the filter $\mathcal F$ such that $xV\subset U$. Then $V\subset x^{-1}U$, so $x^{-1}U\in\mathcal{F}$. Since $S$ is Hausdorff, there exists a neighbourhood $W\subset G$ of the unit such that
$G\setminus W\in\mathcal F$.

Now we consider cases (1)--(5) separately.

(1) Since the group $G$ is left precompact, there exists a finite subset
$F$ of the group $G$ such that $FW=G$. But then
\begin{equation*}
\varnothing=G\setminus\bigcup_{x\in F}xW=
\bigcap_{x\in F} x(G\setminus W)\in\mathcal F,
\end{equation*}
a contradiction.

(2) Since the semiregularization $G_r$ of the group $G$ is a feebly compact $T_3$ (and, hence, a regular)
paratopological group, $G_r$ is a topological group by Lemma~\ref{ArkRezReg}.
Therefore $G_r$ is precompact. Thus there exists a finite subset
$F$ of the group $G$ such that $F\cdot\operatorname{cl}_G(W)=G$.
But then
\begin{equation*}
\varnothing=G\setminus\bigcup_{x\in F}x\cdot\operatorname{cl}_G(W)=
\bigcap_{x\in F} x(G\setminus\operatorname{cl}_G(W))\in\mathcal F,
\end{equation*}
a contradiction.

(3) Since the group $G$ is left $\omega$-precompact, there exists a countable subset
$C=\{c_n\colon n\in\mathbb{N}\}$ of the group $G$ such that $CW=G$. For each positive integer $n$ put $C_n=\{c_i\colon 1\le i\le n\}$ and $V_n=G\setminus C_nW$. Since the family $\mathcal F$ is a filter we have that $V_n\in\mathcal F$. Since $0$ is a non-isolated point of the semigroup $S$,
 $\operatorname{int}_G(V_n)$ is a non-empty open subset of the
space $G$. Since the space $G$ is feebly compact, there exists a point $x\in\bigcap_{n\in\mathbb{N}}\operatorname{cl}_G\left(\operatorname{int}_G(V_n)\right)$.
Since $G=CW$ we conclude that there exists a positive integer $n$ such that $x\in c_nW$. But
\begin{equation*}
c_nW\cap\operatorname{cl}_G\left(\operatorname{int}_G(V_n)\right)\subset c_nW\cap\operatorname{cl}_G(V_n)=c_nW\cap\operatorname{cl}_G\left(G\setminus C_nW\right)=c_nW\cap (G\setminus C_nW)=\varnothing,
\end{equation*}
a contradiction.

(4) First we suppose that the space of the semigroup $S$ is regular.
Lemma~\ref{ArkRezReg} implies that
$G$ is a topological group.
If the group $G$ is left precompact then $0$ is an isolated point of the semigroup $S$ by Case (1). So we assume the group $G$ is not left precompact. By Lemma~\ref{TBGr} there exists a neighbourhood
$W_0\subset G$ of the unit such that $G\not=F_0W_0F_0$ for each finite subset $F_0$ of the group $G$. The multiplication on the semigroup $S$ is continuous. Hence there exists a member $V_1$ of the filter $\mathcal F$ such that $V_1^2\subset G\setminus W$. Moreover, there exist a symmetric open neighbourhood
$W_1$ of the unit and a member $V_2$ of the filter $\mathcal F$ such that
$W_1^5V_2\subset V_1$ and $W_1^4\subset W_0$.
Let $C$ be a maximal subset of the set $G\setminus V_2$ such that $W_1^2c\cap W_1^2c'=\varnothing$
for distinct elements $c,c'$ of the set $C$. If $z$ is an arbitrary element of the set $G\setminus V_2$
then $W_1^2c\cap W_1^2z\not=\varnothing$ for an element $c$ of the set $C$. Hence
$G\setminus V_2\subset W_1^4C$. Put $F=\{c\in C\colon W_1c\cap V_2=\varnothing\}$. Then we have that $C\setminus F\subset W_1V_2$ and hence $G\setminus V_2\subset W_1^4C\subset W_1^4F\cup W_1^5V_2$. Then we get that
$G\setminus V_1\subset G\setminus V_2\subset W_1^4F\cup W_1^5V_2$ and hence $G\setminus V_1\subset W_1^4F$, because $W_1^5V_2\subset V_1$. Since $e\not\in G\setminus W\supset V_1^2\supset (G\setminus W_1^4F)^2$, we see that $x(G\setminus W_1^4F)\not\ni e$ for each element $x\in G\setminus W_1^4F$. Then we have that $(G\setminus W_1^4F)^{-1}\subset W_1^4F$ and hence $G\subset W_1^4F\cup F^{-1}W_1^4$.

Since $W_1^4\subset W_0$ we conclude that the set $F$ is infinite. Let $C^{\prime}$ be an arbitrary countable infinite subset of the set $F$. Since the space $S$ is feebly compact we have that there exists a point $x_0\in S$ such that each neighbourhood $V^{\prime}$ of the point $x_0$ intersects infinitely many members of the family $\left\{W_1c\colon c\in C^{\prime}\right\}$ of the open subsets of the space $S$. Clearly, $x_0\not=0$. Then $x_0\in G$. Put $V^{\prime}=W_1x_0$. Then there exist distinct elements $c$ and $c^{\prime}$ of the set $C^{\prime}$ such that $W_1c\cap W_1x_0\not=\varnothing$ and $W_1c^{\prime}\cap W_1x_0\not=\varnothing$. This implies $x_0\in W_1^2c\cap W_1^2c^{\prime}\neq\varnothing$, a contradiction.

Now we consider the case when the space of the semigroup $S$ is not necessarily
regular. We claim that the semiregularization $S_r$ of the semigroup $S$ is a regular
topological semigroup.

Indeed, let $U=\operatorname{int}_S(\operatorname{cl}_S(U))$ be an arbitrary
regular open subset of the space $S$ and $x\in U$ be an arbitrary point. If $x\not=0$ then there exists
an open neighbourhood $W\subset G$ of the unit such that $0\not\in  \operatorname{cl}_S(W)$ and $xW^2\subset U$. Then $x\in xW^2\subset xW\operatorname{cl}_S(W)\subset \operatorname{cl}_S(U)$. Since translations by elements of the group $G$ are homeomorphisms of the space, the set $xW\operatorname{cl}_S(W)$ is open, and hence
\begin{equation*}
x\in xW\subset \operatorname{cl}_S(xW)\subset xW\operatorname{cl}_S(W)\subset\operatorname{int}_S(\operatorname{cl}_S(U)).
\end{equation*}
If $x=0$ then there exist an open neighbourhood $W\subset G$ of the unit and
an open neighbourhood $V\subset S$ of $x$ such that $WV\subset U$. Then $x\in V\subset WV\subset
W\operatorname{cl}_S(V)\subset \operatorname{cl}_S(U)$. We have that $x\in V\subset \operatorname{int}_S(\operatorname{cl}_S(U))$. Let $y\in\operatorname{cl}_S(V)$ be an arbitrary point distinct from $0$. Then $Wy\subset\operatorname{cl}_S(U)$ is an open neighbourhood of $y$. Hence $y\in Wy\subset \operatorname{int}_S(\operatorname{cl}_S(U))$. Therefore the space $S_r$ is regular.

Now we show that multiplication on the semigroup $S_r$ is continuous. Indeed, let $x,y\in S$ be arbitrary points and $O_{xy}=\operatorname{int}_S(\operatorname{cl}_S(O_{xy}))\ni xy$ be an arbitrary regular open subset of the space $S$. There exist open subsets $O_x\ni x$, $O_y\ni y$ of the semigroup $S$ such that $O_xO_y\subset O_{xy}$. Since multiplication on the semigroup $S$ is continuous, $\operatorname{cl}_S(O_x)\cdot\operatorname{cl}_S(O_y)\subset \operatorname{cl}_S(O_{xy})$. Let $x'\in \operatorname{cl}_S(O_x)$, $y'\in \operatorname{cl}_S(O_y)$ be arbitrary points. If $x'\not=0$ then since left translations by elements of the group $G$ are homeomorphisms of $S$ onto itself, the set $x'\operatorname{int}_S(\operatorname{cl}_S(O_y))$ is open, so $x'y'\in\operatorname{int}_S(\operatorname{cl}_S(O_{xy}))$.
Similarly, if $y'\not=0$ then  $x'y'\in\operatorname{int}_S(\operatorname{cl}_S(O_{xy}))$ too.
If not $x=y=0$ then we can choose the neighbourhoods $O_x$ and $O_y$ so small
that $\operatorname{cl}_S(O_x)\cap \operatorname{cl}_S(O_x)\not\ni 0$. Then necessarily
$x'\ne 0$ or $y'\ne 0$. If $x=y=0$ and $x'=y'=0$ then
$x'y'=xy\in \operatorname{int}_S(\operatorname{cl}_S(O_{xy}))$ by the choice of the neighbourhood $O_{xy}$.
Therefore, in all cases we have $x'y'\in\operatorname{int}_S(\operatorname{cl}_S(O_{xy}))$. Thus
$\operatorname{int}_S(\operatorname{cl}_S(O_{x}))\cdot
\operatorname{int}_S(\operatorname{cl}_S(O_{y}))\subset
\operatorname{int}_S(\operatorname{cl}_S(O_{xy}))$.


So, by the already proved case of the regular semigroup, $0$ is an isolated point
of the semigroup $S_r$. Since the topology of the semigroup $S_r$ is weaker than
the topology of the semigroup $S$, $0$ is an isolated point of the semigroup $S$.

(5) Since multiplication on the semigroup $S$ is continuous, there exist a neighbourhood $W_1\subset W$ of the unit and a member $V$ of the filter $\mathcal F$ such that $W_1V\subset G\setminus W$. Then $W_1V\cap W=\varnothing$, so $V\cap W_1^{-1}W_1 \subset V\cap W_1^{-1}W=\varnothing$. Hence $G\setminus  W_1^{-1}W_1\in\mathcal F$. By the assumption, there exists a finite subset $F$ of the group $G$ such that $G=FW^{-1}W$. Then
\begin{equation*}
\mathcal F\ni \bigcap_{x\in F} x(G\setminus W_1^{-1}W_1)=G\setminus\bigcup_{x\in
F}xW_1^{-1}W\neq\varnothing,
\end{equation*}
a contradiction.
\end{proof}

\begin{remark}
Authors do not know, if a counterpart of Lemma~\ref{lemma-2.1} holds when the group $G$ is a countably compact semitopological group.
\end{remark}


\section{Feebly compact topological Brandt $\lambda^0$-extensions of topological semigroups and primitive inverse semitopological semigroups}

\begin{proposition}\label{proposition-2.2}
Let $S$ be a Hausdorff semitopological semigroup such that $S$ is an orthogonal sum of the family $\{B_{\lambda_{i}}^0(S_i)\colon i\in\mathscr{I}\}$ of topological Brandt $\lambda^0_i$-extensions of semitopological monoids with zeros. Then for every non-zero element $(\alpha_i,g_i,\beta_i)\in (S_i)_{\alpha_i,\beta_i}\subseteq B_{\lambda_{i}}^0(S_i)\subseteq S$ there exists an open neighbourhood $U_{(\alpha_i,g_i,\beta_i)}$ of $(\alpha_i,g_i,\beta_i)$ in $S$ such that $U_{(\alpha_i,g_i,\beta_i)}\subseteq (S_i)_{\alpha_i,\beta_i}^*$ and hence every set $(S_i)_{\alpha_i,\beta_i}^*$ is an open subset of $S$.
\end{proposition}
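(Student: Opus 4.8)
The plan is to show directly that the set $(S_i)_{\alpha_i,\beta_i}^*$ is open in $S$; then one may simply take $U_{(\alpha_i,g_i,\beta_i)}=(S_i)_{\alpha_i,\beta_i}^*$, which handles both assertions at once. Fix the block index $i$ and note that, since the given element $(\alpha_i,g_i,\beta_i)$ is non-zero, we have $g_i\in S_i^*$, so the monoid $S_i$ is non-trivial and its unit $1_i$ is distinct from its zero $0_i$. Hence $u=(\alpha_i,1_i,\alpha_i)$ and $v=(\beta_i,1_i,\beta_i)$ are well-defined non-zero idempotents of $S$, and I introduce the map $\phi\colon S\to S$ defined by $\phi(x)=u\cdot x\cdot v$, which is unambiguous by associativity.

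First I would compute $\phi$ explicitly using the orthogonal sum structure together with the Brandt multiplication. If $x$ lies in a summand $B^0_{\lambda_j}(S_j)$ with $j\ne i$, then $u\cdot x=0$, since products of elements from distinct summands of an orthogonal sum vanish. If $x=(\gamma,b,\delta)\in B^0_{\lambda_i}(S_i)$ is non-zero, then $u\cdot x=(\alpha_i,b,\delta)$ when $\gamma=\alpha_i$ and $u\cdot x=0$ otherwise, and subsequently $(\alpha_i,b,\delta)\cdot v=(\alpha_i,b,\beta_i)$ when $\delta=\beta_i$ and $0$ otherwise. Collecting the cases, $\phi(x)=(\alpha_i,b,\beta_i)=x$ whenever $x=(\alpha_i,b,\beta_i)\in (S_i)_{\alpha_i,\beta_i}^*$, while $\phi(x)=0$ for every other $x\in S$. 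In particular $\phi(x)\ne 0$ if and only if $x\in (S_i)_{\alpha_i,\beta_i}^*$, that is, $\phi^{-1}(S\setminus\{0\})=(S_i)_{\alpha_i,\beta_i}^*$.

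It then remains to observe that $\phi$ is continuous and that $S\setminus\{0\}$ is open. Continuity of $\phi$ follows from separate continuity of the semigroup operation: $\phi$ is the composition of the right translation $x\mapsto x\cdot v$ and the left translation $y\mapsto u\cdot y$, each of which is continuous because $S$ is a semitopological semigroup. Since $S$ is Hausdorff, the singleton $\{0\}$ is closed, so $S\setminus\{0\}$ is open; consequently $\phi^{-1}(S\setminus\{0\})=(S_i)_{\alpha_i,\beta_i}^*$ is open in $S$. This yields the required neighbourhood and the openness of every $(S_i)_{\alpha_i,\beta_i}^*$ simultaneously.

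The argument is short, so the only genuine point requiring care is the case analysis for $\phi$: one must verify that all products of elements from different summands, and all products within $B^0_{\lambda_i}(S_i)$ with mismatched indices, collapse to $0$, so that the preimage $\phi^{-1}(S\setminus\{0\})$ is exactly the block $(S_i)_{\alpha_i,\beta_i}^*$ and nothing more. The mild preliminary check that $1_i\ne 0_i$ (which guarantees that $u$ and $v$ are non-zero and act as the stated one-sided partial identities) is what makes the sandwich map fix $(S_i)_{\alpha_i,\beta_i}^*$ pointwise; should $S_i$ be trivial, the set $(S_i)_{\alpha_i,\beta_i}^*$ is empty and the statement holds vacuously.
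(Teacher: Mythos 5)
Your proof is correct and rests on exactly the same mechanism as the paper's: multiplication by the unit idempotents $u=(\alpha_i,e_i,\alpha_i)$ and $v=(\beta_i,e_i,\beta_i)$, separate continuity of the corresponding translations, and the fact that $\{0\}$ is closed in the Hausdorff space $S$. The only difference is organizational — the paper argues by contradiction, pulling a neighbourhood of $(\alpha_i,g_i,\beta_i)$ that misses $0$ back through the two one-sided translations separately, whereas you compose them into the sandwich map $x\mapsto u\cdot x\cdot v$ and exhibit the whole block $(S_i)_{\alpha_i,\beta_i}^*$ at once as the preimage of the open set $S\setminus\{0\}$ — a cleaner packaging of the same idea.
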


\begin{proof}
Suppose the contrary that $U_{(\alpha_i,g_i,\beta_i)}\nsubseteq (S_i)_{\alpha_i,\beta_i}^0$ for every open neighbourhood $U_{(\alpha_i,g_i,\beta_i)}$ of $(\alpha_i,g_i,\beta_i)$ in $S$. Hausdorffness of $S$ implies that there exists an open neighbourhood $V_{(\alpha_i,g_i,\beta_i)}$ of $(\alpha_i,g_i,\beta_i)$ in $S$ such that $0\notin V_{(\alpha_i,g_i,\beta_i)}$. By the separate continuity of multiplication in $S$ there exists an open neighbourhood $W_{(\alpha_i,g_i,\beta_i)}$ of $(\alpha_i,g_i,\beta_i)$ in $S$ such that
\begin{equation*}
W_{(\alpha_i,g_i,\beta_i)}\cdot (\beta_i,e_i,\beta_i)\subseteq  V_{(\alpha_i,g_i,\beta_i)}\qquad \hbox{and} \qquad (\alpha_i,e_i,\alpha_i)\cdot W_{(\alpha_i,g_i,\beta_i)}\subseteq  V_{(\alpha_i,g_i,\beta_i)}.
\end{equation*}
Then condition $W_{(\alpha_i,g_i,\beta_i)}\nsubseteq (S_i)_{\alpha_i,\beta_i}^*$ implies that either $W_{(\alpha_i,g_i,\beta_i)}\cdot (\beta_i,e_i,\beta_i)\ni 0$ or $(\alpha_i,e_i,\alpha_i)\cdot W_{(\alpha_i,g_i,\beta_i)}\ni 0$, a contradiction. The obtained contradiction implies the statement of the proposition.
\end{proof}

\begin{corollary}\label{corollary-2.3}
Let $S$ be a Hausdorff primitive inverse semitopological semigroup and $S$ be an orthogonal sum of the family $\{B_{\lambda_{i}}(G_i)\colon i\in\mathscr{I}\}$ of semitopological Brandt semigroups with zeros. Then the following statements hold:
\begin{itemize}
    \item[$(i)$] for every non-zero element $(\alpha_i,g_i,\beta_i)\in (G_i)_{\alpha_i,\beta_i}\subseteq B_{\lambda_{i}}(G_i)\subseteq S$ there exists an open neighbourhood $U_{(\alpha_i,g_i,\beta_i)}$ of $(\alpha_i,g_i,\beta_i)$ in $S$ such that $U_{(\alpha_i,g_i,\beta_i)}\subseteq (G_i)_{\alpha_i,\beta_i}$ and hence every subset $(G_i)_{\alpha_i,\beta_i}$ is an open subset of $S$;

    \item[$(ii)$] every non-zero idempotent of $S$ is an isolated point of $E(S)$.
\end{itemize}
\end{corollary}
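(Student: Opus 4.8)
The plan is to deduce both statements from Proposition~\ref{proposition-2.2} after recognizing each Brandt summand as a Brandt $\lambda^0$-extension of a monoid with zero. For a group $G_i$, the set $G_i^0=G_i\sqcup\{\mathscr{O}\}$ obtained by adjoining a zero is a monoid with zero whose unique non-zero idempotent is the group unit $e_i$, and comparing the two multiplication tables shows that the Brandt semigroup $B_{\lambda_i}(G_i)$ is isomorphic to the Brandt $\lambda^0_i$-extension $B^0_{\lambda_i}(G_i^0)$, the isomorphism sending $(\alpha,g,\beta)$ to itself and the zero to the zero. Fixing an index $\alpha$, the ``corner'' $(\alpha,e_i,\alpha)B_{\lambda_i}(G_i)(\alpha,e_i,\alpha)=(G_i)_{\alpha,\alpha}\cup\{0\}$ is a subsemigroup of $S$, hence a semitopological monoid with zero in the subspace topology; transporting this topology along the above isomorphism equips $G_i^0$ with a semitopological monoid topology for which condition~(b) of Definition~\ref{def2} holds by construction. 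Thus $S$ is an orthogonal sum of topological Brandt $\lambda^0_i$-extensions of semitopological monoids with zeros, and Proposition~\ref{proposition-2.2} applies.

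For statement~$(i)$ I would simply read off the conclusion of Proposition~\ref{proposition-2.2}: for every non-zero $(\alpha_i,g_i,\beta_i)$ there is an open neighbourhood $U_{(\alpha_i,g_i,\beta_i)}$ in $S$ with $U_{(\alpha_i,g_i,\beta_i)}\subseteq (G_i^0)_{\alpha_i,\beta_i}^*$. Since $(G_i^0)^*=G_i$ contains no zero, $(G_i^0)_{\alpha_i,\beta_i}^*$ coincides with $(G_i)_{\alpha_i,\beta_i}$, which is exactly the required neighbourhood; openness of each $(G_i)_{\alpha_i,\beta_i}$ in $S$ then follows because it is the union of such neighbourhoods over its points.

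For statement~$(ii)$ the key algebraic observation is that the non-zero idempotents of $B_{\lambda_i}(G_i)$ are precisely the diagonal elements $(\alpha,e_i,\alpha)$ with $\alpha\in\lambda_i$: indeed $(\alpha,g,\beta)$ is idempotent if and only if $\beta=\alpha$ and $g^2=g$ in $G_i$, and in a group the only such $g$ is the unit $e_i$. Fixing a non-zero idempotent $(\alpha,e_i,\alpha)$ and applying statement~$(i)$ with $\beta_i=\alpha_i=\alpha$, I obtain an open neighbourhood $U\ni(\alpha,e_i,\alpha)$ with $U\subseteq(G_i)_{\alpha,\alpha}$. Since $(G_i)_{\alpha,\alpha}$ contains no zero and its only idempotent is $(\alpha,e_i,\alpha)$, we get $U\cap E(S)=\{(\alpha,e_i,\alpha)\}$, so $(\alpha,e_i,\alpha)$ is an isolated point of $E(S)$.

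The two idempotent computations and the appeal to Proposition~\ref{proposition-2.2} are routine; the one point requiring care is the first paragraph, namely checking that the abstract isomorphism $B_{\lambda_i}(G_i)\cong B^0_{\lambda_i}(G_i^0)$ is compatible with the topologies so that the hypotheses of Proposition~\ref{proposition-2.2} are genuinely met. In fact the proof of that proposition uses only separate continuity together with the corner idempotents $(\alpha_i,e_i,\alpha_i)$ and $(\beta_i,e_i,\beta_i)$, so condition~(b) of Definition~\ref{def2} is not strictly needed and this compatibility is a formality; I would nonetheless record it explicitly to keep the reduction clean.
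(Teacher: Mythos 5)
Your proof is correct and takes essentially the same route as the paper: the paper's own proof is the one-liner that assertion $(i)$ follows from Proposition~\ref{proposition-2.2} and $(ii)$ follows from $(i)$. Your additional work --- identifying $B_{\lambda_i}(G_i)$ with $B^0_{\lambda_i}(G_i^0)$, transporting the corner subspace topology to $G_i^0$, and computing that the non-zero idempotents are exactly the diagonal units $(\alpha,e_i,\alpha)$ --- just makes explicit the details the paper leaves implicit.
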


\begin{proof}
Assertion $(i)$ follows from Proposition~\ref{proposition-2.2} and $(ii)$ follows from $(i)$.
\end{proof}

\begin{proposition}\label{proposition-2.4}
Let $S$ be a Hausdorff countably compact semitopological semigroup such that $S$ is an orthogonal sum of the family $\{B_{\lambda_{i}}^0(S_i)\colon i\in\mathscr{I}\}$ of topological Brandt $\lambda^0_i$-extensions of semitopological monoids with zeros. Then for every open neighbourhood $U(0)$ of zero $0$ in $S$ the set of pairs of indices $(\alpha_i,\beta_i)$ such that $(S_i)_{\alpha_i,\beta_i}\nsubseteq U(0)$ is finite. Moreover, every maximal topological Brandt $\lambda^0_i$-extension $B_{\lambda_{i}}^0(S_i)$, $i\in\mathscr{I}$, is countably compact.
\end{proposition}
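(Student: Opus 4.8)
The plan is to derive both assertions from two ingredients already available: the openness of the punctured blocks $(S_i)^*_{\alpha_i,\beta_i}$ in $S$, guaranteed by Proposition~\ref{proposition-2.2}, and the fact that in a Hausdorff (hence $T_1$) space countable compactness means every infinite subset has an accumulation point, every neighbourhood of which contains infinitely many points of the set. The first assertion I would prove by contradiction using an accumulation point, and the second by showing each summand is closed and invoking that a closed subspace of a countably compact space is countably compact.

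For the first assertion, suppose some open neighbourhood $U(0)$ of the zero is such that the family of blocks $(S_i)_{\alpha_i,\beta_i}$ with $(S_i)_{\alpha_i,\beta_i}\nsubseteq U(0)$ is infinite. Since $0\in U(0)$, each such block must contain a non-zero point lying outside $U(0)$. Choosing one such point per block yields a countably infinite set $D=\{x_k\}_{k\in\mathbb{N}}$ with $x_k\notin U(0)$, where the $x_k$ lie in pairwise distinct, hence pairwise disjoint, punctured blocks $(S_{i_k})^*_{\alpha_k,\beta_k}$. Because $S$ is Hausdorff and countably compact, the infinite set $D$ has an accumulation point $p$ in $S$.

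I would then rule out both possibilities for $p$. If $p=0$, then $U(0)$ is a neighbourhood of $p$ containing no point of $D$, which is impossible for an accumulation point of an infinite set. If $p\neq0$, then $p$ lies in some punctured block $(S_j)^*_{\gamma,\delta}$, which is open in $S$ by Proposition~\ref{proposition-2.2}; since the punctured blocks are pairwise disjoint and $D$ meets each of them in at most one point, this open neighbourhood of $p$ contains at most one point of $D$, again contradicting (via the $T_1$ axiom) that $p$ accumulates the infinite set $D$. Hence the family of blocks not contained in $U(0)$ must be finite.

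For the second assertion, the key observation is that each summand $B_{\lambda_i}^0(S_i)$ is closed in $S$: its complement equals $\bigcup_{j\neq i}\bigl(B_{\lambda_j}^0(S_j)\bigr)^*=\bigcup_{j\neq i}\bigcup_{\gamma,\delta}(S_j)^*_{\gamma,\delta}$, a union of sets open in $S$ by Proposition~\ref{proposition-2.2}, so the complement is open. A closed subspace of a countably compact space is countably compact, whence each $B_{\lambda_i}^0(S_i)$ inherits countable compactness from $S$. The routine steps here are the complement computation and the standard closed-subspace fact; the one point requiring care is in the first assertion, namely ensuring the chosen points occupy pairwise distinct punctured blocks so that no single open block can contain more than one of them, and using the $T_1$ hypothesis to convert ``$p$ has a neighbourhood meeting $D$ in a finite set'' into the desired contradiction. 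This is where the structure furnished by Proposition~\ref{proposition-2.2} is indispensable.
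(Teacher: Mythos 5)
Your proof is correct and follows essentially the same route as the paper: for the first assertion the paper likewise selects one point outside $U(0)$ from each offending block and uses Proposition~\ref{proposition-2.2} to conclude this infinite set has no accumulation point, contradicting countable compactness, and for the second it likewise deduces that each summand $B^0_{\lambda_i}(S_i)$ is closed (via the openness of the punctured blocks) and applies the fact that closed subspaces of countably compact spaces are countably compact. You merely make explicit the case analysis ($p=0$ versus $p\neq 0$) and the complement computation that the paper leaves to the reader.
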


\begin{proof}
Suppose to the contrary that there exists an open neighbourhood
$U(0)$  of the zero $0$ in $S$ such that $(S_i)_{\alpha_i,\beta_i}\nsubseteq U(0)$ for infinitely many pairs of indices $(\alpha_i,\beta_i)$. Then for every
such $(S_i)_{\alpha_i,\beta_i}$ we choose a point $x_{{\alpha_i,\beta_i}}\in (S_i)_{\alpha_i,\beta_i}\setminus U(0)$ and put
$A=\bigcup\{x_{\alpha_i,\beta_i}\}$. Then $A$ is infinite and
Proposition~\ref{proposition-2.2} implies that the set $A$ has no accumulation
point in $S$. This contradicts Theorem~3.10.3 of \cite{Engelking1989}. The obtained
contradiction implies the first statement of the proposition.

The second statement follows from Proposition~\ref{proposition-2.2}, because by Theorem~3.10.4 of~\cite{Engelking1989} every closed subspace of a countably compact space is countably compact.
\end{proof}

Proposition~\ref{proposition-2.4} implies the following corollary:

\begin{corollary}\label{corollary-2.5}
Let $S$ be a Hausdorff primitive inverse countably compact semitopological semigroup and $S$ be an orthogonal sum of the family $\{B_{\lambda_{i}}(G_i)\colon i\in\mathscr{I}\}$ of semitopological Brandt semigroups with zeros. Then for every open neighbourhood $U(0)$ of zero $0$ in $S$ the set of pairs of indices $(\alpha_i,\beta_i)$ such that $(S_i)_{\alpha_i,\beta_i}\nsubseteq U(0)$ is finite. Moreover, every maximal Brandt subsemigroup $B_{\lambda_{i}}(G_i)$, $i\in\mathscr{I}$, is countably compact.
\end{corollary}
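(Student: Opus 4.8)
The plan is to deduce the statement directly from Proposition~\ref{proposition-2.4} by recasting the given orthogonal sum of Brandt semigroups as an orthogonal sum of topological Brandt $\lambda^0$-extensions of semitopological monoids with zeros, to which that proposition applies verbatim.

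First I would record the algebraic identification. For a group $G_i$ let $G_i^0$ denote the monoid obtained by adjoining an external zero to $G_i$, so that $G_i^0$ is a monoid with zero whose set of non-zero elements is exactly $G_i$. Then the Brandt semigroup $B_{\lambda_i}(G_i)$ is isomorphic, as a semigroup, to the Brandt $\lambda^0_i$-extension $B^0_{\lambda_i}(G_i^0)$: both consist of the single zero together with the triples $(\alpha,g,\beta)$ with $g\in G_i$ and $\alpha,\beta\in\lambda_i$, and the two multiplications coincide, since a product of triples is sent to $0$ precisely when the inner coordinates fail to match or their product in $G_i^0$ is the adjoined zero. Under this identification the subset $(G_i)_{\alpha_i,\beta_i}$ appearing in the corollary is exactly the set $(G_i^0)_{\alpha_i,\beta_i}$ in the notation of Proposition~\ref{proposition-2.4}, and the summand $B_{\lambda_i}(G_i)$ is exactly $B^0_{\lambda_i}(G_i^0)$.

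Next I would equip each base monoid $G_i^0$ with the topology transported from a diagonal block. Fixing any $\alpha_i\in\lambda_i$, I give $G_i^0$ the topology under which the natural bijection $G_i^0\to (G_i^0)_{\alpha_i,\alpha_i}$, sending $g$ to $(\alpha_i,g,\alpha_i)$ and $0$ to $0$, is a homeomorphism, where $(G_i^0)_{\alpha_i,\alpha_i}$ carries the subspace topology from $S$. With this choice $G_i^0$ is a semitopological monoid with zero, because $(G_i^0)_{\alpha_i,\alpha_i}$ is a subsemigroup of the semitopological semigroup $S$ and separate continuity of translations passes to subspaces. Condition~(b) of Definition~\ref{def2} then holds by construction for this $\alpha_i$, while condition~(a) holds because $B^0_{\lambda_i}(G_i^0)=B_{\lambda_i}(G_i)$, being a subsemigroup of $S$ with the subspace topology, is itself a semitopological semigroup. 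Hence each $B^0_{\lambda_i}(G_i^0)$ is a topological Brandt $\lambda^0_i$-extension of the semitopological monoid with zero $G_i^0$, and $S=\sum_{i\in\mathscr I}B^0_{\lambda_i}(G_i^0)$ is an orthogonal sum of such extensions.

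Finally, since $S$ is Hausdorff and countably compact by hypothesis, all assumptions of Proposition~\ref{proposition-2.4} are met. Applying it yields at once that for every open neighbourhood $U(0)$ of $0$ the set of pairs $(\alpha_i,\beta_i)$ with $(G_i^0)_{\alpha_i,\beta_i}\nsubseteq U(0)$ is finite, which is the first assertion, and that every maximal extension $B^0_{\lambda_i}(G_i^0)=B_{\lambda_i}(G_i)$ is countably compact, which is the second. The only point needing care, and the one I would state explicitly, is that in the abstract decomposition of a primitive inverse semigroup the Brandt summands are precisely the maximal Brandt subsemigroups, so that ``maximal topological Brandt $\lambda^0_i$-extension'' in Proposition~\ref{proposition-2.4} translates to ``maximal Brandt subsemigroup'' here; this is exactly the structure theorem (Theorem~II.4.3 of~\cite{Petrich1984}) together with the notion of maximality recalled in the introduction, so no additional argument is required.
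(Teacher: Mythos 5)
Your proposal is correct and takes essentially the same route as the paper: the paper gives no separate argument at all, simply stating that Proposition~\ref{proposition-2.4} implies Corollary~\ref{corollary-2.5}, and your write-up spells out exactly the identification $B_{\lambda_i}(G_i)\cong B^0_{\lambda_i}(G_i^0)$ (with the topology on $G_i^0$ transported from a diagonal block, making each summand a topological Brandt $\lambda^0_i$-extension of a semitopological monoid with zero) on which that implication rests. The additional remark that the summands are precisely the maximal Brandt subsemigroups is a correct and appropriate finishing touch.
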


\begin{proposition}\label{proposition-2.6}
Let $S$ be a Hausdorff feebly compact semitopological semigroup such that $S$ is an orthogonal sum of the family $\{B_{\lambda_{i}}^0(S_i)\colon i\in\mathscr{I}\}$ of topological Brandt $\lambda^0_i$-extensions of semitopological monoids with zeros. Then
\begin{itemize}
  \item[$(i)$] every maximal topological Brandt $\lambda^0_i$-extension $B_{\lambda_{i}}^0(S_i)$, $i\in\mathscr{I}$, is feebly compact;
  \item[$(ii)$] the subspace $(S_i)_{\alpha_i,\beta_i}$ is feebly compact for all $\alpha_i,\beta_i\in \lambda_i$.
\end{itemize}
\end{proposition}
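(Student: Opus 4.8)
The plan is to prove both statements at once, since each of the subspaces in question has the form $Y=\{0\}\cup A$, where $A$ is an \emph{open} subset of $S$. For $(i)$ I would take $Y=B^0_{\lambda_i}(S_i)$ and $A=\bigl(B^0_{\lambda_i}(S_i)\bigr)^*=\bigcup_{\alpha,\beta\in\lambda_i}(S_i)_{\alpha,\beta}^*$, and for $(ii)$ I would take $Y=(S_i)_{\alpha_i,\beta_i}$ and $A=(S_i)_{\alpha_i,\beta_i}^*$. In both cases Proposition~\ref{proposition-2.2} guarantees that $A$ is open in $S$ (a union of open blocks in the first case, a single open block in the second), and $Y$, being a subspace of the Hausdorff space $S$, is Hausdorff. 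The point to keep in mind is that pseudocompactness is \emph{not} inherited by closed subspaces, so the argument of Proposition~\ref{proposition-2.4}, which used that closed subspaces of countably compact spaces are countably compact, is unavailable here. Instead I will exploit the ``almost clopen'' structure of $Y$ together with the separation of the blocks supplied by Proposition~\ref{proposition-2.2}.

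I shall use the criterion recalled in the introduction: a Hausdorff space is pseudocompact if and only if every locally finite family of non-empty open subsets of it is finite. So suppose, towards a contradiction, that $\mathcal U$ is an infinite locally finite family of non-empty open subsets of $Y$. Since $\mathcal U$ is locally finite at the point $0$, there is an open neighbourhood $W$ of $0$ in $Y$ meeting only finitely many members of $\mathcal U$. Discarding those finitely many members, I obtain an infinite subfamily $\mathcal U'$ whose members are disjoint from $W$; as $0\in W$, every member of $\mathcal U'$ is contained in $Y\setminus\{0\}=A$. Because $A$ is open in $S$, each member of $\mathcal U'$ is then a non-empty open subset of $S$.

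The heart of the proof is to verify that $\mathcal U'$ is locally finite in $S$, which will contradict the pseudocompactness of $S$. I would check this point by point. At a point $x\in A$, local finiteness of $\mathcal U$ in $Y$ provides a $Y$-open neighbourhood of $x$ meeting only finitely many members; intersecting it with the $S$-open set $A$ yields an $S$-open neighbourhood of $x$ with the same property. At the point $0$, writing $W=\widetilde W\cap Y$ with $\widetilde W$ open in $S$, the neighbourhood $\widetilde W$ of $0$ meets no member of $\mathcal U'$, since these members lie in $A\subseteq Y$ and are disjoint from $W$. The decisive case is a point $y\in S\setminus Y$: such a $y$ lies in some block $(S_j)_{\gamma,\delta}^*$ distinct from the block(s) constituting $A$, and by Proposition~\ref{proposition-2.2} this block is an open neighbourhood of $y$ disjoint from $A$, hence disjoint from every member of $\mathcal U'$. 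This last case is exactly where the openness of the individual blocks is indispensable, and I expect it to be the main obstacle: without the separation of the blocks furnished by Proposition~\ref{proposition-2.2}, the point $0$ could be approached by members of $\mathcal U'$ through other orthogonal summands and local finiteness in $S$ would fail.

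Combining the three cases, $\mathcal U'$ is an infinite locally finite family of non-empty open subsets of $S$, contradicting the pseudocompactness of $S$. Hence no such infinite $\mathcal U$ exists, so $Y$ is pseudocompact; taking $Y=B^0_{\lambda_i}(S_i)$ gives $(i)$ and taking $Y=(S_i)_{\alpha_i,\beta_i}$ gives $(ii)$.
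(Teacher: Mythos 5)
Your proof is correct. For part $(i)$ it is essentially the paper's own argument, stated in contrapositive form: the paper likewise takes an infinite family of non-empty open subsets of $B^0_{\lambda_i}(S_i)$, discards the members containing $0$, and uses pseudocompactness of $S$ together with the openness of the blocks (Proposition~\ref{proposition-2.2}) to force an accumulation point of the remaining subfamily to lie inside $B^0_{\lambda_i}(S_i)$. Where you genuinely diverge is part $(ii)$: the paper does not repeat this argument there, but instead notes that by separate continuity the map $f_{\alpha_i,\beta_i}\colon S\to S$, $x\mapsto(\alpha_i,1_{S_i},\alpha_i)\cdot x\cdot(\beta_i,1_{S_i},\beta_i)$, is continuous with image exactly $(S_i)_{\alpha_i,\beta_i}$, so $(S_i)_{\alpha_i,\beta_i}$ is pseudocompact as a continuous image of a pseudocompact space. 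That is a one-line algebraic argument, but it leans on the monoid units and the semigroup operation; your argument is purely topological, using only the Hausdorff criterion for pseudocompactness and the openness of the blocks, and it has the merit of handling $(i)$ and $(ii)$ uniformly --- in fact your three-case verification shows more generally that $\{0\}\cup A$ is pseudocompact for \emph{any} union $A$ of blocks $(S_j)_{\gamma,\delta}^*$, since the blocks partition $S\setminus\{0\}$ into open sets. So both routes are sound: the paper's retraction trick buys brevity in $(ii)$, while your unified locally-finite-family argument buys generality and independence from the algebraic structure beyond Proposition~\ref{proposition-2.2}.
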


\begin{proof}
$(i)$ Let $\mathscr{F}=\{U_\alpha\colon \alpha\in\mathscr{J}\}$ be a infinite family of open non-empty subsets of $B_{\lambda_{i}}^0(S_i)$. If $0$ is contained in infinitely
many members of the family $\mathscr{F}$ then it is not locally finite.
In the opposite case the family $\mathscr{F}$ contains an infinite subfamily
$\mathscr{F}'$ no member of which contains $0$. Since the space $S$ is feebly compact, there exists a point $x\in S$ such that each neighbourhood of $x$ intersects
infinitely many members the family $\mathscr{F}'$. Suppose that $x\in U=
B_{\lambda_{j}}^0(S_j)\setminus\{0\}$ for some index $j\not=i$.
By Proposition~\ref{proposition-2.2}, $U$ is an open subset of $S$. But $U\cap U_\alpha=\varnothing$  for each member $U_\alpha$ of the family $\mathscr{F}'$.
Hence $x\in B_{\lambda_{i}}^0(S_i)$, a contradiction. Thus the family $\mathscr{F}'$ is not locally finite in $B_{\lambda_{i}}^0(S_i)$.

$(ii)$ Since the semigroup operation in $S$ is separately continuous the map $f_{\alpha_i,\beta_i}\colon S\rightarrow S\colon x\mapsto (\alpha_i,1_{S_i},\alpha_i)\cdot x\cdot(\beta_i,1_{S_i},\beta_i)$ is continuous too, and hence  $(S_i)_{\alpha_i,\beta_i}$ is a feebly compact subspace of $S$ as a continuous image of a feebly compact space.
\end{proof}

Proposition~\ref{proposition-2.6} implies the following corollary:

\begin{corollary}\label{corollary-2.7}
Let $S$ be a Hausdorff primitive inverse feebly compact semitopological semigroup and $S$ be an orthogonal sum of the family $\{B_{\lambda_{i}}(G_i)\colon i\in\mathscr{I}\}$ of semitopological Brandt semigroups with zeros. Then
\begin{itemize}
  \item[$(i)$] every maximal Brandt semigroup $B_{\lambda_{i}}(G_i)$, $i\in\mathscr{I}$, is feebly compact;
  \item[$(ii)$] $(G_i)_{\alpha_i,\beta_i}^0$ is feebly compact for all $\alpha_i,\beta_i\in \lambda_i$.
\end{itemize}
\end{corollary}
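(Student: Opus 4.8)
The plan is to obtain both assertions directly from Proposition~\ref{proposition-2.6}, by recognizing each maximal Brandt semigroup $B_{\lambda_i}(G_i)$ as a topological Brandt $\lambda_i^0$-extension of a semitopological monoid with zero. The underlying algebraic fact is that the Brandt semigroup of a group is the same object as the Brandt $\lambda^0$-extension of that group with an adjoined zero. Writing $M_i=G_i^0$ for the group $G_i$ with an adjoined zero $\mathscr{O}$, the ideal $\mathcal{J}=\{0\}\cup\{(\alpha,\mathscr{O},\beta)\colon\alpha,\beta\in\lambda_i\}$ collapses to a point when one passes to $B_{\lambda_i}^0(M_i)=B_{\lambda_i}(M_i)/\mathcal{J}$, and the resulting semigroup is, as an abstract semigroup, exactly $B_{\lambda_i}(G_i)$.

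To upgrade this to a statement about semitopological semigroups I would fix an index $\gamma_i\in\lambda_i$ and realize $M_i$ concretely as the diagonal block $\{(\gamma_i,g,\gamma_i)\colon g\in G_i\}\cup\{0\}$ carrying the subspace topology induced from $S$. This is a subsemigroup of $B_{\lambda_i}(G_i)$, hence a semitopological monoid with zero, with identity $(\gamma_i,e_i,\gamma_i)$ and zero $0$. Transporting the topology of $B_{\lambda_i}(G_i)$ through the isomorphism above, condition~(a) of Definition~\ref{def2} holds because $B_{\lambda_i}(G_i)$ is a semitopological semigroup by hypothesis, while condition~(b) holds with $\alpha=\gamma_i$, the natural homeomorphism being (after this identification) the identity map on the diagonal block. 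Consequently $S$ is an orthogonal sum of the family $\{B_{\lambda_i}^0(M_i)\colon i\in\mathscr{I}\}$ of topological Brandt $\lambda_i^0$-extensions of semitopological monoids with zeros, which is precisely the hypothesis of Proposition~\ref{proposition-2.6}.

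Assertion~$(i)$ now follows at once from Proposition~\ref{proposition-2.6}$(i)$, since the orthogonal summands $B_{\lambda_i}^0(M_i)=B_{\lambda_i}(G_i)$ are exactly the maximal Brandt subsemigroups of $S$. For assertion~$(ii)$ I would invoke Proposition~\ref{proposition-2.6}$(ii)$ and then translate the block notation: since $M_i$ contains the zero, the block $(M_i)_{\alpha_i,\beta_i}$ is formed by the ``$0\in A$'' branch of the convention for $A_{\alpha,\beta}$, and under the canonical isomorphism $B_{\lambda_i}^0(M_i)\cong B_{\lambda_i}(G_i)$ it corresponds to $\{(\alpha_i,g,\beta_i)\colon g\in G_i\}\cup\{0\}=(G_i)_{\alpha_i,\beta_i}^0$; hence its pseudocompactness is exactly the desired pseudocompactness of $(G_i)_{\alpha_i,\beta_i}^0$. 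The only point requiring care --- and the one I would check in detail --- is this bookkeeping between the two conventions for the subscript $A_{\alpha,\beta}$ together with the correct matching of the single overall zero of $S$; there is no further topological content, as all of it already resides in Proposition~\ref{proposition-2.6}.
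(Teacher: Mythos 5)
Your proposal is correct and is essentially the paper's own argument: the paper deduces this corollary directly from Proposition~2.6, leaving implicit exactly the identification $B_{\lambda_i}(G_i)\cong B^0_{\lambda_i}(G_i^0)$ (with $G_i^0$ carrying the subspace topology of a diagonal block of $S$) that you spell out. The notational bookkeeping you flag, namely $(M_i)_{\alpha_i,\beta_i}=\{(\alpha_i,g,\beta_i)\colon g\in G_i\}\cup\{0\}=(G_i)_{\alpha_i,\beta_i}^0$, is the whole content of the deduction, and you have it right.
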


\begin{proposition}\label{proposition-2.8}
Let $S$ be a semiregular feebly compact semitopological semigroup such that $S$ is an orthogonal sum of the family $\{B_{\lambda_{i}}^0(S_i)\colon i\in\mathscr{I}\}$ of topological Brandt $\lambda^0_i$-extensions of semitopological monoids with zeros. Then for every open neighbourhood $U(0)$ of zero $0$ in $S$ the set of pairs of indices $(\alpha_i,\beta_i)$ such that $(S_i)_{\alpha_i,\beta_i}\nsubseteq U(0)$ is finite.
\end{proposition}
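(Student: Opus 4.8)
The plan is to argue by contradiction in the spirit of Proposition~\ref{proposition-2.4}, but to replace the accumulation-point argument there (which genuinely needs countable compactness) by the open-family characterization of pseudocompactness, with semiregularity supplying the extra room. So I would suppose that there is an open neighbourhood $U(0)$ of $0$ in $S$ such that $(S_i)_{\alpha_i,\beta_i}\nsubseteq U(0)$ for infinitely many pairs of indices $(\alpha_i,\beta_i)$. Recall that a Hausdorff space is pseudocompact if and only if every locally finite family of non-empty open subsets is finite; the whole goal is therefore to manufacture, out of these infinitely many ``large'' blocks, an infinite locally finite family of non-empty open sets and thereby contradict pseudocompactness of $S$.

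First I would use semiregularity to choose a regular open neighbourhood $W$ of $0$ with $0\in W\subseteq U(0)$, which is possible since the regular open sets form a base. Then, for each pair with $(S_i)_{\alpha_i,\beta_i}\nsubseteq U(0)$, I would set $O_{\alpha_i,\beta_i}=(S_i)_{\alpha_i,\beta_i}^*\setminus\operatorname{cl}_S(W)$, recalling from Proposition~\ref{proposition-2.2} that $(S_i)_{\alpha_i,\beta_i}^*$ is open. The key point is that each such $O_{\alpha_i,\beta_i}$ is non-empty: if on the contrary $(S_i)_{\alpha_i,\beta_i}^*\subseteq\operatorname{cl}_S(W)$, then, being open, it would be contained in $\operatorname{int}_S(\operatorname{cl}_S(W))=W\subseteq U(0)$; since $0\in U(0)$ this forces $(S_i)_{\alpha_i,\beta_i}\subseteq U(0)$, contrary to the choice of the block. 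This dichotomy is exactly where regular openness of $W$ is indispensable, since for an arbitrary neighbourhood one could not conclude that the trimmed set is non-empty, and I expect this to be the main obstacle of the proof.

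Finally I would verify that the infinite family $\mathcal{O}=\{O_{\alpha_i,\beta_i}\colon (S_i)_{\alpha_i,\beta_i}\nsubseteq U(0)\}$ is locally finite. At the point $0$ the neighbourhood $W$ meets no member of $\mathcal{O}$, because $O_{\alpha_i,\beta_i}\cap\operatorname{cl}_S(W)=\varnothing$ while $W\subseteq\operatorname{cl}_S(W)$. At a non-zero point $y$, the unique block $(S_j)_{\gamma,\delta}^*$ containing it is, by Proposition~\ref{proposition-2.2}, an open neighbourhood of $y$ that, by disjointness of the blocks, meets at most one member of $\mathcal{O}$. As every point of $S$ is either $0$ or lies in exactly one block $(S_j)_{\gamma,\delta}^*$, the family $\mathcal{O}$ is locally finite; it is also infinite and consists of non-empty open sets. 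This contradicts the pseudocompactness of $S$, and the obtained contradiction proves the statement.
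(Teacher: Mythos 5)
Your proposal is correct and follows essentially the same route as the paper's proof: choose a regular open neighbourhood inside $U(0)$, trim each offending block $(S_i)_{\alpha_i,\beta_i}^*$ by removing the closure of that neighbourhood, use regular openness together with Proposition~\ref{proposition-2.2} to see the trimmed sets are non-empty, and conclude by local finiteness versus pseudocompactness. The only cosmetic differences are that you phrase it as a contradiction and spell out the local finiteness check that the paper leaves as ``easily checked.''
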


\begin{proof}
Since the semigroup $S$ is semiregular, there exists a regular open neighbourhood
$V(0)$ of the zero $0$ in $S$ such that $V(0)\subset U(0)$.
Let $\mathscr{A}=\left\{(\alpha_i,\beta_i)\colon (S_i)_{\alpha_i,\beta_i}\nsubseteq V(0)\right\}$. Let $(\alpha_i,\beta_i)\in \mathscr{A}$ be an arbitrary pair.
The set $(S_i)_{\alpha_i,\beta_i}'=(S_i)_{\alpha_i,\beta_i}^*\setminus
\operatorname{cl}_{S} V(0)$ is a non-empty open subset of the topological space $S$. Indeed, in the opposite case $(S_i)_{\alpha_i,\beta_i}\subseteq\operatorname{cl}_{S} V(0)$ and since by Proposition~\ref{proposition-2.2} the set  $(S_i)_{\alpha_i,\beta_i}^*$ is open and the set $V(0)$ is regular open,
we have $(S_i)_{\alpha_i,\beta_i}\subseteq \operatorname{int}_{S} (\operatorname{cl}_{S}(V(0)))=V(0)$, a contradiction. One can easily check that the family $\mathscr{P}=\{(S_i)_{\alpha_i,\beta_i}'\colon (\alpha_i,\beta_i)\in \mathscr{A}\}$ is a locally finite family of open subsets of the
topological space $S$. Since $S$ is feebly compact, the family $\mathscr{P}$ is finite, so the family $\mathscr{A}$ is finite too.
\end{proof}

Proposition~\ref{proposition-2.8} implies the following:

\begin{corollary}\label{corollary-2.9}
Let $S$ be a semiregular primitive inverse feebly compact semitopological semigroup and $S$ be an orthogonal sum of the family $\{B_{\lambda_{i}}(G_i)\colon i\in\mathscr{I}\}$ of semitopological Brandt semigroups with zeros. Then for every open neighbourhood $U(0)$ of zero $0$ in $S$ the set of pairs of indices $(\alpha_i,\beta_i)$ such that $(G_i)_{\alpha_i,\beta_i}\nsubseteq U(0)$ is finite.
\end{corollary}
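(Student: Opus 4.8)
The plan is to deduce this corollary directly from Proposition~\ref{proposition-2.8}; the only substantive work is to recast the given orthogonal sum of Brandt semigroups as an orthogonal sum of Brandt $\lambda^0$-extensions so that the proposition applies. To this end I would first recall that for a group $G_i$ the classical Brandt semigroup $B_{\lambda_i}(G_i)=(\lambda_i\times G_i\times\lambda_i)\cup\{0\}$ coincides, both algebraically and as a topological space, with the Brandt $\lambda^0_i$-extension $B^0_{\lambda_i}(G_i^0)$ of the monoid $G_i^0$ obtained by adjoining a zero to $G_i$. Indeed, $G_i^0$ is a monoid with zero, and factoring $B_{\lambda_i}(G_i^0)$ by the ideal $\mathcal J=\{0\}\cup\{(\alpha_i,0_{G_i^0},\beta_i)\colon\alpha_i,\beta_i\in\lambda_i\}$ collapses precisely the ``zero entries'' to a single point, leaving $(\lambda_i\times G_i\times\lambda_i)\cup\{0\}$ with the Brandt multiplication. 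Thus each semitopological Brandt semigroup $B_{\lambda_i}(G_i)$ is a topological Brandt $\lambda^0_i$-extension of the semitopological monoid $G_i^0$ with zero, and $S$ is an orthogonal sum of such extensions.

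Next I would note that the remaining hypotheses of the corollary --- $S$ Hausdorff, semiregular and pseudocompact --- are exactly those demanded by Proposition~\ref{proposition-2.8}. Applying that proposition yields, for every open neighbourhood $U(0)$ of the zero, the finiteness of the set of pairs $(\alpha_i,\beta_i)$ with $(G_i^0)_{\alpha_i,\beta_i}\nsubseteq U(0)$. It then remains only to match the index notation: since the group $G_i$ contains no zero, the convention gives $(G_i^0)_{\alpha_i,\beta_i}=(G_i)_{\alpha_i,\beta_i}\cup\{0\}$, and because $0\in U(0)$ the inclusions $(G_i^0)_{\alpha_i,\beta_i}\subseteq U(0)$ and $(G_i)_{\alpha_i,\beta_i}\subseteq U(0)$ are equivalent. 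Hence the two index sets coincide and the asserted finiteness follows.

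I do not expect a genuine obstacle, as the statement is a corollary by design and mirrors the deductions of Corollaries~\ref{corollary-2.3}, \ref{corollary-2.5} and \ref{corollary-2.7} from their respective propositions. The single point deserving care is to confirm that the identification $B_{\lambda_i}(G_i)\cong B^0_{\lambda_i}(G_i^0)$ is a homeomorphism of semitopological semigroups, and in particular that condition~(b) of Definition~\ref{def2} holds --- namely that the diagonal subspace $(G_i^0)_{\alpha_i,\alpha_i}$ is naturally homeomorphic to $G_i^0$ --- so that a semitopological Brandt semigroup with zero legitimately qualifies as a topological Brandt $\lambda^0$-extension in the sense required by Proposition~\ref{proposition-2.8}. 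This is immediate from the construction but should be recorded explicitly.
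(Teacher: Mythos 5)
Your proposal is correct and takes essentially the same route as the paper: the paper states Corollary~\ref{corollary-2.9} as an immediate consequence of Proposition~\ref{proposition-2.8}, and your elaboration---identifying each semitopological Brandt semigroup $B_{\lambda_i}(G_i)$ with the topological Brandt $\lambda^0_i$-extension $B^0_{\lambda_i}(G_i^0)$ of the monoid $G_i^0$ and then reconciling the notation $(G_i^0)_{\alpha_i,\beta_i}=(G_i)_{\alpha_i,\beta_i}\cup\{0\}$ using $0\in U(0)$---is precisely the intended deduction, mirroring how Corollaries~\ref{corollary-2.3}, \ref{corollary-2.5} and \ref{corollary-2.7} follow from their propositions.
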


The structure of primitive Hausdorff feebly compact topological inverse semigroup is described in \cite{GutikPavlyk2013??}. It is proved that every primitive Hausdorff feebly compact topological inverse semigroup $S$ is topologically isomorphic to the orthogonal sum $\sum_{i\in\mathscr{I}}B_{\lambda_{i}}(G_i)$ of topological Brandt $\lambda_i$-extensions $B_{\lambda_i}(G_i)$ of pseudocompact topological groups $G_i$ in the class of topological inverse semigroups for some finite cardinals $\lambda_i\geqslant 1$. Also, \cite{GutikPavlyk2013??} contains a description of a base of the topology of a primitive Hausdorff feebly compact topological inverse semigroup. Similar results for primitive Hausdorff countably compact topological inverse semigroups and Hausdorff compact topological inverse semigroups were obtained in \cite{BerezovskiGutikPavlyk2010}.

The following example shows that counterparts of these results do not hold for  primitive Hausdorff compact (and hence countably compact and feebly compact) semitopological inverse semigroups with continuous inversion.

\begin{example}\label{example-2.10}
Let $\mathbb{Z}(+)$ be the discrete additive group of integers and $\mathcal{O}\notin\mathbb{Z}(+)$. We put $Z^0$ to be $\mathbb{Z}(+)$ with adjoined zero $\mathcal{O}$ and consider the topology of the one-point Alexandroff compactification on $Z^0$ with the remainder $\{\mathcal{O}\}$. Simple verifications show that $Z^0$ is a Hausdorff compact semitopological inverse semigroup with continuous inversion.

We fix an arbitrary cardinal $\lambda\geqslant 1$. Define a topology $\tau_B$ on $B^0_{\lambda}(Z^0)$ as follows:
\begin{itemize}
  \item[$(i)$] all non-zero elements of $B^0_{\lambda}(Z^0)$ are isolated points;
  \item[$(ii)$] the family $\mathscr{P}(0)=\{U(\alpha,\beta,n)\colon \alpha,\beta\in\lambda, \; n\in\mathbb{N}\}$, where
\begin{equation*}
U(\alpha,\beta,n)=B^0_{\lambda}(Z^0)\setminus (\{-n,-n+1,\ldots,n-1,n\})_{\alpha,\beta},
\end{equation*}
forms a pseudobase of the topology $\tau_B$ at zero.
\end{itemize}
Simple verifications show that $(B^0_{\lambda}(Z^0),\tau_B)$ is a Hausdorff compact semitopological inverse semigroup with continuous inversion, and moreover the space $(B^0_{\lambda}(Z^0),\tau_B)$ is homeomorphic to the one-point Alexandroff compactification of the discrete space of cardinality $\max\{\lambda,\omega\}$ with the remainder zero of the semigroup $B^0_{\lambda}(Z^0)$.
\end{example}

\begin{theorem}\label{theorem-2.11}
Let $S$ be a Hausdorff primitive inverse countably compact semitopological semigroup and $S$ be an orthogonal sum of the family $\{B_{\lambda_{i}}(G_i)\colon i\in\mathscr{I}\}$ of semitopological Brandt semigroups with zeros. Suppose that for every $i\in\mathscr{I}$ there exists a maximal non-zero subgroup $(G_i)_{\alpha_i,\alpha_i}$, $\alpha_i\in\lambda_i$, such that at least the one of the following conditions holds:
\begin{itemize}
  \item[(1)] the group $(G_i)_{\alpha_i,\alpha_i}$ is left precompact;
  \item[(2)] $(G_i)_{\alpha_i,\alpha_i}$ is a feebly compact paratopological group;
  \item[(3)] the group $(G_i)_{\alpha_i,\alpha_i}$ is left $\omega$-precompact feebly compact;
  \item[(4)] the semigroup $S_{\alpha_i,\alpha_i}= (G_i)_{\alpha_i,\alpha_i}\cup\{0\}$ is a topological semigroup.
\end{itemize}
Then the following assertions hold:
\begin{enumerate}
  \item[$(i)$] every maximal subgroup of $S$ is a closed subset of $S$ and hence is countably compact;
  \item[$(ii)$] for every $i\in\mathscr{I}$ the maximal Brandt semigroup $B_{\lambda_{i}}(G_i)$ is a countably compact topological Brandt $\lambda$-extension of a countably compact semitopological group $G_i$;
  \item[$(iii)$] if $\mathscr{B}_{(\alpha_i,e_{i},\alpha_i)}$ is a base of the topology at the unit $(\alpha_i,e_{i},\alpha_i)$ of a maximal non-zero subgroup $(G_i)_{\alpha_i,\alpha_i}$ of $S$, $i\in\mathscr{I}$, such that $U\subseteq (G_i)_{\alpha_i,\alpha_i}$ for any $U\in \mathscr{B}_{(\alpha_i,e_{i},\alpha_i)}$, then the family
      \begin{equation*}
        \mathscr{B}_{(\beta_i,x,\gamma_i)}=\left\{(\beta_i,x,\alpha_i)\cdot U\cdot (\alpha_i,e_i,\gamma_i)\colon U\in \mathscr{B}_{(\alpha_i,e_{i},\alpha_i)} \right\}
      \end{equation*}
      is a base of the topology of $S$ at the point $(\beta_i,x,\gamma_i)\in (G_i)_{\beta_i,\gamma_i}\subseteq B_{\lambda_{i}}(G_i)$, for all $\beta_i,\gamma_i\in \lambda_i$;
  \item[$(iv)$] the family
      \begin{equation*}
      \begin{split}
        \mathscr{B}_{0}= \big\{S\setminus& \big((G_{i_1})_{\alpha_{i_1},\beta_{i_1}}\cup \cdots\cup (G_{i_k})_{\alpha_{i_k},\beta_{i_k}}\big)\colon i_1,\ldots,i_k\in\mathscr{I}, \alpha_{i_k},\beta_{i_k}\in\lambda_{i_k},\\
          & k\in\mathbb{N},  \{(\alpha_{i_1},\beta_{i_1}),\ldots,(\alpha_{i_k},\beta_{i_k})\} \hbox{~is finite}\big\}
      \end{split}
      \end{equation*}
      is a base of the topology at zero of $S$.
\end{enumerate}
\end{theorem}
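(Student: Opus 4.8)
The plan is to reduce the whole theorem to a single application of Lemma~\ref{lemma-2.1} on one suitably chosen subsemigroup, and then to propagate the resulting ``isolated zero'' conclusion across all of $S$ by means of the partial translations supplied by the idempotents of the Brandt semigroups. First I would fix $i\in\mathscr{I}$ together with the distinguished index $\alpha_i$ and single out the subsemigroup $S_{\alpha_i,\alpha_i}=(G_i)_{\alpha_i,\alpha_i}\cup\{0\}$. Its complement in the maximal Brandt semigroup $B_{\lambda_i}(G_i)$ equals $\bigcup_{(\beta,\gamma)\neq(\alpha_i,\alpha_i)}(G_i)_{\beta,\gamma}$, which is open by Corollary~\ref{corollary-2.3}$(i)$, so $S_{\alpha_i,\alpha_i}$ is closed in $B_{\lambda_i}(G_i)$; since the latter is countably compact by Corollary~\ref{corollary-2.5}, the closed subspace $S_{\alpha_i,\alpha_i}$ is countably compact and hence pseudocompact. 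In $S_{\alpha_i,\alpha_i}$ the point $0$ is a two-sided (in particular right) zero and $(G_i)_{\alpha_i,\alpha_i}=S_{\alpha_i,\alpha_i}\setminus\{0\}$ is a subgroup, so the hypotheses of Lemma~\ref{lemma-2.1} are met. Conditions (1)--(3) of the theorem are literally conditions (1)--(3) of Lemma~\ref{lemma-2.1} for $G=(G_i)_{\alpha_i,\alpha_i}$, while condition (4), combined with the pseudocompactness of $S_{\alpha_i,\alpha_i}$ just established, is condition (4) of Lemma~\ref{lemma-2.1}. In every case the lemma yields that $0$ is an isolated point of $S_{\alpha_i,\alpha_i}$.

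Next I would transport this isolation to every block of $B_{\lambda_i}(G_i)$. For $\beta,\gamma\in\lambda_i$ consider the map $\Phi_{\beta,\gamma}\colon S\to S$, $\Phi_{\beta,\gamma}(x)=(\beta,e_i,\alpha_i)\cdot x\cdot(\alpha_i,e_i,\gamma)$, which is continuous by separate continuity of the operation. A direct computation shows that $\Phi_{\beta,\gamma}$ sends $(\alpha_i,g,\alpha_i)\mapsto(\beta,g,\gamma)$, fixes $0$, and collapses every other element of $S$ to $0$; its restriction to $S_{\alpha_i,\alpha_i}$ is a continuous bijection onto $(G_i)_{\beta,\gamma}\cup\{0\}$ whose inverse is the restriction of $\Phi'_{\beta,\gamma}(x)=(\alpha_i,e_i,\beta)\cdot x\cdot(\gamma,e_i,\alpha_i)$, hence a homeomorphism carrying $0$ to $0$. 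Consequently $0$ is an isolated point of $(G_i)_{\beta,\gamma}\cup\{0\}$ for all $\beta,\gamma$. Since by Corollary~\ref{corollary-2.3}$(i)$ every block $(G_i)_{\beta,\gamma}$ is open and distinct blocks are pairwise separated, the closure of $(G_i)_{\beta,\gamma}$ in $S$ can contain no point outside $(G_i)_{\beta,\gamma}$ except possibly $0$; the isolation of $0$ rules this out, so every block $(G_i)_{\beta,\gamma}$ is closed in $S$. In particular each maximal subgroup $(G_i)_{\beta,\beta}$ is closed, and being a closed subspace of the countably compact $S$ it is countably compact, which proves $(i)$.

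Then I would read off assertion $(iii)$ from the same maps, now restricted to the open blocks rather than to $S_{\alpha_i,\alpha_i}$. For $x\in G_i$ the map $z\mapsto(\beta_i,x,\alpha_i)\cdot z\cdot(\alpha_i,e_i,\gamma_i)$ restricts to a homeomorphism of the open subspace $(G_i)_{\alpha_i,\alpha_i}$ onto the open subspace $(G_i)_{\beta_i,\gamma_i}$ (its inverse being $z\mapsto(\alpha_i,x^{-1},\beta_i)\cdot z\cdot(\gamma_i,e_i,\alpha_i)$) and takes the unit $(\alpha_i,e_i,\alpha_i)$ to $(\beta_i,x,\gamma_i)$. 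A homeomorphism carries a neighbourhood base at a point to a neighbourhood base at its image, and since $(G_i)_{\beta_i,\gamma_i}$ is open in $S$ these are neighbourhood bases in $S$; this is exactly the family $\mathscr{B}_{(\beta_i,x,\gamma_i)}$, giving $(iii)$. For $(ii)$ I would assemble the pieces already in hand: $B_{\lambda_i}(G_i)$ is countably compact by Corollary~\ref{corollary-2.5}; the subgroup $G_i\cong(G_i)_{\alpha_i,\alpha_i}$ is closed in $S$ by the previous paragraph, hence countably compact, and it is a semitopological group as a subsemigroup of $S$; and $B_{\lambda_i}(G_i)$ is a semitopological semigroup whose diagonal block $(G_i)_{\alpha_i,\alpha_i}$ is naturally homeomorphic to $G_i$, so it is a topological Brandt $\lambda$-extension of $G_i$ in the sense of Definition~\ref{def1}.

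Finally, for $(iv)$, the openness of each member of $\mathscr{B}_0$ is immediate from the preceding paragraphs: a finite union of the closed blocks $(G_{i_j})_{\alpha_{i_j},\beta_{i_j}}$ is closed, so its complement is an open set containing $0$. For the converse I would invoke Corollary~\ref{corollary-2.5}: given an open neighbourhood $U(0)$ of $0$, only finitely many blocks $(G_{i_1})_{\alpha_{i_1},\beta_{i_1}},\dots,(G_{i_k})_{\alpha_{i_k},\beta_{i_k}}$ fail to lie in $U(0)$, and then the corresponding member $S\setminus\big((G_{i_1})_{\alpha_{i_1},\beta_{i_1}}\cup\cdots\cup(G_{i_k})_{\alpha_{i_k},\beta_{i_k}}\big)$ of $\mathscr{B}_0$ is contained in $U(0)$, because any non-zero point of it lies in a block that was not excluded and is therefore contained in $U(0)$. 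The main obstacle lies in the first two paragraphs: one must verify that the chosen subsemigroup $S_{\alpha_i,\alpha_i}$ is pseudocompact so that Lemma~\ref{lemma-2.1}(4) becomes available, and that the globally continuous maps $\Phi_{\beta,\gamma}$ genuinely restrict to homeomorphisms of the relevant pointed subspaces, since it is precisely the propagation of the isolation of $0$ that delivers closedness of every block; once all blocks are known to be simultaneously open and closed, assertions $(i)$--$(iv)$ follow as indicated.
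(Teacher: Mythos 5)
Your proposal is correct and follows essentially the same route as the paper: establish that the translation maps $x\mapsto(\beta,e_i,\alpha_i)\cdot x\cdot(\alpha_i,e_i,\gamma)$ give homeomorphisms between the pointed blocks $S_{\beta,\gamma}=(G_i)_{\beta,\gamma}\cup\{0\}$, apply Lemma~\ref{lemma-2.1} to the distinguished diagonal block to isolate $0$, and then deduce closedness of all blocks, countable compactness, and the bases in $(iii)$ and $(iv)$ via Corollaries~\ref{corollary-2.3} and~\ref{corollary-2.5}. The only (immaterial) deviation is that, to feed pseudocompactness into case (4) of the lemma, you realize $S_{\alpha_i,\alpha_i}$ as a closed subspace of the countably compact $B_{\lambda_i}(G_i)$, whereas the paper obtains it as a retract (continuous image) of $S$; both are valid.
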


\begin{proof}
$(i)$ Fix an arbitrary maximal subgroup $G$ of $S$. Without loss of generality we can assume that $G$ is a non-zero subgroup of $S$. Then there exists a maximal Brand subsemigroup $B_{\lambda_{i}}(G_i)$, $i\in\mathscr{I}$, which contains $G$. The separate continuity of multiplication in $S$ implies that for all $\alpha_i,\beta_i,\gamma_i,\delta_i\in\lambda_i$ the map $\psi^{\alpha_i,\beta_i}_{\gamma_i,\delta_i}\colon S\to S$ defined by the formula $\psi^{\alpha_i,\beta_i}_{\gamma_i,\delta_i}(x)=(\gamma_i,e_i,\alpha_i)\cdot x\cdot(\beta_i,e_i,\delta_i)$, where $e_i$ is unit of the group $G_i$, is continuous. Since for all $\alpha_i,\beta_i,\gamma_i,\delta_i\in\lambda_i$ the restrictions $\psi^{\alpha_i,\beta_i}_{\gamma_i,\delta_i}|_{(G_i)_{\alpha_i,\beta_i}}\colon (G_i)_{\alpha_i,\beta_i}\to (G_i)_{\gamma_i,\delta_i}$ and $\psi^{\gamma_i,\delta_i}_{\alpha_i,\beta_i}|_{(G_i)_{\gamma_i,\delta_i}}\colon (G_i)_{\gamma_i,\delta_i}\to (G_i)_{\alpha_i,\beta_i}$ are bijective continuous maps we conclude that $(G_i)_{\alpha_i,\beta_i}$ and $(G_i)_{\gamma_i,\delta_i}$ are homeomorphic subspaces of $S$, and moreover the semitopological subgroups $(G_i)_{\alpha_i,\alpha_i}$ and $(G_i)_{\gamma_i,\gamma_i}$ are topologically isomorphic for all indices $\alpha_i,\gamma_i\in\lambda_i$. Therefore $G$ is topologically isomorphic to the semitopological subgroup $(G_i)_{\alpha_i,\alpha_i}$ for any $\alpha_i\in\lambda_i$. For any $\alpha_i,\beta_i\in\lambda_i$ we put $S_{\alpha_i,\beta_i}=(G_i)_{\alpha_i,\beta_i} \cup\{0\}$. Then for all $\alpha_i,\beta_i,\gamma_i,\delta_i\in\lambda_i$ the restrictions $\psi^{\alpha_i,\beta_i}_{\gamma_i,\delta_i}|_{S_{\alpha_i,\beta_i}}\colon S_{\alpha_i,\beta_i}\to S_{\gamma_i,\delta_i}$ and $\psi^{\gamma_i,\delta_i}_{\alpha_i,\beta_i}|_{S_{\gamma_i,\delta_i}}\colon S_{\gamma_i,\delta_i}\to S_{\alpha_i,\beta_i}$ are bijective continuous maps, and hence $S_{\alpha_i,\beta_i}$ and $S_{\gamma_i,\delta_i}$ are homeomorphic subspaces of $S$, and moreover the semitopological subsemigroups $S_{\alpha_i,\alpha_i}$ and $S_{\gamma_i,\gamma_i}$ are topologically isomorphic for all indices $\alpha_i,\gamma_i,\in\lambda_i$.  Now Lemma~\ref{lemma-2.1} implies that $0$ is an isolated point in $S_{\alpha_i,\alpha_i}$. Indeed, if one of Conditions (1)-(3) of the theorem is satisfied then we can directly apply Lemma~\ref{lemma-2.1} and if Condition (4) of the theorem is satisfied then we observe that for each $\lambda_i$ and $\alpha_i\in\lambda_i$ the subsemigroup $S_{\alpha_i,\alpha_i}$ of $S$ is countably compact as a retract of $S$ and hence $S_{\alpha_i,\alpha_i}$ is feebly compact and  then again Lemma~\ref{lemma-2.1} applies. By Corollary~\ref{corollary-2.3}, $(G_i)_{\alpha_i,\alpha_i}$ is a closed subspace of $S$ and by Theorem~3.10.4 from \cite{Engelking1989} $(G_i)_{\alpha_i,\alpha_i}$ is countably compact, and hence so is $G$, too.

$(ii)$ The arguments presented in the proof of the assertion $(i)$ imply that for every $i\in\mathscr{I}$ the maximal Brandt semigroup $B_{\lambda_{i}}(G_i)$ is a topological Brandt $\lambda$-extension of a countably compact semitopological group $G_i$. By Corollary~\ref{corollary-2.3} we have that for every $i\in\mathscr{I}$ the maximal Brandt semigroup $B_{\lambda_{i}}(G_i)$ is a closed subset of $S$ and by Theorem~3.10.4 from \cite{Engelking1989} $B_{\lambda_{i}}(G_i)$ is countably compact.

Assertion $(iii)$ follows from $(ii)$.

$(iv)$ follows from Corollary~\ref{corollary-2.5} and assertions $(i)$ and $(ii)$.
\end{proof}

The proof of the following theorem is similar to the proof of Theorem~\ref{theorem-2.11} and makes use of Corollary~\ref{corollary-2.7} and Proposition~\ref{proposition-2.8}.

\begin{theorem}\label{theorem-2.12}
Let $S$ be a Hausdorff primitive inverse feebly compact semitopological semigroup and $S$ be an orthogonal sum of the family $\{B_{\lambda_{i}}(G_i)\colon i\in\mathscr{I}\}$ of semitopological Brandt semigroups with zeros. Suppose that for every $i\in\mathscr{I}$ there exists a maximal non-zero subgroup $(G_i)_{\alpha_i,\alpha_i}$, $\alpha_i\in\lambda_i$, such that at least the one of the following conditions holds:
\begin{itemize}
  \item[(1)] the group $(G_i)_{\alpha_i,\alpha_i}$ is left precompact;
  \item[(2)] $(G_i)_{\alpha_i,\alpha_i}$ is a feebly compact paratopological group;
  \item[(3)] the group $(G_i)_{\alpha_i,\alpha_i}$ is left $\omega$-precompact feebly compact;
  \item[(4)] the semigroup $S_{\alpha_i,\alpha_i}= (G_i)_{\alpha_i,\alpha_i}\cup\{0\}$ is a topological semigroup.
\end{itemize}
Then the following assertions hold:
\begin{enumerate}
  \item[$(i)$] every maximal subgroup of $S$ is an open-and-closed subset of $S$ and hence is pseudocompact;
  \item[$(ii)$] for every $i\in\mathscr{I}$ the maximal Brandt semigroup $B_{\lambda_{i}}(G_i)$ is a feebly compact topological Brandt $\lambda$-extension of a feebly compact semitopological group $G_i$;
  \item[$(iii)$] if $\mathscr{B}_{(\alpha_i,e_{i},\alpha_i)}$ is a base of the topology at the unit $(\alpha_i,e_{i},\alpha_i)$ of a maximal non-zero subgroup $(G_i)_{\alpha_i,\alpha_i}$ of $S$, $i\in\mathscr{I}$, such that $U\subseteq (G_i)_{\alpha_i,\alpha_i}$ for any $U\in \mathscr{B}_{(\alpha_i,e_{i},\alpha_i)}$, then the family
      \begin{equation*}
        \mathscr{B}_{(\beta_i,x,\gamma_i)}=\left\{(\beta_i,x,\alpha_i)\cdot U\cdot (\alpha_i,e_i,\gamma_i)\colon U\in \mathscr{B}_{(\alpha_i,e_{i},\alpha_i)} \right\}
      \end{equation*}
      is a base of the topology at the point $(\beta_i,x,\gamma_i)\in (G_i)_{\beta_i,\gamma_i}\subseteq B_{\lambda_{i}}(G_i)$, for all $\beta_i,\gamma_i\in \lambda_i$;
\end{enumerate}
if in addition the topological space $S$ is semiregular then
\begin{enumerate}
  \item[$(iv)$] the family
      \begin{equation}\label{eq-2.1}
      \begin{split}
        \mathscr{B}_{0}= \big\{S\setminus& \big((G_{i_1})_{\alpha_{i_1},\beta_{i_1}}\cup \cdots\cup (G_{i_k})_{\alpha_{i_k},\beta_{i_k}}\big)\colon i_1,\ldots,i_k\in\mathscr{I}, \alpha_{i_k},\beta_{i_k}\in\lambda_{i_k}, k\in\mathbb{N}; \\
          & \hbox{if~} \left\{(\alpha_{i_1},\beta_{i_1}),\ldots,(\alpha_{i_k},\beta_{i_k})\right\} \hbox{~is finite}\big\}
      \end{split}
      \end{equation}
      is a base of the topology at zero of $S$.
\end{enumerate}
\end{theorem}

The following example shows that in the case of primitive Hausdorff feebly compact semitopological inverse semigroups with compact maximal subgroups and continuous inversion the statement $(iii)$ of Theorem~\ref{theorem-2.12} doesn't hold.

\begin{example}\label{example-2.13}
Let $\lambda$ be an infinite cardinal and $\mathbb{T}$ be the unit circle with the usual multiplication of complex numbers and the usual topology $\tau_{\mathbb{T}}$. It is obvious that $(\mathbb{T},\tau_{\mathbb{T}})$ is a topological group. The base of the topology $\tau_B$ on the Brandt semigroup $B_\lambda(\mathbb{T})$ we define as follows:
\begin{enumerate}
  \item[1)] for every non-zero element $(\alpha,x,\beta)$ of the semigroup $B_\lambda(\mathbb{T})$ the family
      \begin{equation*}
        \mathscr{B}_{(\alpha,x,\beta)}=\left\{(\alpha,U(x),\beta)\colon U(x)\in \mathscr{B}_\mathbb{T}(x)\right\},
      \end{equation*}
      where $\mathscr{B}_\mathbb{T}(x)$ is a base of the topology $\tau_\mathbb{T}$ at the point $x\in \mathbb{T}$, is the base of the topology $\tau_{\mathbb{T}}$ at $(\alpha,x,\beta)\in B_\lambda(\mathbb{T})$;
  \item[2)] the family
      \begin{equation*}
        \mathscr{B}_{0}=\left\{U(\alpha_1,\beta_1;\ldots;\alpha_n,\beta_n; x_1,\ldots,x_k)\colon \alpha_1,\beta_1,\ldots,\alpha_n,\beta_n\in\lambda, x_1,\ldots,x_k\in\mathbb{T}, n,k\in\mathbb{N}\right\},
      \end{equation*}
      where
      \begin{equation*}
        U(\alpha_1,\beta_1;\ldots;\alpha_n,\beta_n; x_1,\ldots,x_k)= B_\lambda(\mathbb{T})\setminus\left(\mathbb{T}_{\alpha_1,\beta_1} \cup\cdots\cup\mathbb{T}_{\alpha_n,\beta_n}\cup\left\{(\alpha,x_i,\beta)\colon \alpha,\beta\in\lambda, i=1,\ldots,k\right\}\right),
      \end{equation*}
      is the base of the topology $\tau_{\mathbb{T}}$ at zero $0\in B_\lambda(\mathbb{T})$.
\end{enumerate}

Simple verifications show that $(B_\lambda(\mathbb{T}),\tau_B)$ is a non-semiregular Hausdorff feebly compact topological space for every infinite cardinal $\lambda$. Next we shall show that multiplication on $(B_\lambda(\mathbb{T}),\tau_B)$ is separately continuous. The proof of the separate continuity of multiplication in the cases $0\cdot 0$ and $(\alpha,x,\beta)\cdot(\gamma,y,\delta)$, where $\alpha,\beta,\gamma,\delta\in\lambda$ and $x,y\in \mathbb{T}$, is trivial, and hence we only consider the following cases:
\begin{equation*}
     (\alpha,x,\beta)\cdot 0 \qquad \hbox{and} \qquad 0\cdot(\alpha,x,\beta).
\end{equation*}

Then we have that
\begin{equation*}
\begin{split}
  (\alpha,x,\beta)\cdot &  U(\beta,\beta_1;\ldots;\beta,\beta_n;\alpha_1,\beta_1;\ldots;\alpha_n,\beta_n; x_1,\ldots,x_k)\subseteq \\
    \subseteq &\;\{0\}\cup \bigcup\left\{\mathbb{T}_{\alpha,\gamma}\setminus \left\{(\alpha,xx_1,\gamma),\ldots,(\alpha,xx_k,\gamma)\right\}\colon \gamma\in\lambda\setminus\{\beta_1,\ldots,\beta_n\}\right\}\subseteq \\
    \subseteq &\; U(\alpha,\beta_1;\ldots;\alpha,\beta_n;\alpha_1,\beta_1;\ldots;\alpha_n,\beta_n; xx_1,\ldots,xx_k)\subseteq \\
    \subseteq &\; U(\alpha_1,\beta_1;\ldots;\alpha_n,\beta_n; xx_1,\ldots,xx_k)
\end{split}
\end{equation*}
and similarly
\begin{equation*}
\begin{split}
  U&(\alpha_1,\alpha;\ldots;\alpha_n,\alpha;\alpha_1,\beta_1;\ldots;\alpha_n,\beta_n; x_1,\ldots,x_k)\cdot (\alpha,x,\beta)\subseteq \\
    &\; \subseteq \{0\}\cup \bigcup\left\{\mathbb{T}_{\gamma,\beta}\setminus \left\{(\gamma,x_1x,\beta),\ldots,(\gamma,x_kx,\beta)\right\}\colon \gamma\in\lambda\setminus\{\alpha_1,\ldots,\alpha_n\}\right\}\subseteq \\
    &\; \subseteq U(\alpha_1,\beta;\ldots;\alpha_n,\beta;\alpha_1,\beta_1;\ldots;\alpha_n,\beta_n; x_1 x,\ldots,x_k x)\subseteq \\
    &\; \subseteq U(\alpha_1,\beta_1;\ldots;\alpha_n,\beta_n; x_1 x,\ldots,x_k x),
\end{split}
\end{equation*}
for all $U(\alpha_1,\beta_1;\ldots;\alpha_n,\beta_n; xx_1,\ldots,xx_k), U(\alpha_1,\beta_1;\ldots;\alpha_n,\beta_n; x_1 x,\ldots,x_k x)\in\mathscr{B}_{0}$. This completes the proof of the separate continuity of multiplication in $(B_\lambda(\mathbb{T}),\tau_B)$.
\end{example}

\begin{proposition}\label{proposition-2.14}
The space $(B_\lambda(\mathbb{T}),\tau_B)$ is countably pracompact if and only if $\lambda\leqslant\mathfrak{c}$.
\end{proposition}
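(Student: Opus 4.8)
The plan is to first pin down, for the space $(B_\lambda(\mathbb{T}),\tau_B)$ of Example~\ref{example-2.13}, exactly which infinite sets of non-zero elements have an accumulation point. For a non-zero point $(\alpha,x,\beta)$ call $x$ its \emph{$\mathbb{T}$-coordinate}; within one slice $\mathbb{T}_{\alpha,\beta}$ distinct points have distinct $\mathbb{T}$-coordinates, and $\mathbb{T}_{\alpha,\beta}$ is a clopen copy of the compact circle. I would record the dichotomy: if an infinite set $B$ of non-zero points meets some slice in an infinite set, compactness of that slice already gives $B$ an accumulation point; otherwise each slice meets $B$ finitely, and a basic neighbourhood of $0$ (deleting finitely many slices and finitely many whole columns) misses all but finitely many points of $B$ precisely when some finite set of $\mathbb{T}$-coordinates carries cofinitely many points of $B$. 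Accordingly I call $B$ \emph{bad} if each slice meets $B$ finitely and some finite set of $\mathbb{T}$-coordinates carries all but finitely many of its points; a bad set is exactly an infinite set of non-zero points with no accumulation point in $S$.

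For sufficiency ($\lambda\leqslant\mathfrak{c}$) I would exhibit a dense set with no infinite bad subset. Let $Q$ be the countable dense subgroup $\mathbb{Q}/\mathbb{Z}$ of $\mathbb{T}$. Its cosets form a family of pairwise disjoint countable dense subsets of $\mathbb{T}$ (each is a translate of $Q$, hence dense), and there are $\mathfrak{c}$ of them, since $\mathbb{T}$ is their disjoint union, each is countable, and $|\mathbb{T}|=\mathfrak{c}>\aleph_0$. As $|\lambda\times\lambda|=\lambda\leqslant\mathfrak{c}$, I may assign injectively to each slice $(\alpha,\beta)$ a distinct coset $D_{\alpha,\beta}$ and put $A=\bigcup_{\alpha,\beta}\{(\alpha,x,\beta)\colon x\in D_{\alpha,\beta}\}$. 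Density inside each slice is immediate, and $0\in\operatorname{cl}_S(A)$ because every basic neighbourhood of $0$ retains infinitely many slices, on each of which $A$ is infinite and so survives the deletion of finitely many columns; hence $A$ is dense. By disjointness of the cosets, each $\mathbb{T}$-coordinate occurs in at most one slice, so any finite set of $\mathbb{T}$-coordinates carries only finitely many points of $A$; thus $A$ has no infinite bad subset, and $S$ is countably compact at $A$.

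For necessity I argue the contrapositive: if $\lambda>\mathfrak{c}$, no dense set witnesses countable pracompactness. Let $A$ be dense. Then $A\cap\mathbb{T}_{\alpha,\beta}$ is dense, hence infinite, in every slice, so each of the $\lambda$ slices carries infinitely many distinct $\mathbb{T}$-coordinates. Counting slice–coordinate incidences yields at least $\lambda\cdot\aleph_0=\lambda$ of them; were every $\mathbb{T}$-coordinate used by only finitely many slices, the total would be at most $\mathfrak{c}\cdot\aleph_0=\mathfrak{c}<\lambda$, a contradiction. Hence some $\mathbb{T}$-coordinate $x^{\ast}$ occurs in infinitely many slices $(\alpha_n,\beta_n)$, and $B=\{(\alpha_n,x^{\ast},\beta_n)\colon n\in\mathbb{N}\}\subseteq A$ meets each slice once while lying in the single column $x^{\ast}$, so it is bad and has no accumulation point. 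Therefore $S$ is not countably compact at $A$.

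The main obstacle is the sufficiency direction: I need one dense subset that is simultaneously dense in every circle-slice yet spreads its $\mathbb{T}$-coordinates so thinly that no finite column-set can swallow an infinite subset. The coset decomposition $\mathbb{T}=\bigsqcup_{x}(x+Q)$ supplies exactly such a point-finite (indeed pairwise disjoint) family of dense sets and is the key device. The residual checks are routine but deserve explicit mention: that $0$ really lies in $\operatorname{cl}_S(A)$, and that the basic neighbourhoods of $0$—which by their definition must delete at least one slice and at least one column—still detect the absence of an accumulation point for the bad sets constructed above.
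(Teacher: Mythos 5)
Your proof is correct and takes essentially the same route as the paper's: for sufficiency both choose a countable dense subgroup of $\mathbb{T}$ (the paper's $H$, your $\mathbb{Q}/\mathbb{Z}$) and assign its $\mathfrak{c}$ many pairwise disjoint dense cosets injectively to the slices, and for necessity both use the same incidence/pigeonhole count to extract one $\mathbb{T}$-coordinate occurring in infinitely many slices, yielding an infinite subset of $A$ with no accumulation point. Your explicit ``bad set'' dichotomy is just a spelled-out version of what the paper invokes implicitly with ``the definition of the topology $\tau_B$ implies''.
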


\begin{proof}
$(\Leftarrow)$ Suppose that $\lambda\leqslant\mathfrak{c}$. Then there exists a countable dense subgroup $H$ of $\mathbb{T}$. Let $\mathfrak{H}_H$ be the family of all distinct conjugate classes of subgroup $H$ in $\mathbb{T}$. Since the subgroup $H$ is countable we conclude that the cardinality of $\mathfrak{H}_H$ is $\mathfrak{c}$. This implies that there exists a one-to-one (not necessary bijective) map $f\colon \lambda\times\lambda\to \mathfrak{H}_H\colon (\alpha,\beta)\mapsto g_{\alpha,\beta}H$. Then by the definition of the topology $\tau_B$ we have that $A=\bigcup_{\alpha,\beta\in\lambda}(g_{\alpha,\beta}H)_{\alpha,\beta}$ is a dense subset of the topological space $(B_\lambda(\mathbb{T}),\tau_B)$. Fix an arbitrary infinite countable subset $Q$ of $A$. If the set $Q\cap\mathbb{T}_{\alpha,\beta}$ is infinite for some $\alpha,\beta\in\lambda$ then compactness of $\mathbb{T}$ implies that $Q$ has an accumulation point in $\mathbb{T}_{\alpha,\beta}$, and hence in $(B_\lambda(\mathbb{T}),\tau_B)$. In the other case the definition of the topology $\tau_B$ implies that zero $0$ is an accumulation point of $Q$. Therefore the space $(B_\lambda(\mathbb{T}),\tau_B)$ is countably compact at $A$, and hence it is countably pracompact.

$(\Rightarrow)$ Suppose that there exists a cardinal $\lambda>\mathfrak{c}$ such that the space $(B_\lambda(\mathbb{T}),\tau_B)$ is countably pracompact. Then there exists a dense subset $A$ of $(B_\lambda(\mathbb{T}),\tau_B)$ such that the space $(B_\lambda(\mathbb{T}),\tau_B)$ is countably compact at $A$. The definition of the topology $\tau_B$ implies that $A\cap\mathbb{T}_{\alpha,\beta}$ is dense subset in $\mathbb{T}_{\alpha,\beta}$ for all $\alpha,\beta\in\lambda$. Since $\lambda>\mathfrak{c}$ and $|\mathbb{T}|=\mathfrak{c}$ we conclude that there exists a point $x\in\mathbb{T}$ such that $(\alpha,x,\beta)\in A$ for infinitely many distinct pairs $(\alpha,\beta)$ of indices in $\lambda$. Put $\mathcal{K}=\{(\alpha,\beta)\in\lambda\times\lambda\colon (\alpha,x,\beta)\in A\}$.
The definition of the topology $\tau_B$ implies that for every infinite countable subset $\mathcal{K}_0\subseteq \mathcal{K}$ the set $\{(\alpha,x,\beta)\colon (\alpha,\beta)\in\mathcal{K}_0\}$ has no accumulation point in $(B_\lambda(\mathbb{T}),\tau_B)$, a contradiction.
\end{proof}

The proof of the following proposition is similar to the proof of Proposition~22 from \cite{Gutik-Pavlyk-2013}.

\begin{proposition}\label{proposition-2.20}
Let $S$ be a semiregular feebly compact (Hausdorff countably compact) semitopological semigroup such that $S$ is an orthogonal sum of the family $\{B_{\lambda_{i}}^0(S_i)\colon i\in\mathscr{I}\}$ of topological Brandt $\lambda^0_i$-extensions of semitopological monoids with zeros, i.e. $S=\sum_{i\in\mathscr{I}}B_{\lambda_{i}}^0(S_i)$. Then the following assertions hold:
\begin{itemize}
  \item[$(i)$] the topological space $S$ is regular if and only if the space $S_i$ is regular for each $i\in\mathscr{I}$;

  \item[$(ii)$] the topological space $S$ is Tychonoff if and only if the space $S_i$ is Tychonoff  for each $i\in\mathscr{I}$;

  \item[$(iii)$] the topological space $S$ is normal if and only if the space $S_i$ is normal for each $i\in\mathscr{I}$.
\end{itemize}
\end{proposition}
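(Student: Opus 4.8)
The plan is to prove each equivalence by exploiting the orthogonal sum decomposition together with the key structural fact, established in Proposition~\ref{proposition-2.2}, that each set $(S_i)_{\alpha_i,\beta_i}^*$ is open in $S$, and the fact that the relevant separation axioms (regularity, complete regularity, normality) are hereditary in the appropriate sense. The forward implications in all three parts are the routine direction: if $S$ carries the separation property, then each $S_i$ inherits it. Indeed, by Proposition~\ref{proposition-2.2} each summand $B_{\lambda_i}^0(S_i)$ is, apart from the zero, a union of open subsets of $S$, and by part~$(ii)$ of Proposition~\ref{proposition-2.6} (or the countably compact analogue) each $(S_i)_{\alpha_i,\alpha_i}$ is a pseudocompact subspace naturally homeomorphic to $S_i$; so I would realize $S_i$ as a subspace of $S$ and invoke heredity of regularity and of the Tychonoff property to subspaces, and heredity of normality to closed subspaces (here one must check that the relevant copy of $S_i$ sits as a closed subspace, which follows from the structure of the topology at $0$ once we know, via Proposition~\ref{proposition-2.8} or Corollary~\ref{corollary-2.9}, that only finitely many $(S_i)_{\alpha_i,\beta_i}$ miss any neighbourhood of zero).

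For the harder reverse implications, the essential idea is that the topology of $S$ is almost a ``disjoint-sum-with-one-point-compactification-type'' topology at the zero. First I would handle separation of two non-zero points $x,y$: if they lie in the same $(S_i)_{\alpha_i,\beta_i}^*$ this reduces to separation inside $S_i$ (translated by the homeomorphisms $\psi$ of the kind used in Theorem~\ref{theorem-2.11}), and if they lie in different summand-blocks then the openness of each $(S_i)_{\alpha_i,\beta_i}^*$ from Proposition~\ref{proposition-2.2} separates them outright. The only delicate point is separating a non-zero point $x$, or a closed set, from the zero $0$. Here I would use Proposition~\ref{proposition-2.8} (respectively Proposition~\ref{proposition-2.4} in the countably compact case): given a closed set not containing $0$, only finitely many blocks $(S_i)_{\alpha_i,\beta_i}$ fail to lie inside a prescribed basic neighbourhood $U(0)$ of zero, so one can shrink $U(0)$ to a neighbourhood of $0$ whose closure avoids the finitely many ``bad'' blocks, and on each such finite block the regularity (resp.\ complete regularity, normality) of $S_i$ supplies the required separating open set or Urysohn function.

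The main obstacle, and the place where real care is needed, is the \emph{Tychonoff} and \emph{normal} cases, because unlike regularity these are not simply finite-intersection properties of local data. For complete regularity I would construct a continuous real-valued function on $S$ separating $0$ from a given closed set $F$ with $0\notin F$ by taking, on the finitely many blocks meeting $F$ outside a fixed neighbourhood of $0$, Urysohn functions coming from the $S_i$ (which exist by their Tychonoff hypothesis), extending each by $0$ on the rest of $S$, and forming a finite supremum; the finiteness from Proposition~\ref{proposition-2.8} is exactly what makes the resulting function continuous at $0$. For normality the analogous argument requires separating two disjoint closed sets $A,B$: after noting that at most one of them, say $B$, can contain $0$, the finiteness statement again confines the ``interaction'' of $A$ and $B$ to a neighbourhood of $0$ together with finitely many blocks, and I would patch together normal separations on these finitely many $S_i$ with the trivial separation provided by the openness of the remaining blocks. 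The technical heart throughout is verifying continuity at the single point $0$, and in every case it is precisely the finiteness conclusion of Proposition~\ref{proposition-2.8} (and its countably compact counterpart) that reduces an a priori infinite patching problem to a finite one, which is why the proof parallels Proposition~2.8 of~\cite{GutikPavlyk2013??}.
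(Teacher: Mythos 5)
Your plan is correct and matches the intended proof: the paper gives no argument of its own here, deferring to the analogous Proposition~2.8 of \cite{GutikPavlyk2013??}, and your scheme --- realize each $S_i$ as a block $(S_i)_{\alpha_i,\alpha_i}$ that is open off zero and closed in $S$ (Proposition~\ref{proposition-2.2}), and then use the finiteness statements (Proposition~\ref{proposition-2.4} in the countably compact case, Proposition~\ref{proposition-2.8} in the semiregular pseudocompact case) so that the finitely many per-block separating open sets or Urysohn functions, each extended trivially off its block, patch together continuously at the single delicate point $0$ --- is exactly that argument. One small repair: the closedness of the copy $(S_i)_{\alpha_i,\alpha_i}$ of $S_i$ in $S$, which you need for heredity of normality, follows directly from Proposition~\ref{proposition-2.2} alone (its complement in $S$ is the union of the open sets $(S_j)_{\gamma,\delta}^{*}$ over all other blocks), and not from the finiteness statement of Proposition~\ref{proposition-2.8} or Corollary~\ref{corollary-2.9}, which concerns neighbourhoods of zero and plays no role in that step.
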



The following theorem characterizes feebly compact topological Brandt $\lambda^0$-extensions of topological monoids with zero in the class of Hausdorff topological semigroups.

\begin{theorem}\label{theorem-3.1}
A topological Brandt $\lambda^0$-extension $\left(B^0_{\lambda}(S), \tau_{B}\right)$ of a topological monoid $(S,\tau_S)$ with zero in the class of Hausdorff topological semigroups is feebly compact if and only if the cardinal $\lambda$ is finite and the space $(S,\tau_S)$ is feebly compact.
\end{theorem}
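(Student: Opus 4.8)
The plan is to prove both implications, disposing of the elementary parts quickly and concentrating on the finiteness of $\lambda$, which is the only place where \emph{joint} (as opposed to separate) continuity is essential; indeed Example~\ref{example-2.13} exhibits a pseudocompact \emph{semitopological} Brandt extension with infinite $\lambda$, so any correct argument must genuinely use the topological (jointly continuous) hypothesis. For the sufficiency, assume $\lambda$ is finite and $(S,\tau_S)$ is pseudocompact, and fix an index $\ast$ for which $S_{\ast,\ast}$ is naturally homeomorphic to $S$. For arbitrary $\alpha,\beta$ the maps $x\mapsto(\alpha,1_S,\ast)\cdot x\cdot(\ast,1_S,\beta)$ and $y\mapsto(\ast,1_S,\alpha)\cdot y\cdot(\beta,1_S,\ast)$ are continuous, send zero to zero, and are mutually inverse on the respective blocks, so each $S_{\alpha,\beta}$ is homeomorphic to $S_{\ast,\ast}\cong S$ and hence pseudocompact. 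By Proposition~\ref{proposition-2.2} every punctured block is open, so each $S_{\alpha,\beta}$ is closed, and $B^0_\lambda(S)=\bigcup_{\alpha,\beta}S_{\alpha,\beta}$ is a \emph{finite} union of closed pseudocompact subspaces. Such a union is pseudocompact: if $X=F_1\cup\dots\cup F_m$ and $\mathcal U$ is a locally finite family of non-empty open sets, then each $U\in\mathcal U$ equals $(U\cap F_1)\cup\dots\cup(U\cap F_m)$, so $U\mapsto(U\cap F_1,\dots,U\cap F_m)$ injects $\mathcal U$ into the product of the finite trace families on the $F_j$; hence $\mathcal U$ is finite.

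For the necessity, assume $\left(B^0_\lambda(S),\tau_B\right)$ is pseudocompact. That $S$ is pseudocompact is immediate: the retraction $r(x)=(\ast,1_S,\ast)\cdot x\cdot(\ast,1_S,\ast)$ is continuous by joint continuity and maps $B^0_\lambda(S)$ onto $S_{\ast,\ast}$, and a continuous image of a (feebly compact) pseudocompact space is pseudocompact, so $S_{\ast,\ast}\cong S$ is pseudocompact. The finiteness of $\lambda$ is the heart of the matter. I argue by contradiction, assuming $\lambda$ infinite, and plan to exploit the identity $(\ast,1_S,\eta)\cdot(\eta,1_S,\ast)=(\ast,1_S,\ast)$, which returns the idempotent $e_\ast:=(\ast,1_S,\ast)$ \emph{regardless} of the intermediate index $\eta$. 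The genuine difficulty is synchronisation: I must place \emph{both} off-diagonal units $(\ast,1_S,\eta)$ and $(\eta,1_S,\ast)$, for one common $\eta$, inside a single neighbourhood $W$ of $0$ whose square avoids $e_\ast$.

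To overcome this I follow the regularisation device already used in case~(4) of Lemma~\ref{lemma-2.1}: pass to the regularisation $\left(B^0_\lambda(S),\tau_r\right)$. As there, $\tau_r$ is a \emph{semiregular} topology whose semigroup operation is still jointly continuous, and weakening $\tau_B$ to $\tau_r$ preserves pseudocompactness, since a $\tau_r$-locally finite family of $\tau_r$-open sets is $\tau_B$-locally finite. Each punctured block is $\tau_B$-regular open, because its $\tau_B$-closure adds at most $0$, which cannot be interior to it (otherwise a neighbourhood of $0$ would miss the infinitely many remaining blocks, violating pseudocompactness); hence the blocks are $\tau_r$-open and $\left(B^0_\lambda(S),\tau_r\right)$ still carries the Brandt $\lambda^0$-structure required by Proposition~\ref{proposition-2.8}. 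Using $\tau_B$-Hausdorffness pick disjoint $\tau_B$-open sets $O\ni 0$ and $O'\ni e_\ast$, and let $O_0$ be the $\tau_B$-interior of the $\tau_B$-closure of $O$; then $O_0$ is $\tau_r$-open, $0\in O_0$, and $e_\ast\notin O_0$ (as $O'$ is a neighbourhood of $e_\ast$ missing $O$). By joint $\tau_r$-continuity choose a $\tau_r$-open $W\ni 0$ with $W\cdot W\subseteq O_0$. Applying Proposition~\ref{proposition-2.8} to the semiregular pseudocompact semigroup $\left(B^0_\lambda(S),\tau_r\right)$ with the neighbourhood $W$ shows that all but finitely many blocks lie in $W$; as $\lambda$ is infinite, cofinitely many $\eta$ satisfy both $S_{\ast,\eta}\subseteq W$ and $S_{\eta,\ast}\subseteq W$. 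Fixing such an $\eta$ gives $(\ast,1_S,\eta),(\eta,1_S,\ast)\in W$, whence $e_\ast=(\ast,1_S,\eta)\cdot(\eta,1_S,\ast)\in W\cdot W\subseteq O_0$, contradicting $e_\ast\notin O_0$. Therefore $\lambda$ is finite.

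The main obstacle is precisely this last step: separate continuity is too weak, so joint continuity must be converted into the statement ``whole blocks eventually lie in every neighbourhood of $0$''. The regularisation supplies exactly the semiregularity needed to invoke Proposition~\ref{proposition-2.8}, after which the absorbing idempotent identity dissolves the synchronisation problem and forces the contradiction. I expect the only delicate points to be the verification that $\tau_r$ retains joint continuity and the block structure, both of which are handled exactly as in the proof of Lemma~\ref{lemma-2.1}.
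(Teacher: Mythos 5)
Your sufficiency argument and your proof that $(S,\tau_S)$ is pseudocompact match the paper's. The gap is in the finiteness of $\lambda$, precisely at the point where you pass to the regularization $\tau_r$ and assert that ``$\tau_r$ retains joint continuity \dots handled exactly as in the proof of Lemma~\ref{lemma-2.1}''. The regularization argument in case (4) of Lemma~\ref{lemma-2.1} is not a general fact about topological semigroups: it works there only because $S\setminus\{0\}$ is a \emph{group}, so translations by non-zero elements are homeomorphisms and sets of the form $xW\operatorname{cl}_S(W)$ are open. For an arbitrary topological monoid $S$ with zero, translations on $B^0_{\lambda}(S)$ are not homeomorphisms and products of open sets need not be open; the naive attempt gives only $\operatorname{int}(\operatorname{cl}(U))\cdot\operatorname{int}(\operatorname{cl}(V))\subseteq\operatorname{cl}(R)$ when $UV\subseteq R$, which does not place the product inside $R=\operatorname{int}(\operatorname{cl}(R))$. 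Consequently neither the joint $\tau_r$-continuity you invoke to find a $\tau_r$-open $W$ with $W\cdot W\subseteq O_0$, nor the formal hypothesis of Proposition~\ref{proposition-2.8} (which requires $\left(B^0_{\lambda}(S),\tau_r\right)$ to be a semitopological semigroup), is justified. This is a genuine gap, not a routine verification.

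The gap is repairable, because your endgame never needs continuity with respect to $\tau_r$. First, the proof of Proposition~\ref{proposition-2.8} uses only semiregularity, pseudocompactness and openness of the punctured blocks -- it never touches the multiplication -- so it can be repeated verbatim for $\left(B^0_{\lambda}(S),\tau_r\right)$ once the punctured blocks are known to be regular open, which you do establish correctly. Second, the neighbourhood $W$ can be produced from $\tau_B$-continuity alone: choose disjoint $\tau_B$-open sets $O\ni 0$ and $O'\ni e_\ast$, then a $\tau_B$-open $V\ni 0$ with $V\cdot V\subseteq O$, and put $W=\operatorname{int}_{\tau_B}(\operatorname{cl}_{\tau_B}(V))$; then $W$ is regular open, $0\in W$, and $W\cdot W\subseteq\operatorname{cl}(V)\cdot\operatorname{cl}(V)\subseteq\operatorname{cl}(V\cdot V)\subseteq\operatorname{cl}(O)$, which misses $e_\ast$, so your contradiction with $e_\ast=(\ast,1_S,\eta)\cdot(\eta,1_S,\ast)$ goes through. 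It is worth noting that the paper's own proof avoids regularization entirely and never needs whole blocks inside a neighbourhood of zero: pseudocompactness forces $0$ to be an accumulation point of the family $\left\{(V(1_S))_{\alpha,\beta}\right\}$, so a fixed $V_0\ni 0$ with $V_0\cdot V_0\subseteq U_0$ meets all but finitely many of these sets; picking, for a common $\beta$, points of $V_0$ in the $(\alpha,\beta)$- and $(\beta,\alpha)$-blocks with $S$-coordinates in $V(1_S)$ yields a product lying both in $U_0$ and in $(U(1_S))_{\alpha,\alpha}$, contradicting disjointness. The synchronization you worried about is thus resolved by simple counting, with no semiregularity needed.
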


\begin{proof}
$(\Leftarrow)$ The continuity of multiplication in $\left(B^0_{\lambda}(S), \tau_{B}\right)$ implies that for all $\alpha,\beta,\gamma,\delta\in\lambda$ the map $\psi^{\alpha,\beta}_{\gamma,\delta}\colon B^0_{\lambda}(S)\to B^0_{\lambda}(S)$ defined by the formula $\psi^{\alpha,\beta}_{\gamma,\delta}(x)= (\gamma,1_S,\alpha)\cdot x\cdot(\beta,1_S,\delta)$, where $1_S$ is unit of the semigroup $S$, is continuous. Since for all $\alpha,\beta,\gamma,\delta\in\lambda$ the restrictions $\psi^{\alpha,\beta}_{\gamma,\delta}|_{S_{\alpha,\beta}}\colon S_{\alpha,\beta}\to S_{\gamma,\delta}$ and $\psi^{\gamma,\delta}_{\alpha,\beta}|_{S_{\gamma,\delta}}\colon S_{\gamma,\delta}\to S_{\alpha,\beta}$ are bijective continuous maps we conclude that $S_{\alpha,\beta}$ and $S_{\gamma,\delta}$ are homeomorphic subspaces of $\left(B^0_{\lambda}(S), \tau_{B}\right)$. Therefore the space $\left(B^0_{\lambda}(S), \tau_{B}\right)$ is the union of finitely many copies of the feebly compact topological space $(S,\tau_S)$, and hence it is feebly compact.

$(\Rightarrow)$  Suppose that a topological Brandt $\lambda^0$-extension $\left(B^0_{\lambda}(S), \tau_{B}\right)$ of a topological monoid $(S,\tau_S)$ with zero in the class of topological semigroups is feebly compact. Then by Proposition~\ref{proposition-2.6}$(ii)$ the space $(S,\tau_S)$ is feebly compact.

Suppose for a contradiction that there exists a feebly compact topological Brandt $\lambda^0$-extension $\left(B^0_{\lambda}(S), \tau_{B}\right)$ of a topological monoid $(S,\tau_S)$ with zero in the class of Hausdorff topological semigroups such that the cardinal $\lambda$ is infinite. Then the Hausdorffness of $\left(B^0_{\lambda}(S), \tau_{B}\right)$ implies that for every $\alpha\in\lambda$ there exist open disjoint neighbourhoods $U_0$ and $U_{(\alpha,1_S,\alpha)}$ of zero and $(\alpha,1_S,\alpha)$ in $\left(B^0_{\lambda}(S), \tau_{B}\right)$, respectively. Without loss of generality we can assume that $U_{(\alpha,1_S,\alpha)}=(U(1_S))_{\alpha,\alpha}$ for some open neighbourhood $U(1_S)$ of unit $1_S$ in $(S,\tau_S)$ (see Proposition~\ref{proposition-2.2}). By the continuity of multiplication in $\left(B^0_{\lambda}(S), \tau_{B}\right)$ there exists an open neighbourhood $V_0$ of zero in $\left(B^0_{\lambda}(S), \tau_{B}\right)$ such that $V_0\cdot V_0\subseteq U_0$. Also the continuity of multiplication in $(S,\tau_S)$ implies that there exists an open neighbourhood $V(1_S)$ of unit $1_S$ in $(S,\tau_S)$ such that $V(1_S)\cdot V(1_S)\subseteq U(1_S)$ in $S$.

Then the feeble compactness of $\left(B^0_{\lambda}(S), \tau_{B}\right)$ implies that zero $0$ is an accumulation point of each infinite subfamily of $\left\{(V(1_S))_{\alpha,\beta}\colon \alpha,\beta\in\lambda\right\}$ and hence $V_0\cap (V(1_S))_{\alpha,\beta}=\varnothing$ only for finitely many pairs of indices $(\alpha,\beta)$. Hence by the definition of multiplication on $B^0_{\lambda}(S)$ we have that $(V_0\cdot V_0)\cap U_{(\alpha,1_S,\alpha)}\neq\varnothing$. This contradicts the assumption that $U_0\cap U_{(\alpha,1_S,\alpha)}=\varnothing$. The obtained contradiction implies that cardinal $\lambda$ is finite.
\end{proof}

Theorem~\ref{theorem-3.1} implies the following corollary:

\begin{corollary}\label{corollary-3.1a}
A feebly compact topological Brandt $\lambda^0$-extension  of a topological inverse monoid with zero in the class of Hausdorff topological semigroups is a topological inverse semigroup.
\end{corollary}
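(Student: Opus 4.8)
The plan is to deduce everything from Theorem~\ref{theorem-3.1}. Let $S$ be a topological inverse monoid with zero $\mathscr{O}$ and suppose $\left(B^0_{\lambda}(S),\tau_B\right)$ is a pseudocompact topological Brandt $\lambda^0$-extension of $S$ in the class of Hausdorff topological semigroups. By Theorem~\ref{theorem-3.1} the cardinal $\lambda$ is finite (and $S$ is pseudocompact). Algebraically $B^0_{\lambda}(S)$ is an inverse semigroup: its idempotents are $0$ together with the elements $(\alpha,e,\alpha)$ with $e\in E(S)$, these commute because the idempotents of the inverse semigroup $S$ commute, and every $(\alpha,a,\beta)$ is regular with $(\alpha,a,\beta)^{-1}=(\beta,a^{-1},\alpha)$ and $0^{-1}=0$, where $a^{-1}$ is the inverse of $a$ in $S$. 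Since $B^0_{\lambda}(S)$ is already a Hausdorff topological semigroup, the only remaining task is to show that the inversion $\operatorname{inv}\colon(\alpha,a,\beta)\mapsto(\beta,a^{-1},\alpha)$, $0\mapsto 0$, is continuous.

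First I would record the structural facts. By Proposition~\ref{proposition-2.2} every set $(S)^*_{\gamma,\delta}$ is open in $B^0_{\lambda}(S)$; consequently each set $(S)_{\alpha,\beta}=(S)^*_{\alpha,\beta}\cup\{0\}$ is \emph{closed}, since its complement $\bigcup_{(\gamma,\delta)\neq(\alpha,\beta)}(S)^*_{\gamma,\delta}$ is a union of open sets. As in the proof of Theorem~\ref{theorem-3.1}, the maps $\psi^{\alpha,\beta}_{\gamma,\delta}(x)=(\gamma,1_S,\alpha)\cdot x\cdot(\beta,1_S,\delta)$ are continuous, and their restrictions $\psi^{\alpha,\beta}_{\alpha,\alpha}|_{(S)_{\alpha,\beta}}\colon(S)_{\alpha,\beta}\to(S)_{\alpha,\alpha}$ are homeomorphisms carrying $(\alpha,a,\beta)$ to $(\alpha,a,\alpha)$ and $0$ to $0$. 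Composing with the natural homeomorphism $(S)_{\alpha,\alpha}\cong S$ yields a homeomorphism $h_{\alpha,\beta}\colon(S)_{\alpha,\beta}\to S$ with $h_{\alpha,\beta}(\alpha,a,\beta)=a$ and $h_{\alpha,\beta}(0)=\mathscr{O}$.

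Next I would verify continuity of $\operatorname{inv}$ on each closed block. Because $\operatorname{inv}$ sends $(S)_{\alpha,\beta}$ into $(S)_{\beta,\alpha}$, its restriction factors as $\operatorname{inv}|_{(S)_{\alpha,\beta}}=h_{\beta,\alpha}^{-1}\circ\iota_S\circ h_{\alpha,\beta}$, where $\iota_S\colon S\to S$, $a\mapsto a^{-1}$, is the inversion of $S$. Since $S$ is a topological inverse semigroup, $\iota_S$ is continuous (indeed a homeomorphism, \cite[Proposition~II.1]{EberhartSelden1969}), so $\operatorname{inv}|_{(S)_{\alpha,\beta}}$ is continuous for every pair $(\alpha,\beta)$. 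The finitely many closed sets $\{(S)_{\alpha,\beta}\colon\alpha,\beta\in\lambda\}$ cover $B^0_{\lambda}(S)$, and the restrictions of $\operatorname{inv}$ agree on their common points (every overlap equals $\{0\}$, where $\operatorname{inv}(0)=0$), so the standard pasting lemma for a finite closed cover yields continuity of $\operatorname{inv}$ on all of $B^0_{\lambda}(S)$, and hence $B^0_{\lambda}(S)$ is a topological inverse semigroup.

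The one genuinely delicate point is continuity at the zero $0$, the single point where all the blocks meet, and this is exactly where finiteness of $\lambda$ is indispensable. The factorization shows $\operatorname{inv}$ is continuous on each block, but gluing continuity across the shared point $0$ requires a \emph{finite} closed cover; with infinitely many blocks the pasting lemma fails, and indeed by Theorem~\ref{theorem-3.1} an infinite $\lambda$ is incompatible with pseudocompactness. Thus pseudocompactness enters only through Theorem~\ref{theorem-3.1} to force $\lambda<\omega$, after which the finite pasting argument closes the proof.
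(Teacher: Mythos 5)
Your proof is correct and takes essentially the same route as the paper: the paper deduces this corollary directly from Theorem~\ref{theorem-3.1} (finiteness of $\lambda$ and pseudocompactness of $S$), leaving the continuity of inversion implicit, and your factorization of inversion through the continuous inversion of $S$ on each block $(S)_{\alpha,\beta}$, glued by the pasting lemma over the resulting finite closed cover, is exactly the natural way to fill in that step.
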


The following example shows that there exists a compact topological semigroup with a non-pseudo\-com\-pact topological Brandt $2^0$-extension in the class of topological semigroups and hence the counterpart of Theorem~\ref{theorem-3.1} does not necessarily hold for semigroups without non-zero idempotent.

\begin{example}\label{example-3.2}
Let $X$ be any infinite Hausdorff compact topological space. Fix an arbitrary $z\in X$ and define multiplication on $X$ in the following way: $x\cdot y=z$ for all $x,y\in X$. It is obvious that this operation is continuous on $X$ and $z$ is zero of $X$. The set $X$ endowed with such an operation is called a \emph{semigroup with zero-multiplication}. We define the topology $\tau_B$ on the Brandt $2^0$-extension $B_2^0(X)$ of the semigroup $X$ as follows:
\begin{itemize}
  \item[$(i)$] the family $\mathscr{B}(0)=\left\{U_{1,1}\cup U_{2,2}\colon U\in\mathscr{B}(z)\right\}$, where $\mathscr{B}(z)$ is a base of the topology of $X$ at $z$, is the base of topology $\tau_B$ at zero of $B_2^0(X)$;
  \item[$(ii)$] for $i=1,2$ and any $x\in X\setminus\{z\}$ the family $\mathscr{B}_{(i,x,i)}=\left\{U_{i,i}\colon U\in\mathscr{B}(x)\right\}$, where $\mathscr{B}(x)$ is a base of the topology of $X$ at the point $x$, is the base of topology $\tau_B$ at the point $(i,x,i)\in B_2^0(X)$;
  \item[$(iii)$] all points of the subsets $X_{1,2}^*$ and $X_{2,1}^*$ are isolated points in $(B_2^0(X),\tau_B)$.
\end{itemize}
It is obvious that $B_2^0(X)$ is a semigroup with zero-multiplication. Simple verifications show that $\tau_B$ is a Hausdorff topology on $B_2^0(X)$. Hence $(B_2^0(X),\tau_B)$ is a topological semigroup and $(B_2^0(X),\tau_B)$ is a topological Brandt $2^0$-extension of $X$ in the class of topological semigroups. Since $X_{1,2}^*$ and $X_{2,1}^*$ are discrete open-and-closed subspaces of $(B_2^0(X),\tau_B)$ we have that the topological  space $(B_2^0(X),\tau_B)$ is not feebly compact.
\end{example}

Also, the following example shows that there exists a compact topological semigroup $S$ such that for every infinite cardinal $\lambda$ there exists a compact (and hence feebly compact) topological Brandt $\lambda^0$-extension $B_\lambda^0(S)$ of the semigroup $S$ in the class of topological semigroups.

\begin{example}\label{example-3.3}
Let $X$ be a compact topological semigroup defined in Example~\ref{example-3.2} and $\lambda$ be an arbitrary infinite cardinal. We define the topology $\tau_B$ on the Brandt $\lambda^0$-extension $B_\lambda^0(X)$ of the semigroup $X$ as follows:
\begin{itemize}
  \item[$(i)$] the family $\mathscr{B}_B(0)=\big\{U_{A}(0)= \bigcup_{(\alpha,\beta)\in(\lambda\times\lambda) \setminus A} X_{\alpha,\beta}\cup \bigcup_{(\gamma,\delta)\in A} (U(z))_{\gamma,\delta} \colon A \hbox{~is a finite subset of~} \lambda\times\lambda$ and  $U(z)\in\mathscr{B}_X(z)\big\}$, where $\mathscr{B}_X(z)$ is a base of the topology $x\in X$, is a base of topology $\tau_B$ at zero of $B_\lambda^0(X)$;
  \item[$(ii)$] for all $\alpha,\beta\in\lambda$ and any $x\in X\setminus \{z\}$ the family $\mathscr{B}_{(\alpha,x,\beta)}=\left\{U_{\alpha,\beta}\colon U\in\mathscr{B}(x)\right\}$, where $\mathscr{B}_X(x)$ is a base of the topology of $X$ at the point $x$, is the base of topology $\tau_B$ at the point $(\alpha,x,\beta)\in B_\lambda^0(X)$.
\end{itemize}
It is obvious that $B_\lambda^0(X)$ is a semigroup with zero-multiplication. Simple verifications show that $\tau_B$ is a Hausdorff compact topology on $B_\lambda^0(X)$. Hence $(B_\lambda^0(X),\tau_B)$ is a topological semigroup and $(B_\lambda^0(X),\tau_B)$ is a compact topological Brandt $\lambda^0$-extension of $X$ in the class of topological semigroups.
\end{example}

The following proposition extends Theorem~\ref{theorem-3.1}.

\begin{proposition}\label{proposition-3.4}
Let $S$ be a Hausdorff feebly compact topological semigroup such that $S$ is an orthogonal sum of the family $\{B_{\lambda_{i}}^0(S_i)\colon i\in\mathscr{I}\}$ of topological Brandt $\lambda^0_i$-extensions of topological semigroups with zeros, i.e. $S=\sum_{i\in\mathscr{I}}B_{\lambda_{i}}^0(S_i)$. If for some $i\in\mathscr{I}$ the semigroup $S_i$ has a non-zero idempotent then cardinal $\lambda_i$ is finite.
\end{proposition}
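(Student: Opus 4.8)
The plan is to reduce to a single summand and then repeat the argument of Theorem~\ref{theorem-3.1} with a non-zero idempotent of $S_i$ playing the role of the unit. First I would fix the index $i\in\mathscr{I}$ for which $S_i$ possesses a non-zero idempotent $e$, and by Proposition~\ref{proposition-2.6}$(i)$ I may replace $S$ by the maximal topological Brandt $\lambda^0_i$-extension $B^0_{\lambda_i}(S_i)$, which is again a Hausdorff pseudocompact topological semigroup (as a subsemigroup of $S$ it carries a continuous multiplication). Thus it suffices to prove: if $B^0_{\lambda}(T)$ is a Hausdorff pseudocompact topological semigroup which is a topological Brandt $\lambda^0$-extension of a topological semigroup $T$ with zero possessing a non-zero idempotent $e$, then $\lambda$ is finite; here I write $\lambda=\lambda_i$ and $T=S_i$.

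Arguing by contradiction, I would assume $\lambda$ infinite and fix $\alpha\in\lambda$. The element $(\alpha,e,\alpha)$ is a non-zero idempotent of $B^0_{\lambda}(T)$, since $e^2=e$ and $e\neq 0_T$. By Hausdorffness I choose disjoint open neighbourhoods $U_0$ of $0$ and $U_{(\alpha,e,\alpha)}$ of $(\alpha,e,\alpha)$, and by Proposition~\ref{proposition-2.2} I may assume $U_{(\alpha,e,\alpha)}=(U(e))_{\alpha,\alpha}$ for an open neighbourhood $U(e)$ of $e$ in $T$ with $0_T\notin U(e)$ (possible as $e\neq 0_T$ and $T$ is Hausdorff). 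Continuity of the multiplication then yields an open neighbourhood $V_0$ of $0$ in $B^0_{\lambda}(T)$ with $V_0\cdot V_0\subseteq U_0$, and --- here the idempotent law is essential --- continuity of the multiplication in $T$ together with $e\cdot e=e$ yields an open neighbourhood $V(e)$ of $e$ with $V(e)\cdot V(e)\subseteq U(e)$.

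The key step is the pseudocompactness argument. I would consider the family $\{(V(e))_{\gamma,\delta}\colon\gamma,\delta\in\lambda\}$ of non-empty open sets. By Proposition~\ref{proposition-2.2} every non-zero point of $B^0_{\lambda}(T)$ has a neighbourhood contained in a single block $T^*_{\gamma,\delta}$, hence meeting at most one member of this family; so no non-zero point can be an accumulation point of any infinite subfamily. Since the space is Hausdorff and pseudocompact, every locally finite family of non-empty open sets is finite, and consequently each infinite subfamily must accumulate at $0$. Therefore $V_0$ meets all but finitely many of the sets $(V(e))_{\gamma,\delta}$: if it missed infinitely many, those would form an infinite subfamily with no accumulation point at all, contradicting pseudocompactness. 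Hence $V_0\cap (V(e))_{\gamma,\delta}=\varnothing$ for only finitely many pairs $(\gamma,\delta)$.

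Finally, since $\lambda$ is infinite and only finitely many pairs are exceptional, I can choose $\beta\in\lambda$ so that both $V_0\cap (V(e))_{\alpha,\beta}\neq\varnothing$ and $V_0\cap (V(e))_{\beta,\alpha}\neq\varnothing$. Picking $(\alpha,s,\beta)\in V_0\cap (V(e))_{\alpha,\beta}$ and $(\beta,t,\alpha)\in V_0\cap (V(e))_{\beta,\alpha}$, their product $(\alpha,st,\alpha)$ lies in $V_0\cdot V_0\subseteq U_0$, while $st\in V(e)\cdot V(e)\subseteq U(e)$ gives $(\alpha,st,\alpha)\in(U(e))_{\alpha,\alpha}=U_{(\alpha,e,\alpha)}$; this contradicts $U_0\cap U_{(\alpha,e,\alpha)}=\varnothing$, so $\lambda$ must be finite. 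I expect the only genuine subtlety to be the use of the non-zero idempotent: it is exactly what lets the product of two elements close to $e$, sitting in the complementary blocks $(\alpha,\beta)$ and $(\beta,\alpha)$, return to a neighbourhood of $(\alpha,e,\alpha)$, and Example~\ref{example-3.2} shows that the conclusion indeed fails for zero-multiplication semigroups, which have no non-zero idempotent.
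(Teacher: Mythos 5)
Your proof is correct and takes essentially the same route as the paper's: assume $\lambda_i$ is infinite, separate $0$ from the idempotent $(\alpha_i,e,\alpha_i)$ by disjoint open sets, square the neighbourhoods by joint continuity, use pseudocompactness to force every infinite subfamily of $\left\{(V(e))_{\gamma,\delta}\colon \gamma,\delta\in\lambda_i\right\}$ to accumulate only at zero, and obtain the contradiction by multiplying two off-diagonal blocks back into the diagonal block $(U(e))_{\alpha_i,\alpha_i}$. The only cosmetic differences are your preliminary reduction to the single summand $B^0_{\lambda_i}(S_i)$ via Proposition~\ref{proposition-2.6}$(i)$ (the paper runs the argument inside $S$ directly) and your explicit appeal to Proposition~\ref{proposition-2.2}, which is stated for monoids; its proof, however, only needs some non-zero element of $S_i$ to multiply by, and the paper circumvents this very point with the sandwich $(\alpha_i,e,\alpha_i)\cdot V\cdot(\alpha_i,e,\alpha_i)$ --- exactly the role your idempotent plays.
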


\begin{proof}
Suppose for a contradiction that there exists $i\in\mathscr{I}$ such that the cardinal $\lambda_i$ is infinite. Let $e$ be a non-zero idempotent of $S_i$. Then the Hausdorffness of $S$ implies that for every $\alpha_i\in\lambda_i$ there exist open disjoint neighbourhoods $U_0$ and $U_{(\alpha_i,e,\alpha_i)}$ of zero and $(\alpha_i,e,\alpha_i)$ in $S$, respectively. By continuity of multiplication in $S$ there exists an open neighbourhood $V_{(\alpha_i,e,\alpha_i)}$ of $(\alpha_i,e,\alpha_i)$ in $S$ such that $(\alpha_i,e,\alpha_i)\cdot V_{(\alpha_i,e,\alpha_i)}\cdot (\alpha_i,e,\alpha_i)\subseteq U_{(\alpha_i,e,\alpha_i)}$. This implies that $V_{(\alpha_i,e,\alpha_i)}\subseteq (S^*_i)_{\alpha_i,\alpha_i}$. Therefore without loss of generality we can assume that $U_{(\alpha_i,e,\alpha_i)}=(U(e))_{\alpha_i,\beta_i}$ for some open neighbourhood $U(e)$ of the idempotent $e$ in $S_i$. By continuity of multiplication in $S$ there exists an open neighbourhood $V_0$ of zero in $S$ such that $V_0\cdot V_0\subseteq U_0$. Also the continuity of the semigroup operation in $S_i$ implies that there exists an open neighbourhood $V(e)$ of the idempotent $e$ in $S_i$ such that $V(e)\cdot V(e)\subseteq U(e)$ in $S_i$.

Then the feeble compactness of $S$ implies that zero $0$ is an accumulation point of each infinite subfamily of $\left\{(V(1_S))_{\alpha_i,\beta_i}\colon \alpha_i,\beta_i\in\lambda_i, i\in\mathscr{I}\right\}$ and hence $V_0\cap (V(1_S))_{\alpha_i,\beta_i}=\varnothing$ only for finitely many pairs if indices $(\alpha_i,\beta_i)$ from $\lambda_i$, $i\in\mathscr{I}$. Hence by the definition of multiplication on $S$ we have that $(V_0\cdot V_0)\cap U_{(\alpha_i,e,\alpha_i)}\neq\varnothing$. This contradicts the assumption $U_0\cap U_{(\alpha_i,e,\alpha_i)}=\varnothing$. The obtained contradiction implies that cardinal $\lambda_i$ is finite.
\end{proof}

Theorem~\ref{theorem-2.12} and Proposition~\ref{proposition-3.4} imply the following:

\begin{theorem}\label{theorem-3.5}
Let $S$ be a Hausdorff primitive inverse feebly compact topological semigroup and $S$ be an orthogonal sum of the family $\{B_{\lambda_{i}}(G_i)\colon i\in\mathscr{I}\}$ of topological Brandt semigroups with zeros. Then the following assertions hold:
\begin{enumerate}
  \item[$(i)$] every cardinal $\lambda_i$ is finite;
  \item[$(ii)$] every maximal subgroup of $S$ is open-and-closed subset of $S$ and hence is feebly compact;
  \item[$(iii)$] for every $i\in\mathscr{I}$ the maximal Brandt semigroup $B_{\lambda_{i}}(G_i)$ is a feebly compact topological Brandt $\lambda$-extension of the feebly compact paratopological group $G_i$;
  \item[$(iv)$] if $\mathscr{B}_{(\alpha_i,e_{i},\alpha_i)}$ is a base of the topology at the unity $(\alpha_i,e_{i},\alpha_i)$ of a maximal non-zero subgroup $(G_i)_{\alpha_i,\alpha_i}$ of $S$, $i\in\mathscr{I}$, such that $U\subseteq (G_i)_{\alpha_i,\alpha_i}$ for any $U\in \mathscr{B}_{(\alpha_i,e_{i},\alpha_i)}$, then the family
      \begin{equation*}
        \mathscr{B}_{(\beta_i,x,\gamma_i)}=\left\{(\beta_i,x,\alpha_i)\cdot U\cdot (\alpha_i,e_i,\gamma_i)\colon U\in \mathscr{B}_{(\alpha_i,e_{i},\alpha_i)} \right\}
      \end{equation*}
      is a base of the topology at the point $(\beta_i,x,\gamma_i)\in (G_i)_{\beta_i,\gamma_i}\subseteq B_{\lambda_{i}}(G_i)$, for all $\beta_i,\gamma_i\in \lambda_i$.
\end{enumerate}
If in addition the topological space $S$ is semiregular then
\begin{enumerate}
  \item[$(v)$] the family
      \begin{equation*}\label{eq-3.1}
      \begin{split}
        \mathscr{B}_{0}= \big\{S\setminus& \big((G_{i_1})_{\alpha_{i_1},\beta_{i_1}}\cup \cdots\cup (G_{i_k})_{\alpha_{i_k},\beta_{i_k}}\big)\colon i_1,\ldots,i_k\in\mathscr{I}, \alpha_{i_k},\beta_{i_k}\in\lambda_{i_k},\\
          & k\in\mathbb{N},  \{(\alpha_{i_1},\beta_{i_1}),\ldots,(\alpha_{i_k},\beta_{i_k})\} \hbox{~is finite}\big\}
      \end{split}
      \end{equation*}
      is a base of the topology at zero of $S$.
\end{enumerate}
\end{theorem}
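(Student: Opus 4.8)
The plan is to derive every assertion from Theorem~\ref{theorem-2.12} and Proposition~\ref{proposition-3.4}, exploiting crucially that $S$ is a \emph{topological} (and not merely semitopological) semigroup. First I would record that for every $i\in\mathscr{I}$ the subsemigroup $S_{\alpha_i,\alpha_i}=(G_i)_{\alpha_i,\alpha_i}\cup\{0\}$ of $S$, carrying the subspace topology, is itself a topological semigroup, being a subsemigroup of the topological semigroup $S$. Hence Condition~(4) of Theorem~\ref{theorem-2.12} is fulfilled for each $i$, so Theorem~\ref{theorem-2.12} applies and delivers assertion~$(ii)$ and assertion~$(iv)$ directly, together with assertion~$(v)$ under the additional semiregularity hypothesis. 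It also delivers the weaker form of~$(iii)$ in which $G_i$ is asserted to be only a pseudocompact \emph{semitopological} group.

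To obtain assertion~$(i)$, I would rewrite each maximal Brandt semigroup as a Brandt $\lambda^0$-extension of a topological semigroup with zero. For a group $G_i$ one has, directly from the construction $B^0_{\lambda}(\,\cdot\,)=B_{\lambda}(\,\cdot\,)/\mathcal{J}$ of the preliminaries applied to $G_i^0=G_i\cup\{0\}$, the isomorphism $B_{\lambda_i}(G_i)\cong B_{\lambda_i}^0(G_i^0)$; here $G_i^0$ is identified with $S_{\alpha_i,\alpha_i}$ equipped with the subspace topology, which is a topological semigroup with zero possessing the non-zero idempotent $(\alpha_i,e_i,\alpha_i)$. Thus $S=\sum_{i\in\mathscr{I}}B_{\lambda_i}^0(G_i^0)$ exhibits $S$ as an orthogonal sum of topological Brandt $\lambda_i^0$-extensions of topological semigroups with zeros, each summand carrying a non-zero idempotent. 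Proposition~\ref{proposition-3.4} then forces every cardinal $\lambda_i$ to be finite, which is precisely assertion~$(i)$.

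It remains to upgrade assertion~$(iii)$ from ``semitopological'' to ``paratopological'', and here I would invoke the joint continuity of the multiplication of $S$. Since $(G_i)_{\alpha_i,\alpha_i}$ is a subsemigroup of $S$, the restriction of the jointly continuous multiplication of $S$ turns the group $(G_i)_{\alpha_i,\alpha_i}$ into a Hausdorff topological space with jointly continuous group operation, i.e.\ a paratopological group; by Theorem~\ref{theorem-2.12}$(i)$ it is moreover pseudocompact. As $G_i$ is topologically isomorphic to $(G_i)_{\alpha_i,\alpha_i}$ (cf.\ the isomorphisms $\psi^{\alpha_i,\beta_i}_{\gamma_i,\delta_i}$ used in Theorem~\ref{theorem-2.12}), this yields the full form of~$(iii)$.

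The only genuinely non-routine point I expect is the identification in the second paragraph: one must be sure that $B_{\lambda_i}(G_i)$ is legitimately a \emph{topological} Brandt $\lambda_i^0$-extension of $G_i^0$ in the class of Hausdorff topological semigroups, so that Proposition~\ref{proposition-3.4} is applicable. This reduces to verifying Condition~(b) of Definition~\ref{def2} for the subspace topology on $S_{\alpha_i,\alpha_i}$, which holds by construction since $G_i^0$ is \emph{defined} to be $S_{\alpha_i,\alpha_i}$; the accompanying algebraic isomorphism $B_{\lambda_i}(G_i)\cong B_{\lambda_i}^0(G_i^0)$ is immediate from the definitions. Everything else is a routine matching of the numbered conclusions of Theorem~\ref{theorem-2.12} with those of the present statement.
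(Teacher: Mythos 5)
Your proposal is correct and follows exactly the route the paper takes: the paper's entire ``proof'' is the single line that Theorem~\ref{theorem-2.12} and Proposition~\ref{proposition-3.4} imply the result, and your write-up supplies precisely that derivation --- verifying Condition~(4) of Theorem~\ref{theorem-2.12} via joint continuity, identifying $B_{\lambda_i}(G_i)$ with $B^0_{\lambda_i}(G_i^0)$ so that Proposition~\ref{proposition-3.4} yields finiteness of each $\lambda_i$, and upgrading $G_i$ from semitopological to paratopological by restricting the jointly continuous multiplication. All three steps are sound, so nothing is missing.
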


The following example shows that statement $(v)$ of Theorem~\ref{theorem-3.5} does not necessarily hold when the semigroup $S$ is functionally Hausdorff and countably pracompact but it is not semiregular.

\begin{example}\label{example-3.6}
In \cite[Example 3]{Ravsky-arxiv1003.5343v5} a functionally Hausdorff $\omega$-precompact first countable paratopological group $(G,\tau_R)$ is constructed such that each power of $(G,\tau_R)$ is countably pracompact but $(G,\tau_R)$ is not a topological group. Moreover, the group $(G,\tau_R)$ contains an open dense subsemigroup $S$. Let $\mathscr{I}$ be an infinite set of indices. For any $i\in\mathscr{I}$ let $\lambda_i$ be any finite cardinal $\geqslant 1$. Let $B_{\lambda_i}(G)$ be the algebraic Brandt $\lambda_i$-extension of the algebraic group $G$ for each $i\in\mathscr{I}$. Put $R(G,\{\lambda_i\}_{\i\in\mathscr{I}})=\sum_{i\in\mathscr{I}}B_{\lambda_i}(G)$. Also for any subset $C$ of the group $G$ and all $i,i_1,\ldots,i_k\in\mathscr{I}$, $k\in\mathbb{N}$, put
\begin{equation*}
\begin{split}
  B_{\lambda_i}(C) &  =\{0\}\cup\left\{(\alpha_i,x,\beta_i)\in B_{\lambda_i}(G)\colon x\in C, \alpha_i,\beta_i\in\lambda_i\right\}, \qquad R(C,\{\lambda_i\}_{\i\in\mathscr{I}})=\sum_{i\in\mathscr{I}}B_{\lambda_i}(C) \\
    \hbox{and}&\qquad U(i_1,\ldots,i_k)=R(S,\{\lambda_i\}_{\i\in\mathscr{I}})\setminus \big((B_{\lambda_1}(S))^*\cup\cdots\cup (B_{\lambda_k}(S))^*\big).
\end{split}
\end{equation*}

We define the topology $\tau_{RB}$ on $R(G,\{\lambda_i\}_{\i\in\mathscr{I}})$ in the following way:
\begin{enumerate}
  \item[$(i)$] if $\mathscr{B}_{e}$ is a base of the topology $\tau_{R}$ at the unit $e$ of the group $G$ then the family
      \begin{equation*}
        \mathscr{B}_{(\beta_i,x,\gamma_i)}=\left\{(\beta_i,xU,\gamma_i)\colon U\in \mathscr{B}_{e} \right\}
      \end{equation*}
      is a base of the topology $\tau_{RB}$ at the point $(\beta_i,x,\gamma_i)\in G_{\beta_i,\gamma_i}\subseteq B_{\lambda_{i}}(G_i)$, for all $\beta_i,\gamma_i\in \lambda_i$;
  \item[$(ii)$] the family $\mathscr{B}_{0}= \left\{U(i_1,\ldots,i_k)\colon i_1,\ldots,i_k\in\mathscr{I}\right\}$
      is a base of the topology at zero of $R(G,\lambda_i,\mathscr{I})$.
\end{enumerate}

It is obvious that $(R(G,\{\lambda_i\}_{\i\in\mathscr{I}}),\tau_{RB})$ is a Hausdorff topological space. Since $S$ is a dense open subsemigroup of $(G,\tau_R)$ we conclude that $(R(G,\{\lambda_i\}_{\i\in\mathscr{I}}),\tau_{RB})$ is not semiregular. Also, since the space $(G,\tau_R)$ is functionally Hausdorff and $G_{\beta_i,\gamma_i}$ is an open-and-closed subspace of $(R(G,\{\lambda_i\}_{\i\in\mathscr{I}}),\tau_{RB})$, for all $\beta_i,\gamma_i\in \lambda_i$, the space $(R(G,\{\lambda_i\}_{\i\in\mathscr{I}}),\tau_{RB})$ is functionally Hausdorff too.

Now, the definition of the semigroup $R(G,\{\lambda_i\}_{\i\in\mathscr{I}})$ implies that
\begin{equation*}
    U(i_1,\ldots,i_k)\cdot B_{\lambda_m}(G)=B_{\lambda_m}(G)\cdot U(i_1,\ldots,i_k)=\{0\},
\end{equation*}
for each $i_m\in\{i_1,\ldots,i_k\}$ and $U(i_1,\ldots,i_k)\cdot U(i_1,\ldots,i_k)\subseteq U(i_1,\ldots,i_k)$ for all $i_1,\ldots,i_k\in\mathscr{I}$, $k\in\mathbb{N}$, because $S$ is a subsemigroup of the group $G$. This and the continuity of multiplication in $(G,\tau_R)$ imply that multiplication in $(R(G,\{\lambda_i\}_{\i\in\mathscr{I}})$ is continuous.

We claim that the topological space $(R(G,\{\lambda_i\}_{\i\in\mathscr{I}}),\tau_{RB})$ is countably pracompact. Indeed, there exists a set $A\subset S\subset G$ such that
$A$ is dense in the space $(G,\tau_R)$ and this space is countably compact at $A$
~\cite[Example 3]{Ravsky-arxiv1003.5343v5}. Then the set $R(A,\{\lambda_i\}_{\i\in\mathscr{I}})$ is dense in $R(G,\{\lambda_i\}_{\i\in\mathscr{I}})$. We claim that the space $R(G,\{\lambda_i\}_{\i\in\mathscr{I}})$ is countably compact at $R(A,\lambda_i,\mathscr{I})$. Indeed, let $A'$ be an arbitrary countable infinite subset of the set  $R(A,\{\lambda_i\}_{\i\in\mathscr{I}})$. If $0$ is not an accumulation point of the set $A'$ then there exist indices $i_1,\ldots,i_k\in\mathscr{I}$ such that the set $U(i_1,\ldots,i_k)\cap A'$ is finite. Since $A\subset S$ then $A'\subset
R(A,\{\lambda_i\}_{\i\in\mathscr{I}})\subset R(S,\{\lambda_i\}_{\i\in\mathscr{I}})$ and the set $A'\cap
\big((B_{\lambda_1}(S))^*\cup\cdots\cup (B_{\lambda_k}(S))^*\big)\subset A'\setminus U(i_1,\ldots,i_k)$ is infinite. Since for each $1\le j\le k$ the cardinal $\lambda_{i_j}$ is finite, there exists an index $1\le j\le k$ and elements $\alpha,\beta\in\lambda_{i_j}$ such that the intersection $A'\cap S_{\alpha,\beta}\subset B_{\lambda_{i_j}}(A)\subset B_{\lambda_{i_j}}(G)$ is infinite. Since the space $G$ is countably compact at $A$, $B_{\lambda_{i_j}}(G)$ is countable compact at $B_{\lambda_{i_j}}(A)$. Therefore the set $A'\cap S_{\alpha,\beta}$ has an accumulation point in $B_{\lambda_{i_j}}(G)$.
\end{example}

Unlike functional Hausdorffness the quasiregularity guaranties stronger properties of primitive inverse feebly compact topological semigroups and this follows from the next two propositions.

\begin{theorem}\label{theorem-3.7}
Let $S$ be a quasiregular primitive inverse feebly compact topological semigroup and $S$ be the orthogonal sum of the family $\{B_{\lambda_{i}}(G_i)\colon i\in\mathscr{I}\}$ of topological Brandt semigroups with zeros. Then the family
      \begin{equation*}\label{eq-3.2}
      \begin{split}
        \mathscr{B}_{0}= \big\{S\setminus& \big((G_{i_1})_{\alpha_{i_1},\beta_{i_1}}\cup \cdots\cup (G_{i_k})_{\alpha_{i_k},\beta_{i_k}}\big)\colon i_1,\ldots,i_k\in\mathscr{I}, \alpha_{i_k},\beta_{i_k}\in\lambda_{i_k},\\
          & k\in\mathbb{N},  \{(\alpha_{i_1},\beta_{i_1}),\ldots,(\alpha_{i_k},\beta_{i_k})\} \hbox{~is finite}\big\}
      \end{split}
      \end{equation*}
is a base of the topology at zero of $S$.
\end{theorem}

\begin{proof}
Suppose for a contradiction that there exists an open subset $W\ni 0$ of $S$ such that $U\nsubseteq W$ for any $U\in\mathscr{B}_{0}$. There exists an open neighbourhood $V\subseteq W$ of zero in $S$ such that $V\cdot V\cdot V\subseteq W$.
Since every non-zero maximal subgroup of $S$ is an open-and-closed subset of $S$ and the space $S$ is feebly compact, there exist finitely many indices $i_1,\ldots,i_k\in\mathscr{I}$ such that $V\cap (S\setminus\big((B_{\lambda_1}(S))^*\cup\cdots\cup (B_{\lambda_k}(S))^*\big))$ is a dense open subset of the space $S\setminus\big((B_{\lambda_1}(S))^*\cup\cdots\cup (B_{\lambda_k}(S))^*\big)$. Then every non-zero maximal subgroup of $S$ is a quasi-regular space and hence by Proposition~3 of \cite{Ravsky-arxiv1406.2001} (see also \cite{RavskyReznichenko2002}) every maximal subgroup of $S$ is a topological group. Now, Proposition~2.5 of \cite{GutikPavlyk2013??} implies that
\begin{equation*}
V\cdot V\cdot V\supseteq S\setminus\left((B_{\lambda_1}(S))^*\cup\cdots\cup (B_{\lambda_k}(S))^*\right)\not\subseteq W.
\end{equation*}
The obtained contradiction implies the required conclusion.
\end{proof}

Since by Proposition~3 of \cite{Ravsky-arxiv1406.2001} inversion on a quasiregular feebly compact paratopological group is continuous, Proposition~\ref{proposition-2.20}, Theorems~\ref{theorem-3.5} and \ref{theorem-3.7} imply the following corollary:

\begin{corollary}\label{corollary-3.8}
Inversion on a quasi-regular primitive inverse feebly compact topological semigroup $S$ is continuous and hence $S$ is Tychonoff.
\end{corollary}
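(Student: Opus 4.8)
The plan is to combine the structural description furnished by Theorem~\ref{theorem-3.5} with the explicit neighbourhood base at zero furnished by Theorem~\ref{theorem-3.7}, and then verify continuity of the inversion $\iota\colon S\to S$, $\iota(x)=x^{-1}$, separately at the non-zero points and at the zero. First I would record that, being a primitive inverse semigroup, $S$ is algebraically an orthogonal sum $\sum_{i\in\mathscr{I}}B_{\lambda_i}(G_i)$ of Brandt semigroups (Theorem~II.4.3 of \cite{Petrich1984}), so Theorem~\ref{theorem-3.5} applies: every cardinal $\lambda_i$ is finite, every maximal subgroup of $S$ is open-and-closed, and each maximal subgroup is a pseudocompact paratopological group. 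On each Brandt summand inversion acts by $(\alpha_i,g,\beta_i)^{-1}=(\beta_i,g^{-1},\alpha_i)$ and fixes $0$, so $\iota$ is an involutive bijection of $S$ that interchanges the open pieces $(G_i)_{\alpha_i,\beta_i}$ and $(G_i)_{\beta_i,\alpha_i}$.

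Next I would upgrade each maximal subgroup to a topological group. By Corollary~\ref{corollary-2.3}$(i)$ every $(G_i)_{\alpha_i,\beta_i}$ is an open subset of $S$, and since quasiregularity passes to open subspaces, each maximal subgroup of $S$ is a quasiregular pseudocompact paratopological group, hence a topological group by Proposition~3 of \cite{Ravsky-arxiv1003.5343v5} (this step is in fact already carried out inside the proof of Theorem~\ref{theorem-3.7}). In particular the group inversion on each $G_i$ is continuous.

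I would then check continuity of $\iota$ at a non-zero point $(\beta_i,x,\gamma_i)\in(G_i)_{\beta_i,\gamma_i}$. Because $(G_i)_{\beta_i,\gamma_i}$ is open in $S$ and is carried homeomorphically onto the open set $(G_i)_{\gamma_i,\beta_i}$ by the coordinate homeomorphisms established in the proofs of Theorems~\ref{theorem-2.12} and \ref{theorem-3.5}, the restriction $\iota|_{(G_i)_{\beta_i,\gamma_i}}$ coincides, up to these homeomorphisms, with the inversion of the topological group $G_i$; hence it is continuous, and continuity of $\iota$ at every non-zero point follows.

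Finally I would treat the point $0$, which I expect to be the only genuine obstacle. Here the explicit base $\mathscr{B}_0$ provided by Theorem~\ref{theorem-3.7} is decisive: a basic neighbourhood of zero has the form $W=S\setminus\bigl((G_{i_1})_{\alpha_{i_1},\beta_{i_1}}\cup\cdots\cup(G_{i_k})_{\alpha_{i_k},\beta_{i_k}}\bigr)$, and since $\iota$ interchanges each $(G_{i_j})_{\alpha_{i_j},\beta_{i_j}}$ with $(G_{i_j})_{\beta_{i_j},\alpha_{i_j}}$ and fixes $0$, the set $V=S\setminus\bigl((G_{i_1})_{\beta_{i_1},\alpha_{i_1}}\cup\cdots\cup(G_{i_k})_{\beta_{i_k},\alpha_{i_k}}\bigr)$ again belongs to $\mathscr{B}_0$ and satisfies $\iota(V)\subseteq W$. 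Thus $\iota$ is continuous at $0$, which completes the proof. The crux is indeed the behaviour at zero: Examples~\ref{example-2.13} and \ref{example-3.6} show that without a well-controlled base at $0$ the topology near the zero need not behave, so the whole argument hinges on the availability of the base $\mathscr{B}_0$ guaranteed by quasiregularity through Theorem~\ref{theorem-3.7}.
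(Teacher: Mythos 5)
Your proposal is correct and follows essentially the same route as the paper: the paper derives the corollary in one line from Theorem~\ref{theorem-3.5}, Theorem~\ref{theorem-3.7}, and the fact that inversion on a quasiregular pseudocompact paratopological group is continuous (equivalently, that the maximal subgroups are topological groups, which is exactly the step carried out inside the proof of Theorem~\ref{theorem-3.7} via Proposition~3 of \cite{Ravsky-arxiv1003.5343v5}). Your write-up simply makes explicit the two verifications the paper leaves implicit --- continuity at non-zero points via the coordinate homeomorphisms and group inversion, and continuity at zero via the fact that inversion permutes the sets $(G_i)_{\alpha_i,\beta_i}$ and so preserves the base $\mathscr{B}_0$.
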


\begin{remark}\label{remark-3.9}
Example~1 of \cite{BanakhGutik2004} shows that inversion on a quasi-regular inverse countably compact topological semigroup in which maximal subgroups are topological groups is not continuous. Also Corollary~\ref{corollary-3.8} and Proposition~2.8 from \cite{GutikPavlyk2013??} imply that a quasi-regular primitive inverse feebly compact topological semigroup is Tychonoff.
\end{remark}

Also, Corollary~\ref{corollary-3.8} implies

\begin{corollary}\label{corollary-3.8a}
Every quasi-regular feebly compact Brandt topological semigroup is a Tychonoff topological inverse semigroup.
\end{corollary}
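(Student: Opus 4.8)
The plan is to deduce this as a direct consequence of Corollary~\ref{corollary-3.8} together with the structural results already established. Let $S$ be a quasi-regular pseudocompact Brandt topological semigroup. By the characterization recalled in the introduction (Theorem~II.3.5 of \cite{Petrich1984}), $S$ is algebraically a Brandt semigroup, hence a completely $0$-simple inverse semigroup; in particular $S$ is a \emph{primitive} inverse semigroup whose idempotents other than the zero are all primitive, and $S$ is an orthogonal sum consisting of the single Brandt semigroup $B_\lambda(G)$ for some group $G$ and some cardinal $\lambda\geqslant 1$. Thus $S$ falls under the hypotheses of Corollary~\ref{corollary-3.8}: it is a quasi-regular primitive inverse pseudocompact topological semigroup.

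First I would apply Corollary~\ref{corollary-3.8} to conclude that the inversion map on $S$ is continuous. This is the decisive step, and everything else is bookkeeping about what ``topological inverse semigroup'' requires. Recall from the preliminaries that a \emph{topological inverse semigroup} is precisely an inverse topological semigroup whose inversion is continuous. Since $S$ is by hypothesis a topological semigroup (so the multiplication is jointly continuous) and is algebraically inverse, the only missing ingredient for $S$ to qualify as a topological inverse semigroup is exactly the continuity of inversion, which Corollary~\ref{corollary-3.8} now supplies.

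I do not expect a genuine obstacle here, since the real work has already been carried out in Corollary~\ref{corollary-3.8}; the one point that deserves a sentence of care is verifying that a Brandt topological semigroup genuinely satisfies the quantifier ``primitive inverse'' demanded by Corollary~\ref{corollary-3.8}. This is immediate from the fact that a Brandt semigroup is a completely $0$-simple inverse semigroup and hence, by definition, its non-zero idempotents are primitive, so $S$ is a (one-summand) orthogonal sum of Brandt semigroups in the sense used throughout Section~2. Once this identification is made, the corollary follows by simply combining the continuity of the multiplication (given) with the continuity of the inversion (from Corollary~\ref{corollary-3.8}), which together are exactly the definition of a topological inverse semigroup.
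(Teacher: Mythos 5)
Your proof is correct and follows exactly the paper's route: the paper derives this corollary directly from Corollary~\ref{corollary-3.8}, using precisely the observation you make, namely that a Brandt semigroup is a primitive inverse semigroup (being completely $0$-simple inverse), so that the continuity of inversion supplied by Corollary~\ref{corollary-3.8} is the only ingredient missing from the definition of a topological inverse semigroup.
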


Theorem~\ref{theorem-2.11} implies the following:

\begin{theorem}\label{theorem-3.10}
Let $S$ be a Hausdorff primitive inverse countably compact topological semigroup and $S$ be the orthogonal sum of the family $\{B_{\lambda_{i}}(G_i)\colon i\in\mathscr{I}\}$ of topological Brandt semigroups with zeros. Then  the family
      \begin{equation*}\label{eq-3.1}
      \begin{split}
        \mathscr{B}_{0}= \big\{S\setminus& \big((G_{i_1})_{\alpha_{i_1},\beta_{i_1}}\cup \cdots\cup (G_{i_k})_{\alpha_{i_k},\beta_{i_k}}\big)\colon i_1,\ldots,i_k\in\mathscr{I}, \alpha_{i_k},\beta_{i_k}\in\lambda_{i_k},\\
          & k\in\mathbb{N},  \{(\alpha_{i_1},\beta_{i_1}),\ldots,(\alpha_{i_k},\beta_{i_k})\} \hbox{~is finite}\big\}
      \end{split}
      \end{equation*}
      is a base of the topology at zero of $S$.
\end{theorem}

Definition~\ref{def1}, Theorem~\ref{theorem-3.10} and arguments presented in the proof of Theorem~\ref{theorem-2.11} imply the following corollary:

\begin{corollary}\label{corollary-3.11}
Inversion on a Hausdorff primitive inverse countably compact topological semigroup $S$ is continuous if and only if every maximal subgroup of $S$ is a topological group.
\end{corollary}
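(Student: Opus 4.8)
The plan is to prove the two implications separately, using the structure theory already assembled. By \cite[Theorem~II.4.3]{Petrich1984} the semigroup $S$ is an orthogonal sum $\sum_{i\in\mathscr{I}}B_{\lambda_{i}}(G_i)$ of Brandt semigroups, and since $S$ is a \emph{topological} (not merely semitopological) semigroup, each subsemigroup $S_{\alpha_i,\alpha_i}=(G_i)_{\alpha_i,\alpha_i}\cup\{0\}$ is automatically a topological semigroup in the subspace topology; thus condition~(4) of Theorem~\ref{theorem-2.11} holds for every $i\in\mathscr{I}$ and Theorem~\ref{theorem-2.11} applies. In particular its assertion~$(iii)$ gives a base of the topology at each non-zero point, and Theorem~\ref{theorem-3.10} (which restates assertion~$(iv)$) gives the base $\mathscr{B}_0$ at zero. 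For the necessity, I would note that if the inversion $\iota\colon S\to S$, $s\mapsto s^{-1}$, is continuous, then any maximal subgroup $G$ of $S$ inherits from $S$ both a continuous multiplication and a continuous inversion as restrictions of the corresponding maps on $S$ (the semigroup inverse of an element of $G$ is its group inverse), so $G$ is a topological group. This direction is immediate and uses neither Theorem~\ref{theorem-2.11} nor Theorem~\ref{theorem-3.10}.

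For the sufficiency, assume every maximal subgroup of $S$ is a topological group; I must verify continuity of $\iota$ at every point. Consider first a non-zero point $(\beta_i,x,\gamma_i)\in(G_i)_{\beta_i,\gamma_i}$. By Corollary~\ref{corollary-2.3} the set $(G_i)_{\beta_i,\gamma_i}$ is open in $S$, and by the description in Theorem~\ref{theorem-2.11}$(iii)$ a basic neighbourhood $(\beta_i,x,\alpha_i)\cdot U\cdot(\alpha_i,e_i,\gamma_i)$ equals $\{(\beta_i,xu,\gamma_i)\colon (\alpha_i,u,\alpha_i)\in U\}$, i.e.\ a left translate of a neighbourhood of the unit; hence $(\beta_i,y,\gamma_i)\mapsto y$ is a homeomorphism of $(G_i)_{\beta_i,\gamma_i}$ onto the topological group $G_i$ (and similarly for $(G_i)_{\gamma_i,\beta_i}$). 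Since $\iota$ carries the open set $(G_i)_{\beta_i,\gamma_i}$ into the open set $(G_i)_{\gamma_i,\beta_i}$ and, under these identifications, becomes the map $y\mapsto y^{-1}$, its continuity at $(\beta_i,x,\gamma_i)$ is exactly the continuity of inversion in the topological group $G_i$.

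It remains to check continuity of $\iota$ at $0$, and here I would use Theorem~\ref{theorem-3.10}. The inversion is an involutive bijection of $S$ that fixes $0$ and sends $(G_{i_j})_{\alpha_{i_j},\beta_{i_j}}$ onto $(G_{i_j})_{\beta_{i_j},\alpha_{i_j}}$, so for each basic neighbourhood we have
\[
\iota\!\left(S\setminus\bigcup_{j=1}^{k}(G_{i_j})_{\alpha_{i_j},\beta_{i_j}}\right)=S\setminus\bigcup_{j=1}^{k}(G_{i_j})_{\beta_{i_j},\alpha_{i_j}},
\]
which again belongs to $\mathscr{B}_0$. Thus $\iota$ maps $\mathscr{B}_0$ bijectively onto itself, and continuity at $0$ follows from Theorem~\ref{theorem-3.10}.

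The structure of the argument makes the role of the hypothesis transparent, and this is where I expect the only genuine subtlety to lie: continuity of $\iota$ at $0$ is \emph{free}, coming solely from the symmetry of the base $\mathscr{B}_0$ under the transposition $(\alpha,\beta)\mapsto(\beta,\alpha)$, whereas the assumption that the maximal subgroups are topological groups is used \emph{only} at the non-zero points, where $\iota$ restricts to the group inversion of each component. The main obstacle is therefore not any hard estimate but the bookkeeping: one must confirm that condition~(4) of Theorem~\ref{theorem-2.11} is indeed available (because $S$ is a full topological semigroup), and that the translate description of the base in Theorem~\ref{theorem-2.11}$(iii)$ coincides, after the coordinate identification, with the topology of $G_i$ as a topological group.
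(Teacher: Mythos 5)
Your proof is correct and takes essentially the same route as the paper, whose own proof is a one-line derivation from Definition~\ref{def1}, Theorem~\ref{theorem-3.10} and the arguments in the proof of Theorem~\ref{theorem-2.11}. You have simply expanded those same ingredients --- the immediate restriction argument for necessity, the translate base of Theorem~\ref{theorem-2.11}$(iii)$ at non-zero points (available because condition~(4) holds automatically for a topological semigroup), and the symmetry of the base $\mathscr{B}_0$ at zero from Theorem~\ref{theorem-3.10} --- into a detailed verification.
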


\begin{remark}\label{remark-3.12}
The second listed author, using a result of P. Koszmider, A. Tomita and S. Watson \cite{KoszmiderTomitaWatson2000}, constructed under \textsf{MA} an example of a Hausdorff countably compact paratopological group failing to be a topological group \cite{Ravsky2003, Ravsky-arxiv1003.5343v5}.
\end{remark}

\section{Products of feebly compact inverse primitive semitopological semigroups and their Stone-\v{C}ech compactifications}

The counterparts of the following four statements for Tychonoff spaces
are proved in \cite[Section~3.10]{Engelking1989}. But because the proofs which are based
on continuous functions are not applicable for our case,
we present straightforward proofs here.


\begin{proposition}\label{proposition-2.15-0}
Let $X$ be a feebly compact topological space and $Y$ be a
sequentially compact topological space. Then $X\times Y$ is feebly compact.
\end{proposition}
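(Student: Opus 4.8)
The plan is to show that every continuous real-valued function on $X \times Y$ is bounded, or equivalently, to show that every locally finite family of nonempty open sets in $X \times Y$ is finite; I would work with the latter characterization since the product is not assumed Tychonoff a priori. Suppose toward a contradiction that $\{W_n : n \in \mathbb{N}\}$ is an infinite (countable) locally finite family of nonempty open subsets of $X \times Y$. For each $n$ choose a basic open box $U_n \times V_n \subseteq W_n$ with $U_n$ open nonempty in $X$ and $V_n$ open nonempty in $Y$. Since I want to exploit sequential compactness of $Y$, the first step is to pick, for each $n$, a point $y_n \in V_n$, and then extract from the sequence $\{y_n\}$ a subsequence converging to some $y_0 \in Y$; pass to the corresponding subfamily so that, after reindexing, $y_n \to y_0$ and still $y_n \in V_n$ for all $n$.

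Next I would push the problem down to $X$. The idea is that near the fixed limit point $y_0$, the sets $U_n$ must witness a failure of pseudocompactness of $X$. Concretely, consider the open sets $U_n \subseteq X$ and argue that $\{U_n\}$ has an infinite locally finite subfamily in $X$, contradicting pseudocompactness of $X$. To set this up, take any point $x \in X$: by local finiteness of $\{W_n\}$ at the point $(x, y_0)$, there is an open box $O \times P \ni (x,y_0)$ meeting only finitely many $W_n$. Since $y_n \to y_0$, all but finitely many $y_n$ lie in $P$; for those $n$, if $x$ were in $U_n$ then $(x,y_n) \in (U_n \times V_n) \cap (O \times P)$ would force $O \times P$ to meet $W_n$. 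Thus $x \in U_n$ for only finitely many $n$ (up to a finite correction), which shows the family $\{U_n\}$ is point-finite, and refining the argument with the neighborhood $O$ shows it is in fact locally finite in $X$.

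The main obstacle is making the last reduction genuinely give \emph{local} finiteness rather than mere point-finiteness of $\{U_n\}$ in $X$, and ensuring the chosen subfamily stays infinite throughout. The clean way is: fix $x \in X$ and the box $O \times P$ as above witnessing local finiteness of $\{W_n\}$ at $(x,y_0)$; discard the finitely many indices $n$ for which $W_n$ meets $O \times P$, and discard the finitely many $n$ with $y_n \notin P$. For every remaining index $n$ the set $O$ must miss $U_n$, because a common point would again produce $(x', y_n) \in W_n \cap (O \times P)$. Hence $O$ is a neighborhood of $x$ meeting only finitely many $U_n$, so $\{U_n : n \in \mathbb{N}\}$ is locally finite in $X$. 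Since the original family was infinite and sequential compactness only removed a subsequence (still infinite) while the two finite discards at each point are harmless for local finiteness, $\{U_n\}$ is an infinite locally finite family of nonempty open subsets of $X$, contradicting the pseudocompactness of $X$. This contradiction proves that $X \times Y$ is pseudocompact.
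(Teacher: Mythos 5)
Your proof is correct and takes essentially the same approach as the paper's: shrink each member of the family to an open box $U_n\times V_n$, use sequential compactness of $Y$ to extract a convergent subsequence of points $y_n\in V_n$, and then invoke pseudocompactness of $X$ on the sets $U_n$ near the limit point $y_0$. The only difference is organizational --- the paper argues directly that some point $(x,y)$ clusters the original family, whereas you run the contrapositive by transferring local finiteness from $X\times Y$ down to the family $\{U_n\}$ in $X$; the underlying observation is the same.
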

\begin{proof} We have to prove that any infinite family $\left\{U_n\colon n\in\mathbb N\right\}$
of non-empty open subsets of the space $X\times Y$ is not locally finite. For this purpose we
shall find a point $(x,y)\in X\times Y$ such that every open neighbourhood of $(x,y)$ intersects infinitely many elements of the family $\left\{U_n\colon n\in\mathbb N\right\}$.
Let $n$ be a positive integer.
There exist non-empty open subsets
$V_n\subset X$ and $W_n\subset Y$ such that $V_n\times W_n\subset U_n$.
Choose a point $y_n\in\ W_n$.
Since the space $Y$ is sequentially compact, the
sequence $\left\{y_{n}:n\in\mathbb N\right\}$ has a
subsequence $\left\{y_{n_k}:k\in\mathbb N\right\}$ converging to a point $y\in Y$.
Since the space $X$ is feebly compact, there exists a point $x\in X$
such that every open neighbourhood of the point $x$ in $X$ intersects $V_{n_k}$ for infinitely many numbers $k$. Then each open neighbourhood of the point $(x,y)\in X\times Y$ intersects $U_n$ for infinitely many numbers $n$.
Hence $(x,y)$ is the required point.
\end{proof}

\begin{proposition}\label{proposition-2.15}
Let $X$ be a Hausdorff feebly compact topological space. Then $X\times Y$ is feebly compact for any feebly compact $k$-space $Y$.
\end{proposition}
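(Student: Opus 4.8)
The plan is to verify pseudocompactness of $X\times Y$ through the cluster-point form of the criterion recalled in the introduction (a Hausdorff space is pseudocompact if and only if every locally finite family of non-empty open sets is finite): equivalently, a Hausdorff space $Z$ is pseudocompact if and only if every sequence $(O_n)$ of non-empty open subsets of $Z$ has a \emph{cluster point}, a point every neighbourhood of which meets $O_n$ for infinitely many $n$. So let $(U_n)_{n\in\mathbb N}$ be a sequence of non-empty open subsets of $X\times Y$ and choose basic open boxes $V_n\times W_n\subseteq U_n$ with $\varnothing\ne V_n\subseteq X$ and $\varnothing\ne W_n\subseteq Y$. The entire argument then reduces to one claim: \emph{there is a compact subset $K\subseteq Y$ meeting $W_n$ for infinitely many $n$.}

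Granting the claim, I would finish as follows. First, the product $X\times K$ of the pseudocompact space $X$ with the compact space $K$ is pseudocompact: given a locally finite family of non-empty open sets in $X\times K$, each fibre $\{x\}\times K$ is compact and so meets only finitely many members, whence by the tube lemma every $x$ has a neighbourhood $N_x$ with $N_x\times K$ meeting only finitely many members; the projections to $X$ therefore form a locally finite family of non-empty open sets in the pseudocompact space $X$ and must be finite. Now the sets $V_n\times(W_n\cap K)$, for the infinitely many $n$ with $W_n\cap K\ne\varnothing$, are non-empty and open in $X\times K$, so this sequence has a cluster point $(x_0,y_0)\in X\times K$. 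Any basic neighbourhood $V'\times W'$ of $(x_0,y_0)$ in $X\times Y$ satisfies $(V'\times(W'\cap K))\cap(V_n\times(W_n\cap K))\ne\varnothing$ for infinitely many $n$, whence $V'\cap V_n\ne\varnothing$ and $W'\cap W_n\ne\varnothing$, so $V'\times W'$ meets $U_n\supseteq V_n\times W_n$ for infinitely many $n$. Thus $(x_0,y_0)$ is a cluster point of $(U_n)$ in $X\times Y$, as needed.

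It remains to prove the claim, and this is where the two hypotheses on $Y$ must be combined. Suppose, to the contrary, that every compact subset of $Y$ meets only finitely many of the $W_n$. Choosing $y_n\in W_n$, each compact $K\subseteq Y$ then contains only finitely many $y_n$; consequently, for every $A\subseteq\{y_n:n\in\mathbb N\}$ and every compact $K$ the set $A\cap K$ is finite, hence closed, so by the $k$-space property $A$ is closed in $Y$. Therefore $D=\{y_n:n\in\mathbb N\}$ is an infinite closed discrete subset of $Y$ (it is infinite, since otherwise some value would repeat infinitely often and its compact singleton would meet infinitely many $W_n$). Replacing each $W_n$ by $W_n\cap(Y\setminus(D\setminus\{y_n\}))$ we may assume $W_n\cap D=\{y_n\}$, so the $W_n$ are pairwise distinct. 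On the other hand $Y$ is pseudocompact, so $(W_n)$ has a cluster point and the closed set $F=\bigcap_{j}\operatorname{cl}_Y(\bigcup_{n\ge j}W_n)$ of all such cluster points is non-empty. Since only finitely many $W_n$ meet any given compact $K$, for large $j$ the open set $\bigcup_{n\ge j}W_n$ is disjoint from $K$; hence $F\subseteq\operatorname{cl}_Y(Y\setminus K)$ and therefore $F\cap\operatorname{int}_Y(K)=\varnothing$ for every compact $K$.

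The main obstacle is to turn these last facts into a contradiction, and it is exactly here that the full force of the $k$-space axiom is required. When $Y$ is locally compact the interiors of compact sets cover $Y$, so $F\cap\operatorname{int}_Y(K)=\varnothing$ for all $K$ already forces $F=\varnothing$, a contradiction; thus the locally compact case is immediate. In the general $k$-space case $F$ is confined to the set of points possessing no compact neighbourhood, and I expect to produce the contradiction by exploiting that the topology of $Y$ is the weak topology determined by its compact subspaces: the clustering of $(W_n)$ at a point of $F$ ought to be \emph{witnessed on a single compact set}, yielding a compact $K$ that meets infinitely many $W_n$ and contradicting the standing assumption. Making this witnessing step precise—in effect, establishing that in a pseudocompact $k$-space no infinite family of non-empty open sets can meet every compact set in only finitely many of its members—is the crux of the argument; once it is secured, the claim, and with it the proposition, follows.
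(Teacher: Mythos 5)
Your reduction is sound as far as it goes: the cluster-point criterion for pseudocompactness, the pseudocompactness of $X\times K$ for compact $K$ (via the tube lemma), and the passage from a cluster point of $\bigl(V_n\times(W_n\cap K)\bigr)$ in $X\times K$ to a cluster point of $(U_n)$ in $X\times Y$ are all correct. But the proposal is incomplete exactly where you say it is: the Claim---that in a pseudocompact $k$-space $Y$ every sequence $(W_n)$ of non-empty open sets admits a compact set meeting infinitely many $W_n$---is never proved, and it is not a loose end but the entire content of the proposition; the paper itself does not prove it either, its proof being a reduction to basic boxes followed by a citation of Lemma~3.10.12 of \cite{Engelking1989}, which supplies precisely this missing ingredient. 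Everything you derive before the admitted gap is true but too weak to force a contradiction. An infinite closed discrete set $D$ with all subsets closed is perfectly compatible with pseudocompactness (e.g.\ the Mr\'owka space $\Psi$ is pseudocompact, locally compact, and contains such a set); this is exactly what separates the pseudocompact case from the countably compact one, where your argument would indeed finish at that point. And the fact $F\cap\operatorname{int}_Y(K)=\varnothing$ for all compact $K$ can carry no information at all: in a nowhere locally compact pseudocompact $k$-space, such as the $\Sigma$-product of uncountably many copies of $[0,1]$ (countably compact and Fr\'echet--Urysohn), one has $\operatorname{int}_Y(K)=\varnothing$ for \emph{every} compact $K$, so every non-empty closed $F$ satisfies your condition vacuously.

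The mechanism you hope will close the gap---that clustering of $(W_n)$ at a point of $F$ ``ought to be witnessed on a single compact set''---is not available in $k$-spaces: the $k$-space axiom determines which sets are \emph{closed}, not which points lie in the closure of a given set; the closure operator is recovered only by transfinitely iterating $A\mapsto\bigcup_K\operatorname{cl}(A\cap K)$, and spaces in which clustering is witnessed on a single compact set form a genuinely smaller class. The standard proof of the Claim uses the $k$-space property in a different way. Assuming every compact set meets only finitely many $W_n$, pick $y_n\in W_n$ and, using complete regularity of $Y$ (which is built into the notion of pseudocompactness in \cite{Engelking1989}, whose lemma the paper invokes), continuous functions $f_n\colon Y\to[0,1]$ with $f_n(y_n)=1$ and $f_n\equiv 0$ off $W_n$; then $f=\sup_n n f_n$ is finite everywhere by point-finiteness, is unbounded since $f(y_n)\geqslant n$, and its restriction to each compact $K$ is a maximum of finitely many continuous functions, hence continuous; since $Y$ is a $k$-space, $f$ is continuous on $Y$, contradicting pseudocompactness. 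Until a step of this kind is supplied, your argument establishes the proposition only for locally compact $Y$.
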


\begin{proof}
It suffice to observe that every non-empty open subset of the Cartesian product $X\times Y$ contains an open subset $U\times V$, where $U$ and $V$ are non-empty open subset of $X$ and $Y$, respectively, and then Lemma~3.10.12 of \cite{Engelking1989} implies the statement of the proposition.
\end{proof}

Proposition~\ref{proposition-2.15} implies the following two corollaries.

\begin{corollary}\label{corollary-2.16}
The Cartesian product $X\times Y$ of a feebly compact space $X$ and a compactum $Y$ is feebly compact.
\end{corollary}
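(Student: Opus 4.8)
The plan is to deduce this immediately from Proposition~\ref{proposition-2.15} once we verify that a compactum qualifies as a pseudocompact $k$-space. Recall that throughout the paper all spaces are Hausdorff, so the given pseudocompact space $X$ is in particular Hausdorff pseudocompact, which matches the hypothesis imposed on $X$ in Proposition~\ref{proposition-2.15}. Thus the whole task reduces to checking the two required properties of the second factor $Y$.

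First I would observe that a compactum, being a compact space, is pseudocompact: this is recorded in the preliminaries, where every compact space is noted to be countably compact, hence countably pracompact, hence pseudocompact.

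Next I would check that every compact space is a $k$-space. This is immediate from the definition adopted in the paper: for a compact space $Y$, the space $Y$ itself is one of the compact subspaces $K$ appearing in the $k$-space condition, so if $F\subseteq Y$ satisfies that $F\cap K$ is closed in $K$ for every compact $K\subseteq Y$, then in particular $F=F\cap Y$ is closed in $Y$; the converse implication is trivial. Hence $Y$ is a $k$-space.

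Having established that the compactum $Y$ is a pseudocompact $k$-space, I would apply Proposition~\ref{proposition-2.15} directly to conclude that $X\times Y$ is pseudocompact. There is essentially no obstacle here; the only point worth stating explicitly is the elementary observation that a compact space is a $k$-space, everything else being a direct appeal to the preliminaries and to the preceding proposition.
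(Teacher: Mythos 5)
Your proposal is correct and follows exactly the paper's route: the paper derives Corollary~\ref{corollary-2.16} directly from Proposition~\ref{proposition-2.15}, and your verification that a compactum is a pseudocompact $k$-space (compact $\Rightarrow$ countably compact $\Rightarrow$ pseudocompact, and compact $\Rightarrow$ $k$-space by taking $K=Y$ in the definition) supplies precisely the details the paper leaves implicit.
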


\begin{corollary}\label{corollary-2.17}
The Cartesian product $X\times Y$ of a feebly compact space $X$ and a feebly compact sequential space $Y$ is feebly compact.
\end{corollary}

\begin{proposition}\label{proposition-2.18}
Let $S$ be a primitive semitopological inverse semigroup such that every maximal subgroup of $S$ is a feebly compact paratopological (topological) group. Then $S$ is a continuous\footnote{not necessarily a homomorphic image} image of the product $\widetilde{E}_S\times G_S$, where $\widetilde{E}_S$ is a compact semilattice and $G_S$ is a feebly compact paratopological (topological) group provided one of the following conditions holds:
\begin{itemize}
  \item[(1)] $S$ is semiregular and feebly compact;
  \item[(2)] $S$ is Hausdorff and countably compact.
\end{itemize}
\end{proposition}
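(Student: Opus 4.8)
The plan is to start from the algebraic skeleton of $S$. Since $S$ is a primitive inverse semigroup, by \cite[Theorem~II.4.3]{Petrich1984} together with \cite[Theorem~II.3.5]{Petrich1984} it is an orthogonal sum $S=\sum_{i\in\mathscr I}B_{\lambda_i}(G_i)$ of Brandt semigroups over groups $G_i$, and its maximal subgroups are exactly the diagonal blocks $(G_i)_{\alpha,\alpha}\cong G_i$, which by hypothesis are pseudocompact paratopological (topological) groups. I would then take $G_S=\prod_{i\in\mathscr I}G_i$; this is a paratopological (topological) group, and it is pseudocompact by the Comfort--Ross theorem (in its paratopological generalisation due to Ravsky) quoted in the introduction. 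For the semilattice factor I would let $\widetilde E_S$ be the one-point compactification of the discrete set $\bigsqcup_{i\in\mathscr I}(\lambda_i\times\lambda_i)$, denote the added point by $\mathbf 0$, and equip it with the meet operation of the antichain with least element $\mathbf 0$ (so $x\wedge x=x$ and $x\wedge y=\mathbf 0$ for $x\neq y$). A short verification shows this operation is continuous in the one-point-compactification topology, so $\widetilde E_S$ is a compact Hausdorff topological semilattice.

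Next I would define the candidate surjection $f\colon\widetilde E_S\times G_S\to S$ by $f(\mathbf 0,\mathbf g)=0$ for all $\mathbf g$, and $f\big((i,\alpha,\beta),\mathbf g\big)=(\alpha,\pi_i(\mathbf g),\beta)\in (G_i)_{\alpha,\beta}$, where $\pi_i\colon G_S\to G_i$ is the coordinate projection. Surjectivity is immediate: $0$ is hit at $\mathbf 0$, and every non-zero $(\alpha,g,\beta)\in B_{\lambda_i}(G_i)$ is hit because $\pi_i$ is onto.

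The heart of the argument is continuity of $f$, which I would split according to the two kinds of points. At a point $\big((i,\alpha,\beta),\mathbf g\big)$ the first coordinate is isolated in $\widetilde E_S$, so it suffices to see that $\mathbf g\mapsto(\alpha,\pi_i(\mathbf g),\beta)$ is continuous into $(G_i)_{\alpha,\beta}$; this holds because $\pi_i$ is continuous, the block $(G_i)_{\alpha,\beta}$ is open in $S$ by Corollary~\ref{corollary-2.3}$(i)$, and by the separate-continuity (translation) argument at the beginning of the proof of Theorem~\ref{theorem-2.11} it is homeomorphic to $G_i$ via $(\alpha,g,\beta)\mapsto g$. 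The essential case is continuity at $(\mathbf 0,\mathbf g)$. Given a basic open neighbourhood $U(0)\ni 0$ in $S$, I would invoke Corollary~\ref{corollary-2.9} in case (1) and Corollary~\ref{corollary-2.5} in case (2): in both the set $F=\{(i,\alpha,\beta)\colon (G_i)_{\alpha,\beta}\nsubseteq U(0)\}$ is finite. Then $(\widetilde E_S\setminus F)\times G_S$ is a neighbourhood of $(\mathbf 0,\mathbf g)$ (as $F$ omits $\mathbf 0$ and is finite), and its image lies in $U(0)$, since every remaining block satisfies $(G_i)_{\alpha,\beta}\subseteq U(0)$ and $f$ sends $\mathbf 0$ to $0\in U(0)$. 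Hence $f$ is a continuous surjection and $S$ is a continuous image of $\widetilde E_S\times G_S$.

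The main obstacle I anticipate is exactly this last point: ensuring that the topology placed on $\widetilde E_S$ at its distinguished point matches the behaviour of neighbourhoods of $0$ in $S$. This is precisely what Corollaries~\ref{corollary-2.5} and \ref{corollary-2.9} provide --- that neighbourhoods of $0$ are ``cofinite in the blocks'' --- so the one-point-compactification topology is the correct choice and no finer structure on $\widetilde E_S$ is required. The remaining points are routine once both cases are reduced to this common finiteness statement: that the coordinate identification $(\alpha,g,\beta)\leftrightarrow g$ is a homeomorphism onto the open block $(G_i)_{\alpha,\beta}$, and that the Comfort--Ross/Ravsky theorem applies to the (possibly infinite) family $\{G_i\colon i\in\mathscr I\}$.
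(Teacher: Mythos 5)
Your proposal is correct and follows essentially the same route as the paper's own proof: the same one-point-compactified antichain semilattice $\widetilde{E}_S$, the same Tychonoff product $G_S$ (pseudocompact via the Comfort--Ross theorem and Ravsky's paratopological generalisation), the same coordinate-projection map $f$, and the same two-case continuity check (isolated points of $\widetilde{E}_S$ at non-zero blocks, finiteness of exceptional blocks at zero). The only cosmetic difference is that you invoke Corollaries~\ref{corollary-2.5} and \ref{corollary-2.9} directly for the finiteness statement at zero, whereas the paper routes through the fuller structure Theorems~\ref{theorem-2.11} and \ref{theorem-2.12}; the underlying content is the same.
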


\begin{proof}
We only consider the case when $S$ is a semiregular feebly compact space and every maximal subgroup of $S$ is a paratopological group because in case (2) the proof is similar.

By Theorem~\ref{theorem-2.12} the topological semigroup $S$ is topologically isomorphic to the orthogonal sum $\sum_{i\in\mathscr{I}}B_{\lambda_{i}}(G_i)$ of topological Brandt $\lambda_i$-extensions $B_{\lambda_i}(G_i)$ of feebly compact paratopological groups $G_i$ in the class of Hausdorff semitopological semigroups for some cardinals $\lambda_i\geqslant 1$ and the family defined by formula (\ref{eq-2.1}) in Theorem~\ref{theorem-2.12}$(iv)$ determines the base of a topology at zero of $S$.

Fix an arbitrary $i\in\mathscr{I}$. Then by Corollary~\ref{corollary-2.9} the space $E(B_{\lambda_i}(G_i))$ is compact. First we consider the case when the cardinal $\lambda_i$ is finite. Suppose that $|E(B_{\lambda_i}(G_i))|=n_i+1$ for some integer $n_i$. Then $\lambda_i=n_i\geqslant1$. On the set $E_i=(\lambda_i\times\lambda_i)\cup\{0\}$, where $0\notin\lambda_i\times\lambda_i$ we define multiplication in the following way
\begin{equation*}
(\alpha,\beta)\cdot(\gamma,\delta)=
    \left\{
       \begin{array}{cl}
         (\alpha,\beta), & \hbox{if~} (\alpha,\beta)=(\gamma,\delta);\\
         0, & \hbox{otherwise,}
       \end{array}
     \right.
\end{equation*}
and $0\cdot(\alpha,\beta)=(\alpha,\beta)\cdot 0=0\cdot 0=0$ for all $\alpha,\beta,\gamma,\delta\in\lambda_i$. Simple verifications show that $E_i$ with this multiplication is a semilattice and every non-zero idempotent of $E_i$ is primitive. If the cardinal $\lambda_i$ is infinite then on the set $E_i=(\lambda_i\times\lambda_i)\cup\{0\}$ we define the semilattice operation in a similar way

We denote by $\widetilde{E}_S$ the orthogonal sum $\sum_{i\in\mathscr{I}}E_i$. It is obvious that $\widetilde{E}_S$ is a semilattice and every non-zero idempotent of $\widetilde{E}_S$ is primitive. We determine on $\widetilde{E}_S$ the topology of the Alexandroff one-point compactification $\tau_A$: all non-zero idempotents of $\widetilde{E}_S$ are isolated points in $\widetilde{E}_S$ and the family
\begin{equation*}
    \mathscr{B}(0)=\big\{U\colon U\ni 0 \;\hbox{ and } \; \widetilde{E}_S\setminus U \; \hbox{ is finite}\big\}
\end{equation*}
is the base of the topology $\tau_A$ at zero $0\in\widetilde{E}_S$. Simple verifications show that $\widetilde{E}_S$ with the topology $\tau_A$ is a Hausdorff compact topological semilattice. Later we denote $(\widetilde{E}_S,\tau_A)$ by $\widetilde{E}_S$.

Let $G_S=\prod_{i\in\mathscr{I}}G_i$ be the direct product of feebly compact paratopological groups $G_i$, $i\in\mathscr{I}$, with the Tychonoff topology. Then Proposition~24 from \cite{Ravsky-arxiv1003.5343v5} implies that $G_S$ is a feebly compact paratopological group. Also by Corollary~\ref{corollary-2.16} we have that the product $\widetilde{E}_S\times G_S$ is a feebly compact space.

For every $i\in\mathscr{I}$ we denote by $\pi_i\colon G_S=\prod_{i\in\mathscr{I}}G_i\rightarrow G_i$ the projection on the $i$-th factor.

Now, for every $i\in\mathscr{I}$ we define the map $f_i\colon E_i\times G_S\rightarrow B_{\lambda_i}(G_i)$  by the formulae $f_i((\alpha,\beta),g)=(\alpha,\pi_i(g),\beta)$ and $f_i(0,g)=0_i$ is zero of the semigroup $B_{\lambda_i}(G_i)$, and put $f=\bigcup_{i\in\mathscr{I}}f_i$. It is obvious that the map $f\colon \widetilde{E}_S\times G_S\rightarrow S$ is well defined. The definition of the topology $\tau_A$ on $\widetilde{E}_S$ implies that for every $((\alpha,\beta),g)\in E_i\times G_i\subseteq \widetilde{E}_S\times G_i$  the set $\{(\alpha,\beta)\}\times G_i$ is open in $\widetilde{E}_S\times G_S$ and hence the map $f$ is continuous at the point $((\alpha,\beta),g)$. Also for every $U(0)=S\setminus\big(B_{\lambda_{i_1}}(G_{i_1})\cup
B_{\lambda_{i_2}}(G_{i_2})\cup\cdots\cup B_{\lambda_{i_n}}(G_{i_n})\big)^*$ the set $f^{-1}(U(0))=\big(\widetilde{E}_S\setminus\left((\lambda_{i_1}\times\lambda_{i_1}) \cup\ldots\cup(\lambda_{i_n}\times\lambda_{i_n})\right)\big)\times G_S$ is open in $\widetilde{E}_S\times G_S$, and hence the map $f$ is continuous.

We observe that in the case when all maximal subgroups of $S$ are topological groups, $G_S=\prod_{i\in\mathscr{I}}G_i$ is a pseudocompact topological group by Comfort--Ross theorem (see Theorem~1.4 in \cite{ComfortRoss1966}).

Also, in the case of a Hausdorff semitopological semigroup $S$ the proof is similar.
\end{proof}

The following result is an extension of the Comfort--Ross Theorem for primitive feebly compact semitopological inverse semigroups.

\begin{theorem}\label{theorem-2.19}
Let $\{S_j\colon j\in\mathscr{J}\}$ be a family of primitive semitopological inverse semigroups such that for each $j\in\mathscr{J}$ the semigroup $S_j$ is either semiregular feebly compact or Hausdorff countably compact, and suppose that each maximal subgroup of $S_j$ a feebly compact paratopological group. Then the direct product $\prod_{j\in\mathscr{J}}S_j$ with the Tychonoff topology is a feebly compact semitopological inverse semigroup.
\end{theorem}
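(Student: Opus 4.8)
The plan is to deduce pseudocompactness of the product not from any (false) productivity of pseudocompactness, but by routing each factor through Proposition~\ref{proposition-2.18}, which splits it into a compact part and a paratopological-group part. The compact parts multiply to a compactum by Tychonoff's theorem, the group parts multiply to a pseudocompact paratopological group by Ravsky's generalization of the Comfort--Ross theorem, and Corollary~\ref{corollary-2.16} recombines a pseudocompact space with a compactum into a pseudocompact space. A continuous-image argument then transfers pseudocompactness to $\prod_{j\in\mathscr{J}}S_j$, after which the algebraic and separate-continuity assertions are routine.

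First I would apply Proposition~\ref{proposition-2.18} to each $S_j$: its hypotheses are met verbatim, so there is a continuous surjection $f_j\colon\widetilde{E}_{S_j}\times G_{S_j}\to S_j$ with $\widetilde{E}_{S_j}$ a Hausdorff compact semilattice and $G_{S_j}$ a pseudocompact paratopological group. Forming the Tychonoff product $f=\prod_{j}f_j$ yields a continuous surjection of $\prod_{j}(\widetilde{E}_{S_j}\times G_{S_j})$ onto $\prod_{j}S_j$. Rearranging factors gives the canonical homeomorphism
\[
\prod_{j\in\mathscr{J}}\bigl(\widetilde{E}_{S_j}\times G_{S_j}\bigr)\;\cong\;\Bigl(\prod_{j\in\mathscr{J}}\widetilde{E}_{S_j}\Bigr)\times\Bigl(\prod_{j\in\mathscr{J}}G_{S_j}\Bigr).
\]
Here $\prod_{j}\widetilde{E}_{S_j}$ is a compactum by Tychonoff's theorem, while $\prod_{j}G_{S_j}$, being a product of pseudocompact paratopological groups, is pseudocompact by Proposition~24 of \cite{Ravsky-arxiv1003.5343v5}. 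Corollary~\ref{corollary-2.16} then makes the right-hand side pseudocompact, and a continuous surjective image of a pseudocompact space is pseudocompact: the preimages of a locally finite open cover form a locally finite open cover of the domain, and surjectivity keeps distinct nonempty members distinct, so an infinite locally finite open cover downstairs would pull back to one upstairs. Applying this to $f$ shows $\prod_{j}S_j$ is pseudocompact; and with coordinatewise operations $\prod_{j}S_j$ is an inverse semigroup whose inversion is well defined and whose multiplication is separately continuous (projections being continuous), hence a semitopological inverse semigroup.

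The hard part will be conceptual rather than computational. Since pseudocompactness is not preserved by arbitrary products, one must resist applying any product theorem directly to the $S_j$, or even to the mixed factors $\widetilde{E}_{S_j}\times G_{S_j}$. The decomposition supplied by Proposition~\ref{proposition-2.18} is precisely what isolates a genuinely productive class---pseudocompact paratopological groups, via Comfort--Ross and its Ravsky extension---from a compact remainder, after which Corollary~\ref{corollary-2.16} is the only tool that legitimately multiplies the surviving pseudocompact factor by a nontrivial second factor. The only remaining care is to confirm, as above, that continuous surjections preserve pseudocompactness in the locally-finite-open-cover sense used throughout the paper, rather than appealing to the bounded-real-function characterization, which would require Tychonoffness not assumed here.
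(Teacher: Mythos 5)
Your proposal is correct and follows essentially the same route as the paper's own proof: decompose each factor via Proposition~\ref{proposition-2.18}, regroup the product as $\prod_j\widetilde{E}_{S_j}\times\prod_j G_{S_j}$, apply Tychonoff's theorem, Proposition~24 of \cite{Ravsky-arxiv1003.5343v5}, and Corollary~\ref{corollary-2.16}, then transfer pseudocompactness through the continuous surjection $\prod_j f_j$. Your extra care in verifying that continuous surjections preserve pseudocompactness in the locally-finite-open-cover sense (rather than via bounded real functions) is a detail the paper leaves implicit, but it is the same argument.
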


\begin{proof}
Since the direct product of a family of semitopological inverse semigroups is a semitopological inverse semigroup, it is sufficient to show that the space $\prod_{j\in\mathscr{J}}S_j$ is feebly compact. For each $j\in\mathscr{J}$ let $\widetilde{E}_{S_j}$, ${G}_{S_j}$, and $f_j\colon \widetilde{E}_{S_j}\times {G}_{S_j}\rightarrow S_j$ be the semilattice, the group and the map, respectively, defined in the proof of Proposition~\ref{proposition-2.18}. Since the space $\prod_{j\in\mathscr{J}}\left(\widetilde{E}_{S_j}\times {G}_{S_j}\right)$ is homeomorphic to the product $\prod_{j\in\mathscr{J}}\widetilde{E}_{S_j}\times \prod_{j\in\mathscr{J}}{G}_{S_j}$ we conclude that by Theorem~3.2.4 from \cite{Engelking1989}, Corollary~\ref{corollary-2.16} and Proposition~24 from \cite{Ravsky-arxiv1003.5343v5} the space $\prod_{j\in\mathscr{J}}\left(\widetilde{E}_{S_j}\times {G}_{S_j}\right)$ is feebly compact. Now, since the map $\prod_{j\in\mathscr{J}}f_j\colon \prod_{j\in\mathscr{J}}\left(\widetilde{E}_{S_j}\times {G}_{S_j}\right)\rightarrow \prod_{j\in\mathscr{J}}S_j$ is continuous  $\prod_{j\in\mathscr{J}}S_j$ is a feebly compact topological space.
\end{proof}

The proofs of the following two propositions are similar to Proposition~\ref{proposition-2.18} and Theorem~\ref{theorem-2.19}; they generalize  Proposition~2.11 and Theorem~2.12 from \cite{GutikPavlyk2013??}.

\begin{proposition}\label{proposition-3.13}
Let $S$ be a primitive inverse topological semigroup. Then $S$ is a continuous (not necessarily homomorphic) image of the product $\widetilde{E}_S\times G_S$, where $\widetilde{E}_S$ is a compact semilattice and $G_S$ is a feebly compact paratopological group provided one of the following conditions holds:
\begin{itemize}
  \item[(1)] $S$ is semiregular feebly compact;
  \item[(2)] $S$ is Hausdorff countably compact.
\end{itemize}
\end{proposition}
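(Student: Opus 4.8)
The plan is to reduce everything to Proposition~\ref{proposition-2.18}. Its hypotheses differ from those of the present statement only in that Proposition~\ref{proposition-2.18} explicitly demands that every maximal subgroup be a pseudocompact paratopological group, whereas here we are instead given that $S$ is a \emph{topological} (rather than merely semitopological) semigroup. So the entire task is to show that joint continuity of the multiplication forces that extra hypothesis, after which Proposition~\ref{proposition-2.18} applies verbatim and yields the desired factorization. First I would note that a topological semigroup is in particular semitopological, so $S$ is a primitive semitopological inverse semigroup; by Theorem~II.4.3 of \cite{Petrich1984} it is an orthogonal sum $\sum_{i\in\mathscr{I}}B_{\lambda_i}(G_i)$ of Brandt semigroups, with structure groups $G_i$.

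Next I would verify that every maximal subgroup of $S$ is a pseudocompact paratopological group. In both cases $S$ is in fact a Hausdorff primitive inverse \emph{pseudocompact} topological semigroup: in case~(1) this is given outright, and in case~(2) it follows from the elementary implication that every countably compact space is pseudocompact. Hence Theorem~\ref{theorem-3.5}$(iii)$ applies and tells us directly that each $G_i$, and thus each maximal subgroup of $S$, is a pseudocompact paratopological group. (A more hands-on route in case~(2) is to observe that each maximal subgroup inherits a jointly continuous multiplication from $S$, so is a paratopological group, and that by Theorem~\ref{theorem-2.11}$(i)$---whose condition~(4) holds because $S$, being topological, makes each $S_{\alpha_i,\alpha_i}$ a topological semigroup---it is a closed, hence countably compact, hence pseudocompact subset of $S$.)

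With every maximal subgroup now known to be a pseudocompact paratopological group, and $S$ itself being semiregular pseudocompact in case~(1) or Hausdorff countably compact in case~(2), the hypotheses of Proposition~\ref{proposition-2.18} are met, and I would invoke it to produce the compact semilattice $\widetilde{E}_S$, the pseudocompact paratopological group $G_S$, and the continuous (not necessarily homomorphic) surjection $f\colon \widetilde{E}_S\times G_S\to S$. The only genuine subtlety---and hence the step I expect to be the crux---is precisely this passage from separate to joint continuity: it is joint continuity (via Theorem~\ref{theorem-3.5}), rather than any new construction, that supplies the maximal-subgroup hypothesis free of charge. Should a self-contained argument be preferred, the construction of $\widetilde{E}_S$, $G_S$ and $f$ from the proof of Proposition~\ref{proposition-2.18} transfers unchanged, now using condition~(4) of Theorem~\ref{theorem-2.12} (that each $S_{\alpha_i,\alpha_i}$ is a topological semigroup) in place of the paratopological-group assumption.
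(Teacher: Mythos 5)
Your proposal is correct and takes essentially the same route as the paper: the paper gives no standalone proof of Proposition~\ref{proposition-3.13}, stating only that it is ``similar to Proposition~\ref{proposition-2.18}'', and the substance of that similarity is exactly what you identified --- joint continuity makes condition (4) of Theorems~\ref{theorem-2.11} and~\ref{theorem-2.12} (equivalently, the hypotheses of Theorem~\ref{theorem-3.5}) hold automatically, so every maximal subgroup is a pseudocompact paratopological group. After that observation, invoking Proposition~\ref{proposition-2.18} (or, as you note, transcribing its construction of $\widetilde{E}_S$, $G_S$ and $f$) settles both cases.
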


\begin{theorem}\label{theorem-3.14}
Let $\{S_i\colon i\in\mathscr{J}\}$ be a family of primitive inverse semiregular feebly compact (Hausdorff countably compact) topological semigroups. Then the direct product $\prod_{j\in\mathscr{J}}S_j$ with the Tychonoff topology is a feebly compact inverse topological semigroup.
\end{theorem}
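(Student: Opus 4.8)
The plan is to mirror the argument used for Theorem~\ref{theorem-2.19}, replacing the appeal to Proposition~\ref{proposition-2.18} by its topological counterpart Proposition~\ref{proposition-3.13}, since the passage from semitopological to topological semigroups costs nothing structurally. First I would observe that the Tychonoff product of a non-empty family of inverse topological semigroups is again an inverse topological semigroup: the coordinatewise multiplication on $\prod_{j\in\mathscr{J}}S_j$ is jointly continuous because each factor carries a jointly continuous multiplication, and the coordinatewise assignment $(x_j)_{j\in\mathscr{J}}\mapsto(x_j^{-1})_{j\in\mathscr{J}}$ exhibits $\prod_{j\in\mathscr{J}}S_j$ as an inverse semigroup. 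Hence the whole content of the theorem reduces to showing that the underlying space is pseudocompact.

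For each $j\in\mathscr{J}$ the semigroup $S_j$ satisfies condition (1) or condition (2) of Proposition~\ref{proposition-3.13}, so that proposition yields a compact semilattice $\widetilde{E}_{S_j}$, a pseudocompact paratopological group $G_{S_j}$, and a continuous surjection $f_j\colon\widetilde{E}_{S_j}\times G_{S_j}\to S_j$. Taking products, the map $\prod_{j\in\mathscr{J}}f_j\colon\prod_{j\in\mathscr{J}}\left(\widetilde{E}_{S_j}\times G_{S_j}\right)\to\prod_{j\in\mathscr{J}}S_j$ is a continuous surjection, so it suffices to prove that its domain is pseudocompact and then to transfer pseudocompactness to the image. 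For the transfer step one checks that the preimages under a continuous surjection of a locally finite family of non-empty open sets again form a locally finite family of non-empty open sets; applying the Hausdorff characterisation of pseudocompactness stated in the introduction to the pseudocompact domain forces that family to be finite, hence the original family is finite and $\prod_{j\in\mathscr{J}}S_j$ is pseudocompact.

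To see that the domain is pseudocompact, I would rearrange coordinates via Theorem~3.2.4 of \cite{Engelking1989} to obtain a homeomorphism
\begin{equation*}
\prod_{j\in\mathscr{J}}\left(\widetilde{E}_{S_j}\times G_{S_j}\right)\;\cong\;\Big(\prod_{j\in\mathscr{J}}\widetilde{E}_{S_j}\Big)\times\Big(\prod_{j\in\mathscr{J}}G_{S_j}\Big).
\end{equation*}
The first factor $\prod_{j\in\mathscr{J}}\widetilde{E}_{S_j}$ is compact by Tychonoff's theorem, while the second factor $\prod_{j\in\mathscr{J}}G_{S_j}$ is a pseudocompact paratopological group by Proposition~24 of \cite{Ravsky-arxiv1003.5343v5}, the generalisation of the Comfort--Ross theorem. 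Corollary~\ref{corollary-2.16} then shows that the product of the pseudocompact space $\prod_{j\in\mathscr{J}}G_{S_j}$ with the compactum $\prod_{j\in\mathscr{J}}\widetilde{E}_{S_j}$ is pseudocompact, which is exactly what is needed.

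The routine verifications are harmless; the real weight of the proof rests on two external inputs already at our disposal, namely the factorisation of each $S_j$ through $\widetilde{E}_{S_j}\times G_{S_j}$ furnished by Proposition~\ref{proposition-3.13} and the pseudocompactness of an arbitrary product of pseudocompact paratopological groups. Granting these, the theorem is essentially an assembly. The only point requiring care is the final transfer step: because the factors are assumed only semiregular pseudocompact or Hausdorff countably compact, and need not be Tychonoff a priori, one cannot invoke the bounded-real-valued-function criterion and must instead argue through locally finite families of non-empty open sets together with the surjectivity of $\prod_{j\in\mathscr{J}}f_j$. I expect this to be the only subtle step.
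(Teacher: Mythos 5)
Your proposal is correct and follows essentially the same route as the paper: the paper gives no separate proof of Theorem~\ref{theorem-3.14}, stating only that it is proved like Theorem~\ref{theorem-2.19} with Proposition~\ref{proposition-3.13} playing the role of Proposition~\ref{proposition-2.18}, which is exactly your argument (reduce to pseudocompactness of the product, factor each $S_j$ through $\widetilde{E}_{S_j}\times G_{S_j}$, rearrange coordinates, invoke Tychonoff, Proposition~24 of \cite{Ravsky-arxiv1003.5343v5} and Corollary~\ref{corollary-2.16}, then push pseudocompactness forward along the continuous surjection $\prod_{j}f_j$). Your explicit verification of the transfer step via locally finite families of non-empty open sets is a point the paper leaves implicit, but it is the same proof.
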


Let a Tychonoff topological space $X$ be a topological sum of subspaces $A$ and $B$, i.e., $X=A\bigoplus B$. It is obvious that every continuous map $f\colon A\rightarrow K$ from $A$ into a compact space $K$ (resp., $f\colon B\rightarrow K$ from $B$ into a compact space $K$) extends to a continuous map $\widehat{f}\colon X\rightarrow K$. This implies the following proposition:

\begin{proposition}\label{proposition-4.1}
If a Tychonoff topological space $X$ is the topological sum of subspaces $A$ and $B$, then $\beta X$ is equivalent to the topological sum $\beta A\bigoplus\beta B$.
\end{proposition}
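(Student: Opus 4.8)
The plan is to verify that the topological sum $\beta A\bigoplus\beta B$ already enjoys the three properties characterising the Stone-\v{C}ech compactification of $X$, and then to appeal to the uniqueness (up to equivalence) of $\beta X$. Put $Z=\beta A\bigoplus\beta B$.

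First I would check that $Z$ is a compact Hausdorff space containing $X=A\bigoplus B$ as a dense subspace. A finite topological sum of compact spaces is compact, and a topological sum of Hausdorff spaces is Hausdorff, so $Z$ is compact Hausdorff. Since $A$ and $B$ are clopen summands of $X$ while $\beta A$ and $\beta B$ are clopen summands of $Z$, the inclusion $X\hookrightarrow Z$ is an embedding; and because $A$ is dense in $\beta A$ and $B$ is dense in $\beta B$, the closure of $X$ in $Z$ is $\operatorname{cl}_{\beta A}(A)\bigoplus\operatorname{cl}_{\beta B}(B)=\beta A\bigoplus\beta B=Z$, so $X$ is dense in $Z$.

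The substantive step is the universal extension property. Let $f\colon X\rightarrow K$ be an arbitrary continuous map into a compact Hausdorff space $K$. Its restrictions $f|_A\colon A\rightarrow K$ and $f|_B\colon B\rightarrow K$ are continuous, so the defining property of $\beta A$ and $\beta B$ furnishes continuous extensions $\overline{f|_A}\colon\beta A\rightarrow K$ and $\overline{f|_B}\colon\beta B\rightarrow K$. Since $\beta A$ and $\beta B$ are disjoint clopen summands of $Z$, a map out of $Z$ is continuous exactly when its restriction to each summand is continuous; hence the two partial extensions glue to a single continuous map $\overline{f}\colon Z\rightarrow K$ agreeing with $f$ on $X$. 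This is precisely the extension promised by the remark immediately preceding the proposition, now realised on the compactification $Z$ in place of $X$.

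Because $Z$ is a compact Hausdorff space containing $X$ densely and admitting a continuous extension of every continuous map from $X$ into a compactum, $Z$ satisfies the defining property of $\beta X$, and by the uniqueness of the Stone-\v{C}ech compactification we conclude that $\beta X$ is equivalent to $Z=\beta A\bigoplus\beta B$. The only point requiring a moment's care---and the nearest thing to an obstacle---is the gluing of the two partial extensions into one continuous map, which is immediate from the clopenness of the summands; everything else reduces to unwinding the universal property.
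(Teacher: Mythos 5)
Your proof is correct. It differs in structure from the paper's argument, which is essentially a one-liner: the paper records (in the remark preceding the proposition) that every continuous map from the clopen summand $A$ (resp.\ $B$) into a compactum extends continuously over all of $X$, and this yields the proposition \emph{internally}: given that remark, every continuous map $A\to K$ extends over $X$ and then over $\beta X$, so $\operatorname{cl}_{\beta X}(A)$ is a compactification of $A$ with the universal extension property and hence is $\beta A$; similarly $\operatorname{cl}_{\beta X}(B)=\beta B$, and since $A$ and $B$ are completely separated in $X$ these two closures are disjoint clopen sets covering $\beta X$. You instead work \emph{externally}: you build the candidate $Z=\beta A\bigoplus\beta B$, verify it is compact Hausdorff, contains $X$ densely, and has the extension property (gluing the extensions $\overline{f|_A}$ and $\overline{f|_B}$ along the clopen decomposition), and then invoke uniqueness of the Stone-\v{C}ech compactification. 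The two routes use dual elementary facts about clopen decompositions --- the paper extends maps \emph{from} a summand outward, you glue maps defined \emph{on} the summands --- and both hinge on the characterization of $\beta X$ by its universal property. Your version has the advantage of being fully self-contained (the paper leaves the passage from the remark to the proposition to the reader), at the cost of explicitly appealing to the uniqueness theorem; the paper's version is shorter and exhibits the decomposition of $\beta X$ directly inside the compactification.
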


The following theorem follows from Corollary~\ref{corollary-3.8} and Theorem~3.2 of \cite{GutikPavlyk2013??}, and it describes the structure of the Stone-\v{C}ech compactification of a primitive inverse feebly compact quasi-regular topological semigroup.

\begin{theorem}\label{theorem-4.2}
Let $S$ be a primitive inverse feebly compact quasi-regular topological semigroup. Then the Stone-\v{C}ech compactification of $S$ admits the structure of a primitive topological inverse semigroup with respect to which the inclusion mapping of $S$ into $\beta{S}$ is a topological isomorphism. Moreover, $\beta{S}$ is topologically isomorphic to the orthogonal sum $\sum_{i\in\mathscr{I}}B_{\lambda_{i}}(\beta G_i)$ of topological Brandt $\lambda_i$-extensions $B_{\lambda_i}(\beta G_i)$ of compact topological groups $\beta G_i$ in the class of topological inverse semigroups for some finite cardinals $\lambda_i\geqslant 1$.
\end{theorem}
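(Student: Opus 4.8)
The plan is to first upgrade $S$ to a topological inverse semigroup, then build the candidate compactification $T=\sum_{i\in\mathscr{I}}B_{\lambda_i}(\beta G_i)$ explicitly and verify that it satisfies the defining property of $\beta S$. First I would invoke Corollary~\ref{corollary-3.8} to conclude that inversion on $S$ is continuous, so $S$ is a primitive inverse pseudocompact \emph{topological inverse} semigroup, and Remark~\ref{remark-3.9} to conclude that its underlying space is Tychonoff, so that $\beta S$ exists. Being a primitive inverse semigroup, $S$ decomposes as an orthogonal sum $\sum_{i\in\mathscr{I}}B_{\lambda_i}(G_i)$ of Brandt semigroups, and Theorem~\ref{theorem-3.5} then gives that every $\lambda_i$ is finite, that each maximal subgroup is open-and-closed and pseudocompact, and that each $G_i$ is a pseudocompact paratopological group. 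Since $S$ is quasi-regular, each $G_i$ is a quasi-regular pseudocompact paratopological group, hence (exactly as in the proof of Theorem~\ref{theorem-3.7}) a pseudocompact \emph{topological} group; moreover Theorem~\ref{theorem-3.7} supplies the base $\mathscr{B}_0$ at the zero of $S$.

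Next I would identify the compactifications of the individual pieces. By the Comfort--Ross Theorem \cite{ComfortRoss1966} each $\beta G_i$ carries a compact topological group structure extending that of $G_i$. Each block $B_{\lambda_i}(G_i)$ is a $0$-simple topological inverse semigroup which is pseudocompact by Corollary~\ref{corollary-2.7}, so by the Stone--\v{C}ech result for $0$-simple pseudocompact topological inverse semigroups \cite{GutikPavlykReiter2011} its Stone--\v{C}ech compactification $\beta\big(B_{\lambda_i}(G_i)\big)$ is topologically isomorphic to the $0$-simple compact topological inverse semigroup $B_{\lambda_i}(\beta G_i)$. Because $\lambda_i$ is finite, $B_{\lambda_i}(\beta G_i)$ is a finite union of copies of the compactum $\beta G_i$ together with a zero, and $B_{\lambda_i}(G_i)$ sits densely inside it.

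I would then define $T=\sum_{i\in\mathscr{I}}B_{\lambda_i}(\beta G_i)$ and topologize it in parallel with $S$: each non-zero cell $(\beta G_i)_{\alpha_i,\beta_i}$ keeps the topology of $\beta G_i$, and the base at the common zero is the family $\mathscr{B}_0^{\beta}$ obtained from $\mathscr{B}_0$ by deleting finitely many whole cells. One verifies directly, as in the description of compact primitive topological inverse semigroups in \cite{BerezovskiGutikPavlyk2010}, that $T$ is a Hausdorff compact primitive topological inverse semigroup: given an open cover, the zero lies in a basic neighbourhood omitting only finitely many cells, each of which is compact, yielding a finite subcover. Density of $G_i$ in $\beta G_i$ together with finiteness of $\lambda_i$ shows that $S\hookrightarrow T$ is a dense topological embedding which is an isomorphism of inverse semigroups onto its image.

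It would then remain to verify the universal property, which I expect to be the main obstacle. Given a continuous $f\colon S\to K$ into a compact Hausdorff $K$, I would extend it on each compact block via the isomorphism $\beta\big(B_{\lambda_i}(G_i)\big)\cong B_{\lambda_i}(\beta G_i)$, obtaining maps $\overline{f_i}\colon B_{\lambda_i}(\beta G_i)\to K$, all sending the zero to $f(0)$; gluing them gives a well-defined $\overline{f}\colon T\to K$ extending $f$. The delicate point is continuity of $\overline{f}$ at zero. For an open $O\ni f(0)$ in $K$ I must find a member of $\mathscr{B}_0^{\beta}$ carried into $O$, that is, show all but finitely many cells map into $O$. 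If infinitely many cells each contained a point $x_n$ with $f(x_n)\notin O$, then, since by the base $\mathscr{B}_0$ any sequence of points taken from distinct cells converges to the zero of $S$, continuity of $f$ at zero would force $f(x_n)\to f(0)\in O$, a contradiction; by density of each $(G_i)_{\alpha_i,\beta_i}$ in $(\beta G_i)_{\alpha_i,\beta_i}$ the same finitely many cells suffice for $\overline{f}$. Hence $\overline{f}$ is continuous and $T$ satisfies the defining property of $\beta S$; by uniqueness of the Stone--\v{C}ech compactification, $\beta S$ is topologically isomorphic to $T=\sum_{i\in\mathscr{I}}B_{\lambda_i}(\beta G_i)$, with the multiplication and inversion transported from $T$, where they were verified directly. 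This also parallels Theorem~3.2 of \cite{GutikPavlyk2013??}, from which the assembly could alternatively be quoted.
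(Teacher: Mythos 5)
Your first step coincides with the paper's: both invoke Corollary~\ref{corollary-3.8} to upgrade $S$ to a pseudocompact primitive \emph{topological inverse} semigroup (and Remark~\ref{remark-3.9} for Tychonoffness, so that $\beta S$ exists). From that point on, however, the paper's entire proof is a citation: it quotes Theorem~3.2 of \cite{GutikPavlyk2013??}, which already asserts that the Stone-\v{C}ech compactification of a pseudocompact primitive topological inverse semigroup is $\sum_{i\in\mathscr{I}}B_{\lambda_i}(\beta G_i)$ with $\lambda_i$ finite. You instead reconstruct that external result from ingredients inside the present paper: Theorem~\ref{theorem-3.5} for finiteness of the $\lambda_i$ and open-and-closedness of the maximal subgroups, Comfort--Ross for the compact group structure on $\beta G_i$, the result of \cite{GutikPavlykReiter2011} (or, since zero is isolated in each block when $\lambda_i$ is finite, simply Proposition~\ref{proposition-4.1}) for the blockwise compactifications, Theorem~\ref{theorem-3.7} for the base at zero, and then a direct verification that $T=\sum_{i\in\mathscr{I}}B_{\lambda_i}(\beta G_i)$ is a compact primitive topological inverse semigroup containing $S$ densely and satisfying the universal property. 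Your route buys self-containedness and makes visible exactly where quasi-regularity enters (through the base $\mathscr{B}_0$ at zero); the paper buys brevity by outsourcing precisely the assembly you carry out by hand. One step of your universal-property argument needs a standard patch: knowing that all but finitely many cells of $S$ map under $f$ into $O$, density only yields that the corresponding cells of $T$ map under $\overline{f}$ into $\operatorname{cl}_K(O)$, not into $O$; since $K$ is compact Hausdorff, hence regular, you should first interpose an open set $O'$ with $f(0)\in O'\subseteq \operatorname{cl}_K(O')\subseteq O$ and run your sequence argument with $O'$ in place of $O$.
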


\begin{theorem}\label{theorem-4.3}
Let $S$ be a regular primitive inverse countably compact semitopological semigroup and $S$ be the orthogonal sum of a family $\{B_{\lambda_{i}}(G_i)\colon i\in\mathscr{I}\}$ of semitopological Brandt semigroups with zeros. Suppose that for every $i\in\mathscr{I}$ there exists a maximal non-zero subgroup $(G_i)_{\alpha_i,\alpha_i}$, $\alpha_i\in\lambda_i$, such that at least the one of the following conditions holds:
\begin{itemize}
  \item[(1)] the group $(G_i)_{\alpha_i,\alpha_i}$ is left precompact;
  \item[(2)] the group $(G_i)_{\alpha_i,\alpha_i}$ is left $\omega$-precompact feebly compact;
  \item[(3)] the semigroup $S_{\alpha_i,\alpha_i}= (G_i)_{\alpha_i,\alpha_i}\cup\{0\}$ is a topological semigroup.
\end{itemize}
Then the Stone-\v{C}ech compactification of $S$ admits the structure of a primitive inverse semitopological semigroup with continuous inversion with respect to which the inclusion mapping of $S$ into $\beta{S}$ is a topological isomorphism. Moreover, $\beta{S}$ is topologically isomorphic to the orthogonal sum $\sum_{i\in\mathscr{I}}B_{\lambda_{i}}(\beta G_i)$ of compact topological Brandt $\lambda_i$-extensions $B_{\lambda_i}(\beta G_i)$ of compact topological groups $\beta G_i$ in the class of semitopological semigroups for some cardinals $\lambda_i\geqslant 1$.
\end{theorem}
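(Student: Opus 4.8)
The plan is to reduce everything to the structure theorem already available for countably compact primitive inverse semitopological semigroups and then to compactify the pieces separately, gluing them back at the zero. First I would observe that since regularity implies Hausdorffness and conditions (1)--(3) of the present theorem are precisely conditions (1), (3), (4) of Theorem~\ref{theorem-2.11}, that theorem applies. It yields that every maximal subgroup of $S$ is closed and countably compact, that each maximal Brandt subsemigroup $B_{\lambda_i}(G_i)$ is a countably compact topological Brandt $\lambda_i$-extension of a countably compact semitopological group $G_i$, that every sheet $(G_i)_{\alpha_i,\beta_i}$ is open and homeomorphic to $G_i$, and that $\mathscr{B}_0$ is a base of the topology at zero. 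Thus the topology of $S$ is completely described, and by Corollary~\ref{corollary-2.5} every neighbourhood of $0$ omits only finitely many sheets.

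Next I would prove that for each $i$ the group $G_i$ is a pseudocompact topological group, so that by the Comfort--Ross theorem \cite{ComfortRoss1966} its Stone--\v{C}ech compactification $\beta G_i$ is a compact topological group into which $G_i$ embeds as a dense topological subgroup. Under condition (3), $G_i$ is a countably compact (hence pseudocompact) topological semigroup that is algebraically a group, i.e.\ a paratopological group; being a dense, open, $G_\delta$-subset of the regular pseudocompact space $G_i$ itself, it is a topological group by Lemma~\ref{ArkRezReg}. Under conditions (1) and (2) only separate continuity is available, and the task is to upgrade the separately continuous multiplication on the countably compact, (left $\omega$-)precompact semitopological group $G_i$ to a jointly continuous one with continuous inversion; the precompactness supplied by Lemma~\ref{TBGr} together with the countable compactness is the lever. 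Once each $G_i$ is a topological group, each sheet is Tychonoff and hence, by Proposition~\ref{proposition-2.20}, so is $S$, so that $\beta S$ is defined.

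Then I would assemble the candidate $T=\sum_{i\in\mathscr{I}}B_{\lambda_i}(\beta G_i)$, topologised exactly as in Theorem~\ref{theorem-2.11}$(iii)$--$(iv)$: each sheet $(\beta G_i)_{\alpha_i,\beta_i}$ is an open copy of the compactum $\beta G_i$ and the base at zero is the $\mathscr{B}_0$-analogue of complements of finitely many sheets. As in Example~\ref{example-2.10}, this makes every $B_{\lambda_i}(\beta G_i)$ compact and the whole orthogonal sum compact Hausdorff; the translation formulas of Theorem~\ref{theorem-2.11}$(iii)$ show the operation is separately continuous and, because each $\beta G_i$ is a topological group, inversion is continuous. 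Since each $G_i$ is dense in $\beta G_i$, the space $S$ embeds densely in $T$. It remains to verify the universal property: given a continuous $f\colon S\to K$ into a compactum, I would restrict $f$ to each sheet $(G_i)_{\alpha_i,\beta_i}\cong G_i$ and extend it over $\beta G_i$, using the clopen decomposition of $S\setminus\{0\}$ and Proposition~\ref{proposition-4.1} to organise the extension away from zero; the countable compactness of $S$ and the cofinite description of the neighbourhoods of $0$ (Corollary~\ref{corollary-2.5}) then force these extensions to converge to $f(0)$ along the sheets accumulating at zero, producing a continuous $\overline{f}\colon T\to K$. This identifies $T$ with $\beta S$ and shows the inclusion $S\hookrightarrow\beta S$ is a topological isomorphism onto a sub-semitopological-semigroup, giving $\beta S\cong\sum_{i\in\mathscr{I}}B_{\lambda_i}(\beta G_i)$ with the asserted properties.

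The main obstacle is the group step under conditions (1) and (2): passing from a merely separately continuous, countably compact, (left $\omega$-)precompact group to a genuine topological group whose Stone--\v{C}ech compactification is a compact topological group, since separate continuity by itself does not deliver a paratopological group and Lemma~\ref{ArkRezReg} cannot be invoked verbatim. A secondary difficulty is the verification of the universal property at the single accumulation point $0$: one must ensure that the separate extensions over the (possibly uncountably many) sheets glue into a function that is genuinely continuous at zero, which is exactly the point where the countable compactness of $S$, rather than mere pseudocompactness, becomes indispensable.
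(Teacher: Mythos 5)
Your proposal stalls exactly where you say it does, and that gap is real: under conditions (1) and (2) you have no way to turn the countably compact \emph{semitopological} group $G_i$ into a topological group, and without that you cannot invoke Comfort--Ross to make $\beta G_i$ a compact topological group, so your candidate $\sum_{i\in\mathscr{I}}B_{\lambda_i}(\beta G_i)$ never gets off the ground. The idea you are missing is that the regularity hypothesis on $S$ is there precisely for this step; you use regularity only to get Hausdorffness (and, implicitly, inside your case (3) argument). In the paper's proof, once Theorem~\ref{theorem-2.11} has been applied (your observation that conditions (1)--(3) here are conditions (1), (3), (4) there is correct, and this is the only place the conditions are used), every non-zero $\mathscr{H}$-class $(G_i)_{\alpha_i,\beta_i}$ is an open-and-closed subspace of the regular countably compact space $S$, hence each maximal subgroup is itself a regular (in particular quasi-regular) countably compact semitopological group; the paper then concludes from \cite[Proposition~3]{Ravsky-arxiv1003.5343v5} (see also \cite{RavskyReznichenko2002}) that every maximal subgroup of $S$ is a topological group --- uniformly in all cases, with no case analysis at all. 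Your ``lever'' of precompactness plus countable compactness alone is not known to suffice (pseudocompact semitopological groups need not be topological groups), so this step cannot be finessed; it is regularity plus the cited Ravsky--Reznichenko result that closes it. After that, Proposition~\ref{proposition-2.20} gives Tychonoffness of $S$ and the existence of $\beta S$, as in your outline.

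For the second half your route genuinely differs from the paper's. The paper does not verify the universal property of $\beta S$ by hand: it applies Theorem~\ref{theorem-2.19} to get that $S\times S$ is pseudocompact, Glicksberg's theorem \cite{Glicksberg1959} to get that $\beta(S\times S)$ is $\beta S\times\beta S$, and Theorem~1.1 of \cite{BanakhDimitrova2010} to extend the separately continuous multiplication to $\beta S$, making $\beta S$ a compact semitopological semigroup containing $S$; then Proposition~\ref{proposition-4.1} identifies $\operatorname{cl}_{\beta S}\bigl((G_i)_{k,l}\bigr)$ with $\beta G_i$, and a short compactness argument shows that no point of $\beta S$ lies outside $\{0\}$ together with these closures, i.e.\ $\beta S=\sum_{i\in\mathscr{I}}B_{\lambda_i}(\beta G_i)$. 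Your alternative --- build $T=\sum_{i\in\mathscr{I}}B_{\lambda_i}(\beta G_i)$ directly and check the extension property --- can be made to work, and the ``secondary difficulty'' you flag is not serious: given continuous $f\colon S\to K$ and a neighbourhood $W'$ of $f(0)$, pick $W$ with $\operatorname{cl}_K(W)\subseteq W'$ (the compactum $K$ is regular) and $U\in\mathscr{B}_0$ with $f(U\cap S)\subseteq W$; the sheet-by-sheet extensions then map the corresponding cofinite union of sheet closures into $\operatorname{cl}_K(W)\subseteq W'$, so continuity at zero is automatic from the base $\mathscr{B}_0$, with no further use of countable compactness. This buys you independence from the Glicksberg and Banakh--Dimitrova citations at the cost of these verifications, but it does nothing toward the real missing ingredient, which remains the regularity/Ravsky--Reznichenko step above.
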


\begin{proof}
By Theorem~\ref{theorem-2.11}, the semigroup $S$ is topologically isomorphic to the orthogonal sum $\sum_{i\in\mathscr{I}}B_{\lambda_{i}}(G_i)$ of topological Brandt $\lambda_i$-extensions $B_{\lambda_i}(G_i)$ of countably compact paratopological groups $G_i$ in the class of semitopological semigroups for some cardinals $\lambda_i\geqslant 1$, such that any non-zero $\mathscr{H}$-class of $S$ is an open-and-closed subset of $S$, and the family $\mathscr{B}(0)$ defined by formula (\ref{eq-2.1}) in Theorem~\ref{theorem-2.12}$(iv)$  determines a base of the topology at zero $0$ of $S$. Since the space $S$ regular and any non-zero $\mathscr{H}$-class of $S$ is an open-and-closed subset of $S$, every maximal subgroup of $S$ is a topological group \cite[Proposition~3]{Ravsky-arxiv1003.5343v5}. Hence $S$ is topologically isomorphic to the orthogonal sum $\sum_{i\in\mathscr{I}}B_{\lambda_{i}}(G_i)$ of topological Brandt $\lambda_i$-extensions $B_{\lambda_i}(G_i)$ of countably compact topological groups $G_i$ in the class of semitopological semigroups for some cardinals $\lambda_i\geqslant 1$. Then by Proposition~\ref{proposition-2.20} the semigroup $S$ is Tychonoff, and hence the Stone-\v{C}ech compactification of $S$ exists.

By Theorem~\ref{theorem-2.19}, $S\times S$ is a pseudocompact topological space. Now by Theorem~1 of \cite{Glicksberg1959}, we have that $\beta(S\times S)$ is equivalent to $\beta S\times \beta S$, and hence by Theorem~1.1 of~\cite{BanakhDimitrova2010}, $S$ is a subsemigroup of the compact semitopological semigroup $\beta S$.

By Proposition~\ref{proposition-4.1} for every non-zero $\mathscr{H}$-class $(G_i)_{k,l}$, $k,l\in\lambda_i$, we have that $\operatorname{cl}_{\beta S}((G_i)_{k,l})$ is equivalent to $\beta(G_i)_{k,l}$, and hence it is equivalent to $\beta G_i$. Therefore we may naturally consider the space $\sum_{i\in\mathscr{I}}B_{\lambda_{i}}(\beta G_i)$ as a subspace of the space $\beta S$. Suppose that $\sum_{i\in\mathscr{I}}B_{\lambda_{i}}(\beta G_i)\neq \beta S$. We fix an arbitrary $x\in\beta S\setminus \sum_{i\in\mathscr{I}}B_{\lambda_{i}}(\beta G_i)$. Then Hausdorffness of $\beta S$ implies that there exist open neighbourhoods $V(x)$ and $V(0)$ of the points $x$ and the zero $0$ in $\beta S$, respectively, and there exist finitely many indices $i_1,\ldots,i_k\in\mathscr{I}$ and finitely many pairs of indices $(\alpha_{i_1},\beta_{i_1}),\ldots,(\alpha_{i_k},\beta_{i_k})$ such that $V(0)\cap\beta S\supseteq S\setminus  \big((G_{i_1})_{\alpha_{i_1},\beta_{i_1}}\cup \cdots\cup (G_{i_k})_{\alpha_{i_k},\beta_{i_k}}\big)$. Then we have that
\begin{equation*}
V(x)\cap S \subseteq \big((G_{i_1})_{\alpha_{i_1},\beta_{i_1}}\cup \cdots\cup (G_{i_k})_{\alpha_{i_k},\beta_{i_k}}\big)\subseteq \big((\beta G_{i_1})_{\alpha_{i_1},\beta_{i_1}}\cup \cdots\cup (\beta G_{i_k})_{\alpha_{i_k},\beta_{i_k}}\big).
\end{equation*}
But this contradicts that $x$ is an accumulation point of $\sum_{i\in\mathscr{I}}B_{\lambda_{i}}(\beta G_i)$ in $\beta S$, which does not belong to  $\sum_{i\in\mathscr{I}}B_{\lambda_{i}}(\beta G_i)$, because
$(\beta G_{i_1})_{\alpha_{i_1},\beta_{i_1}}\cup \cdots\cup (\beta G_{i_k})_{\alpha_{i_k},\beta_{i_k}}$ is a compact subset of $\beta S$.
\end{proof}

Recall~\cite{DeLeeuwGlicksberg1961} that the \emph{Bohr compactification} of a semitopological semigroup $S$ is a~pair $(\textsf{b}, \mathbb{B}(S))$ such that $\mathbb{B}(S)$ is a compact semitopological semigroup, $\textsf{b}\colon S\to \mathbb{B}(S)$ is a continuous homomorphism, and if $g\colon S\to T$ is a continuous homomorphism of $S$ into a compact semitopological semigroup $T$, then there exists a unique continuous homomorphism $f\colon \mathbb{B}(S)\to T$ such that the diagram
\begin{equation*}
\xymatrix{ S\ar[rr]^{\textsf{b}}\ar[dr]_g && \mathbb{B}(S)\ar[ld]^f\\
& T &}
\end{equation*}
commutes. In the sequel, similar to that in General Topology, by the Bohr compactification of a semitopological semigroup $S$ we shall mean not only pair $(\textsf{b},\mathbb{B}(S))$ but also the compact semitopological semigroup $\mathbb{B}(S)$.

The definitions of the Stone-\v{C}ech compactification and the Bohr compactification, and Theorem~\ref{theorem-4.3} imply the following corollary:

\begin{corollary}\label{corollary-4.4}
Let $S$ be a Hausdorff primitive inverse countably compact semitopological semigroup such that every maximal subgroup of $S$ is a pseudocompact topological group and $S$ be the orthogonal sum of a family $\{B_{\lambda_{i}}(G_i)\colon i\in\mathscr{I}\}$ of semitopological Brandt semigroups with zeros. Then the Bohr compactification of $S$ admits the structure of a primitive inverse semitopological semigroup with continuous inversion with respect to which the inclusion mapping of $S$ into $(\textsf{b},\mathbb{B}(S))$ is a topological isomorphism. Moreover, $(\textsf{b},\mathbb{B}(S))$ is topologically isomorphic to the orthogonal sum $\sum_{i\in\mathscr{I}}B_{\lambda_{i}}(\beta G_i)$ of topological Brandt $\lambda_i$-extensions $B_{\lambda_i}(\beta G_i)$ of compact topological groups $\beta G_i$ in the class of semitopological semigroups for some cardinals $\lambda_i\geqslant 1$.
\end{corollary}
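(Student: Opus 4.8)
The plan is to identify the Bohr compactification of $S$ with its Stone-\v{C}ech compactification, whose semigroup structure has already been determined in Theorem~\ref{theorem-4.3}. First I would check that the hypotheses of Theorem~\ref{theorem-4.3} are met. Since every maximal subgroup $G_i$ is a pseudocompact topological group, it is precompact: by the Comfort--Ross Theorem its Stone-\v{C}ech compactification is a compact topological group containing $G_i$ as a dense subgroup, so $G_i$ is a subgroup of a compact group and hence left precompact, which is exactly condition~(1) of Theorem~\ref{theorem-4.3}. Moreover each $G_i$, being a topological group, is Tychonoff, and Lemma~\ref{lemma-2.1} shows that $0$ is isolated in each $S_{\alpha_i,\alpha_i}=(G_i)_{\alpha_i,\alpha_i}\cup\{0\}$; thus each $S_{\alpha_i,\alpha_i}$ is the topological sum of the regular (Tychonoff) group $(G_i)_{\alpha_i,\alpha_i}$ and an isolated point, so it is regular and Tychonoff. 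Viewing $S$ as $\sum_{i\in\mathscr{I}}B^0_{\lambda_i}(S_{\alpha_i,\alpha_i})$, Proposition~\ref{proposition-2.20} then yields that $S$ is regular (and Tychonoff). Consequently Theorem~\ref{theorem-4.3} applies and shows that $\beta S$ carries the structure of a compact primitive inverse semitopological semigroup with continuous inversion, topologically isomorphic to $\sum_{i\in\mathscr{I}}B_{\lambda_{i}}(\beta G_i)$, into which $S$ embeds as a dense subsemigroup via a topological isomorphism.

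It then remains to verify that $(\iota,\beta S)$, where $\iota\colon S\hookrightarrow\beta S$ denotes the inclusion, satisfies the universal property defining the Bohr compactification; the structural description of $\mathbb{B}(S)$ will follow from the essential uniqueness of the universal object together with Theorem~\ref{theorem-4.3}. Here $\iota$ is a continuous homomorphism of $S$ into the compact semitopological semigroup $\beta S$. Given an arbitrary continuous homomorphism $g\colon S\to T$ into a compact (necessarily Hausdorff) semitopological semigroup $T$, I would invoke the Stone-\v{C}ech property, available because $S$ is Tychonoff and dense in $\beta S$ and $T$ is compact Hausdorff, to extend $g$ to a continuous map $\overline{g}\colon\beta S\to T$ with $\overline{g}\circ\iota=g$, and then argue that $\overline{g}$ is automatically a homomorphism.

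The hard part will be proving that $\overline{g}$ is multiplicative, and the device is a two-step density argument exploiting separate continuity. Fixing $x\in S$, both maps $y\mapsto\overline{g}(xy)$ and $y\mapsto\overline{g}(x)\,\overline{g}(y)$ are continuous on $\beta S$ (the first because left translation by $x$ is continuous in the semitopological semigroup $\beta S$ followed by the continuous $\overline{g}$, the second because $\overline{g}$ is continuous and left translation by $\overline{g}(x)$ is continuous in the semitopological semigroup $T$), and they agree on the dense subset $S$ since $g$ is a homomorphism; Hausdorffness of $T$ forces them to coincide on all of $\beta S$. Fixing now $y\in\beta S$ and repeating the argument with right translations, I obtain $\overline{g}(xy)=\overline{g}(x)\,\overline{g}(y)$ for all $x,y\in\beta S$, so $\overline{g}$ is a continuous homomorphism; its uniqueness among continuous homomorphisms factoring $g$ is immediate from the density of $S$ in $\beta S$ and the Hausdorffness of $T$. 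Hence $(\iota,\beta S)$ is a Bohr compactification of $S$, and by uniqueness of the Bohr compactification together with the description of $\beta S$ from Theorem~\ref{theorem-4.3}, the pair $(\textsf{b},\mathbb{B}(S))$ is topologically isomorphic to $\sum_{i\in\mathscr{I}}B_{\lambda_{i}}(\beta G_i)$, the inclusion of $S$ into $\mathbb{B}(S)$ is a topological isomorphism, and the inversion is continuous, both inherited from Theorem~\ref{theorem-4.3}.
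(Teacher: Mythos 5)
Your proposal is correct and follows essentially the same route as the paper, which simply notes that the corollary follows from Theorem~\ref{theorem-4.3} together with the definitions of the Stone-\v{C}ech and Bohr compactifications: one identifies $\beta S$, equipped with the semigroup structure from Theorem~\ref{theorem-4.3}, as the Bohr compactification via the Stone-\v{C}ech extension property and the standard two-step density argument for separately continuous multiplication. Your explicit verification that $S$ is regular (and Tychonoff) --- via precompactness of the pseudocompact topological groups $G_i$, Lemma~\ref{lemma-2.1}, and Proposition~\ref{proposition-2.20} --- usefully fills in a step the paper leaves implicit, since Theorem~\ref{theorem-4.3} assumes regularity while the corollary assumes only Hausdorffness.
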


\section*{Acknowledgements}

The authors are grateful to the referee for useful comments and suggestions.


\end{document}